\newtheorem{theorem}{Theorem}
\newtheorem{definition}[theorem]{Definition}
\newtheorem{lemma}[theorem]{Lemma}
\newtheorem{remark}[theorem]{Remark}
\newtheorem{Notation}[theorem]{Notation}
\newcommand{\nc}{\newcommand}
\nc{\R}{\mathbb{R}}
\nc{\C}{\mathbb{C}}
\nc{\mrm}{\mathrm}
\nc{\mL}{\mrm{L}}
\nc{\mF}{\mrm{F}}
\nc{\mC}{\mrm{C}}
\nc{\mH}{\mrm{H}}
\nc{\mW}{\mrm{W}}
\nc{\mV}{\mrm{V}}
\nc{\mM}{\mrm{M}}
\nc{\mK}{\mrm{K}}
\nc{\mD}{\mrm{D}}
\nc{\mB}{\mrm{B}}
\nc{\mR}{\mrm{R}}
\nc{\mX}{\mrm{X}}
\nc{\mY}{\mrm{Y}}
\nc{\mS}{\mrm{S}}
\nc{\Ec}{\mrm{E_c}}
\nc{\calL}{\mathcal{L}}
\nc{\loc}{\mrm{loc}}
\nc{\comp}{c}
\nc{\supp}{\mrm{supp}}
\nc{\Hardy}{\mathfrak{H}}
\nc{\calH}{\mathcal{H}}
\nc{\ctru}{\mathfrak{u}}
\nc{\ctrv}{\mathfrak{v}}
\nc{\bc}{\boldsymbol{c}}
\nc{\be}{\boldsymbol{e}}
\nc{\br}{\boldsymbol{r}}
\nc{\bs}{\boldsymbol{s}}
\nc{\bt}{\boldsymbol{t}}
\nc{\bw}{\boldsymbol{w}}
\nc{\bx}{\boldsymbol{x}}
\nc{\by}{\boldsymbol{y}}
\nc{\bz}{\boldsymbol{z}}
\nc{\lbr}{\lbrack}
\nc{\rbr}{\rbrack}
\nc{\dsp}{\displaystyle}
\nc{\vphi}{\varphi}
\begin{document}
	\title[Additive two-level DMRG]{An additive two-level parallel variant of the DMRG algorithm with coarse-space correction}
    \author{Laura Grigori}\address{(L. Grigori) PSI Center for Scientific Computing, Theory, and Data,
5232 Villigen PSI, Switzerland and Institute of Mathematics, Ecole Polytechnique Fédérale de Lausanne (EPFL), 1015 Lausanne, Switzerland.}
	\author{Muhammad Hassan}\address{(M. Hassan) Chair of Analysis, School of Computation, Information and Technology, Technische Universität München. This work was done while the author was affiliated with the PSI Center for Scientific Computing, Theory, and Data, 5232 Villigen PSI, Switzerland, and prior to that, the Institute of Mathematics, Ecole Polytechnique Fédérale de Lausanne, Switzerland. }\email{muhammad.hassan@cit.tum.de}

    \begin{abstract}
  The density matrix renormalization group (DMRG) algorithm is a popular alternating minimization scheme for solving high-dimensional optimization problems in the tensor train format. Classical DMRG, however, is based on sequential minimization, which raises challenges in its implementation on  parallel computing architectures. To overcome this, we propose a novel additive two-level DMRG algorithm that combines independent, local minimization steps with a global update step using a subsequent coarse-space minimization. Our proposed algorithm, which is directly inspired by additive Schwarz methods from the domain decomposition literature, is particularly amenable to implementation on parallel, distributed architectures since both the local minimization steps and the construction of the coarse-space can be performed in parallel. Numerical experiments on strongly correlated molecular systems demonstrate that the method achieves competitive convergence rates while achieving significant parallel speedups.
\end{abstract}

	\subjclass{15A69, 65K10, 65N25, 90C06}
	\keywords{Density matrix renormalization group (DMRG), tensor trains decomposition, high-dimensional optimization, electronic structure calculations, parallel algorithms}
    
	\maketitle

\section{Introduction}\label{sec:1}
The past two decades have seen the emergence of the tensor train format as a powerful tool for solving high-dimensional optimization problems. Originally introduced by Steven White as a tool to solve energy minimization problems in quantum physics \cite{white1992density}, the tensor train (TT) format now has a wide variety of applications in computational science. This includes the compression of weight tensors in deep convolutional neural networks \cite{novikov2015tensorizing, tjandra2017compressing}, the numerical resolution of the so-called chemical master equation that describes chemical reaction dynamics \cite{kazeev2014direct, dolgov2015simultaneous}, low-rank approximation in image segmentation and related computer vision problems \cite{chen2020tensor, bengua2017efficient}, the time-dependent Schrödinger equation in computational quantum mechanics \cite{lubich2015time, haegeman2016unifying}, and distribution sampling in stochastic queuing models \cite{kressner2014low_markov, gelss2017nearest}. Very recently, the so-called quantized TT format has also been used to solve multi-scale PDEs using the variational principle (see, e.g., \cite{khoromskij2011d, dolgov2012fast, kazeev2022quantized}).

The essential idea of using a data-sparse tensor format to approximate high-dimensional optimization problems is certainly not new. A naive implementation of this idea, nevertheless, runs into computational issues since the most basic tensor formats, namely, the canonical polyadic (CP) decomposition and the Tucker decomposition each have their own drawbacks. Indeed, the set of CP-format tensors with rank bounded by $r \in \mathbb{N}$ is not closed, meaning that the best approximation problem in the CP format is ill-posed (see, e.g., \cite[Section 3]{kolda2009tensor}). While the Tucker format does not suffer from this ill-posedness problem (see, e.g., \cite[Section 4]{kolda2009tensor} and \cite{koch2010dynamical}), the storage cost of an order-$d$ tensor in Tucker format scales exponentially in $d$ so that the curse of dimensionality remains an issue in applications where $d$ is very large. As we explain in more detail in Section \ref{sec:2}, the tensor train format offers-- in some sense-- the best of both worlds by combining the favorable approximability properties enjoyed by the Tucker format with a linear scaling (in $d$) storage cost.

While the TT format has seen extensive use in the computational physics literature (under the name matrix product states) since the 2000's, the systematic study of the TT format from a numerical analysis perspective is a relatively recent development. Some of the earliest mathematical works on the tensor train format are the seminal contributions of Oseledets and Tyrtyshnikov \cite{oseledets2009compact, oseledets2009breaking, oseledets2009new, oseledets2011tensor}. About the same time, Hackbusch and Kühn \cite{hackbusch2009new} proposed a hierarchical generalization of the Tucker format. This, so-called  Hierarchical Tucker format (HT) also combined the excellent approximability properties of the Tucker format with a linear scaling (in $d$) storage cost \cite{ballani2013black, grasedyck2011introduction, uschmajew2013geometry}, and it was soon realized that the TT format was simply a powerful special case of the HT format. A fundamental feature of both the TT and the HT format is that they decompose a given higher-order tensor in terms of lower-order tensor components using generalizations of the classical matrix SVD (the TT-SVD algorithm for the former \cite{oseledets2011tensor} and the hierarchical SVD for the latter \cite{grasedyck2010hierarchical}). 

A geometrical study of the the TT format was first initiated by Holtz, Rohwedder and Schneider \cite{holtz2012manifolds} who demonstrated, in particular, that the set of order-$d$ tensors with fixed TT ranks (defined in Section \ref{sec:2} below) is an embedded sub-manifold. Subsequent work by Uschmajew and Vandereycken \cite{uschmajew2013geometry} examined in further detail the geometric structure of this set and obtained similar results for the HT format. An important consequence of this differential geometric structure is that it unlocks the use of so-called Riemannian optimization techniques to solve optimization problems in the HT and TT formats (see, e.g., \cite{steinlechner2016riemannian} as well as \cite{lubich2013dynamical} for time-dependent problems). 

An alternative strategy for solving optimization problems in the tensor train or Hierarchical Tucker format is the so-called alternating minimization algorithm. As we explain in Section \ref{sec:2.2}, these algorithms entirely ignore the manifold structure of the TT format and the HT format and rely instead on sequential minimization of the lower-order tensor components that form the TT or HT decomposition. The earliest examples of such an alternating scheme are the famous one-site and two-site density matrix renormalisation group (DMRG) algorithm \cite{white1992density} proposed in the quantum physics literature for solving Rayleigh-quotient minimization problems. In the mathematics literature, the so-called alternating least squares (ALS) and modified alternating least squares (MALS) algorithms \cite{holtz2012alternating} have been proposed as generalizations of DMRG for solving optimization problems beyond quotient minimization. Additional variants of DMRG include the alternating minimal energy (AMEn) method of Dolgov and Savostyanov \cite{dolgov2014alternating} and the extension EVAMEn proposed by Kressner, Steinlechner and Uschmajew \cite{kressner2014low}.


An essential feature of the alternating minimization algorithms listed above is the sequential nature of the lower-order tensor component minimization. Indeed, it is precisely this sequential nature of the local minimization that allows the interpretation of the ALS algorithm as a non-linear Gauss-Seidel relaxation. Thus, in the case of the TT decomposition where an order-$d$ tensor is decomposed in terms of $d$, order-three tensors, we must perform $d$ (or $d-1$ in the case of MALS and two-site DMRG) local minimization and update steps, \emph{one-after-the-other} in order to complete a global update. Given the difficulty of implementing sequential algorithms of this nature on parallel computing architectures, it is natural to ask if one can develop a non-sequential variant of DMRG in which the local minimization steps are performed independently of each other in parallel, and a global update is obtained by combining all these local updates in a single step. Note that this strategy can be seen as a two-step domain decomposition method applied to the tensor train (or HT) format.

The aim of the present contribution is to develop this line of reasoning by proposing an additive two-level DMRG algorithm that involves independent, local minimization steps, enriched by a second level coarse space correction. Our proposed algorithm, which is directly inspired by additive Schwarz methods from the domain decomposition literature (see {Remark \ref{rem:dd} below}), is well-suited for implementation on distributed architectures since both the local minimization steps and the coarse space construction can be performed in parallel. To our knowledge, the present proposal is the first attempt at a non-sequential variant of DMRG with a coarse-space minimization step; the only prior proposal for non-sequential DMRG that we are familiar with is due to Stoudenmire and White \cite{stoudenmire2013real}, which does not contain coarse-space correction. 

The remainder of this article is organized as follows. In the upcoming Sections \ref{sec:2.1} and \ref{sec:2.2}, we introduce the tensor train format and the classical one-site and two-site DMRG algorithms respectively. In Section \ref{sec:3}, we state our proposed additive two-level DMRG algorithm with coarse space correction, and in Section \ref{sec:3.1}, we describe, in detail, the coarse-step minimization step. Finally, as a first test of the performance of our proposed algorithm, we compute the ground-state (lowest) energy levels of some \textit{strongly-correlated} molecular systems from quantum chemistry.  {Strongly correlated molecules possess ground state eigenfunctions that are poorly approximated using a single Slater determinants \cite{ganoe2024notion} (as in Hartree-Fock or DFT approaches), and are therefore prime candidates for the application of DMRG.} These numerical experiments are the subject of Section \ref{sec:4}. Further computational aspects of the algorithm including a cost comparison with classical DMRG are given in the supplementary material. 

\section{Problem Formulation and Setting}\label{sec:2}

\subsection{Basic Framework}\label{sec:2.1}

Let $d \in \mathbb{N}$ and $\{n_j\}_{j=1}^d \subset \mathbb{N}^{d}$ be given. We denote by $\mathbb{R}^{n_1 \times n_2 \times \ldots n_d}$, the space of order $d$ tensors, and by $U \subset  \mathbb{R}^{n_1 \times n_2 \times \ldots n_d}$ some open set. Following standard convention, elements of the tensor space $\mathbb{R}^{n_1 \times n_2 \times \ldots n_d}$ will be denoted as $\bold{X}, \bold{Y}$, or $\bold{Z}$, etc, and we will assume, throughout this article, that all Euclidean spaces including $\mathbb{R}^{n_1 \times n_2 \times \ldots n_d}$ are equipped with the Frobenius inner product $\langle \cdot, \cdot \rangle$ and corresponding Frobenius norm $\Vert \cdot \Vert$. 

{We assume that we are given a twice continuously differentiable energy functional $\mathcal{J} \colon U \subset \mathbb{R}^{n_1 \times n_2 \times \ldots n_d} \rightarrow \mathbb{R}$ that possesses a (global) minimum in $U$,} and we are interested in the solution to the minimization problem
\begin{align}\label{eq:1}
	{E}^*:= \underset{\bold{X} \in U}{\min}\; \mathcal{J}(\bold{X}).
\end{align}
In the present study, we will particularly focus on the case when $U=\mathbb{R}^{n_1 \times n_2 \times \ldots n_d}\setminus \{0\}$ and the energy functional $\mathcal{J}\colon U \rightarrow \mathbb{R}$ takes the form
\begin{align}\label{eq:energy_eig}
	\forall \bold{X}\in  U=\mathbb{R}^{n_1 \times n_2 \times \ldots n_d}\setminus \{0\}\colon \qquad        \mathcal{J}(\bold{X})= \frac{\langle \bold{X}, \bold{A}\bold{X}\rangle}{\Vert \bold{X}\Vert^2},
\end{align}
where $\bold{A}\colon \mathbb{R}^{n_1 \times n_2 \times \ldots n_d} \rightarrow \mathbb{R}^{n_1 \times n_2 \times \ldots n_d}$ is a symmetric operator with a simple lowest eigenvalue. It is well-known that minimizers of the energy functional \eqref{eq:energy_eig} correspond to the lowest eigenfunction of the operator $\bold{A}$. Our motivation for focusing on an energy functional of this specific form therefore stems from our interest in solving high-dimensional eigenvalue problems, particularly those arising from quantum chemistry. 

Let us nevertheless point out that our proposals can easily be adapted to the case when $U=\mathbb{R}^{n_1 \times n_2 \times \ldots n_d}$ and the energy functional $\mathcal{J}\colon U \rightarrow \mathbb{R}$ takes the form
\begin{align}\label{eq:energy_lin}
	\forall \bold{X}\in  U=\mathbb{R}^{n_1 \times n_2 \times \ldots n_d}\colon \qquad        \mathcal{J}(\bold{X})= \frac{1}{2} \langle \bold{X}, \bold{A}\bold{X}\rangle- \langle \bold{F}, \bold{X}\rangle,
\end{align}
where $\bold{A}\colon \mathbb{R}^{n_1 \times n_2 \times \ldots n_d} \rightarrow \mathbb{R}^{n_1 \times n_2 \times \ldots n_d}$ is a symmetric, positive definite operator and $\bold{F} \in \mathbb{R}^{n_1 \times n_2 \times \ldots n_d}$. In this case, the minimizer of the energy functional \eqref{eq:energy_lin} corresponds to the solution of the linear operator equation $\bold{A}\bold{X}=\bold{F}$, such equations arising naturally when discretizing high-dimensional, symmetric, elliptic source problems.

In many applications-- in particular those arising from quantum chemistry-- the order $d \in \mathbb{N}$ of the tensor space is extremely large which results in the minimization problem \eqref{eq:1} being computationally intractable. A natural strategy to deal with this curse of dimensionality is to introduce a low-rank subset of $\mathbb{R}^{n_1 \times n_2 \times \ldots n_d}$ and resolve the minimization problem \eqref{eq:1} on the intersection of this low-rank subset with $U$. The tensor train format (or more generally the Hierarchical Tucker format \cite{hackbusch2009new}) allows for the possibility of defining such a low-rank subset. To explain this construction, we next introduce the notion of the separation ranks of a tensor introduced in \cite{holtz2012manifolds}.

\begin{Notation}[Canonical Unfoldings of a Tensor]
	Let $\{n_j\}_{j=1}^k \subset \mathbb{N}$ and let $\bold{X} \in \mathbb{R}^{n_1 \times n_2 \times \ldots n_k}$ be a tensor of order $k$. For any $j \in \{1, \ldots, k-1\}$, we write $\bold{X}^{<j>} \in \mathbb{R}^{n_1\ldots n_j \times n_{j+1}\ldots n_{k}}$ to denote the $j^{\rm th}$ canonical unfolding of $\bold{U}$. Note that this is a matricization of $\bold{X}$ with $\prod_{\ell=1}^j n_j$ rows and $\prod_{\ell=j+1}^k n_j$ columns. By convention, $\bold{X}^{<d>} \in \mathbb{R}^{n_1\ldots n_d \times 1}$ denotes the vectorization of $\bold{U}$.
\end{Notation}

\begin{definition}[Separation rank of a tensor]\label{def:sep_ranks}~
	
	Let $\{n_j\}_{j=1}^d \subset \mathbb{N}$, let $\bold{X} \in \mathbb{R}^{n_1 \times n_2 \times \ldots n_d}$ be a tensor of order $d \in \mathbb{N}$ and for each $j \in \{1, \ldots, d\}$ let $\bold{X}^{<j>}\in \mathbb{R}^{n_1\ldots n_j \times n_{j+1}\ldots n_d}$ denote the $j^{\rm th}$ canonical unfolding of $\bold{X}$. Then we define the $j^{\rm th}$ separation rank of $\bold{X}$, denoted $\text{\rm rank}_{{\rm sep}}(\bold{X})\in \mathbb{N}$ as
	\begin{align}
		\text{\rm rank}_{{\rm sep}, j}(\bold{X}):= \text{\rm rank}\big(\bold{X}^{<j>}\big).
	\end{align}
	Moreover, we define the separation rank of $\bold{X}$, denoted $\text{\rm rank}_{{\rm sep}}(\bold{X}) \in \mathbb{N}^{d-1}$, as 
	\begin{align}
		\text{\rm rank}_{{\rm sep}}(\bold{X}):= \Big( \text{\rm rank}_{{\rm sep}, 1}(\bold{X}),  \text{\rm rank}_{{\rm sep}, 2}(\bold{X}), \ldots,  \text{\rm rank}_{{\rm sep}, d-1}(\bold{X}) \Big).
	\end{align}
\end{definition}

Next, we introduce the following low-rank subsets of $\mathbb{R}^{n_1 \times n_2 \times \ldots n_d}$.

\begin{definition}[Manifold of Tensors with Fixed Separation Rank]\label{def:TT_manifolds}~
	
	Let $\bold{r}= (r_1, \ldots r_{d-1}) \subset \mathbb{N}^d$ be a $(d-1)$-tuple of natural numbers and let $\{n_j\}_{j=1}^d \subset \mathbb{N}$. We define the sets $\mathcal{T}_{\leq \bold{r}} \subset \mathbb{R}^{n_1 \times n_2 \times \ldots n_d}$ and $\mathcal{T}_{\bold{r}} \subset \mathbb{R}^{n_1 \times n_2 \times \ldots n_d}$ as
	\begin{align*}
		\mathcal{T}_{\leq \bold{r}} :=& \Big \{ \bold{X} \in \mathbb{R}^{n_1 \times n_2 \times \ldots n_d} \colon \qquad \text{\rm rank}_{{\rm sep}, j}(\bold{X}) \leq r_j \quad \forall j \in \{1, \ldots, d\} \Big\}, \quad \text{and}\\
		\mathcal{T}_{\bold{r}} :=& \Big \{ \bold{X} \in \mathbb{R}^{n_1 \times n_2 \times \ldots n_d} \colon \qquad \text{\rm rank}_{{\rm sep}, j}(\bold{X}) = r_j \quad \forall j \in \{1, \ldots, d\} \Big\}.
	\end{align*}
	We refer to $\mathcal{T}_{\bold{r}}$ as the manifold of tensors with fixed separation rank $\bold{r} \in \mathbb{N}^{d-1}$.
\end{definition}

Consider Definition \ref{def:TT_manifolds} of the sets $\mathcal{T}_{\leq \bold{r}}$ and $\mathcal{T}_{\bold{r}}$. As a first remark, let us point out that, depending on the choice of separation ranks $\bold{r} \in \mathbb{N}^{d-1}$, the set $\mathcal{T}_{\bold{r}}$ may, in fact, be empty (consider, e.g., $\bold{r}$ with $r_1 > \min\{n_1, n_2\ldots n_d\}$). In the non-trivial case when $\mathcal{T}_{\leq \bold{r}}$ and $\mathcal{T}_{\bold{r}}$ are both non-empty, the structure of these sets has been studied in-depth by Holtz, Rohwedder and Schneider \cite{holtz2012manifolds} and further by Uschmajew and Vandereycken \cite{uschmajew2013geometry}. In particular, it has been shown that $\mathcal{T}_{\bold{r}}$ is an embedded sub-manifold of $\mathbb{R}^{n_1 \times n_2 \times \ldots n_d}$ (hence our chosen nomenclature).


Returning now to the minimization problem \eqref{eq:1}, we observe that we can approximate the sought minimum $\mathcal{E}^*$ by considering, for a judicious choice of ranks $\bold{r}\in \mathbb{N}^{d-1}$, the alternative minimization problem 
\begin{align}\label{eq:2}
	E^*_{\bold{r}}:=\underset{\bold{X} \in  \mathcal{T}_{\bold{r}} \cap U}{\min}\; \mathcal{J}(\bold{X}).
\end{align}

Note that elements of $\mathcal{T}_{\bold{r}}$ are still order $d$ tensors, and it is, therefore, not clear that the minimization problem \eqref{eq:2} is computationally more tractable than the original minimization problem \eqref{eq:1}. It turns out, however, that elements of the manifold $\mathcal{T}_{\bold{r}}$ have a data-sparse representation in terms of the so-called tensor train decomposition. To see this, we next introduce the so-called tensor train parameter space.

\begin{definition}[Tensor Train Parameter Space]\label{def:parameter_space}~
	
	Let $\bold{r}:= (r_1, \ldots, r_{d-1}) \subset \mathbb{N}^{d-1}$, let $r_0=r_d=1$, and let $\{n_j\}_{j=1}^d \subset \mathbb{N}$. We define the tensor train parameter space $\overline{\mathcal{U}}_{\bold{r}}$ as the Cartesian product space
	\begin{align*}
		\overline{\mathcal{U}}_{\bold{r}}:= \bigtimes_{j=1}^d \mathbb{R}^{r_{j-1} \times n_j \times r_{j}},
	\end{align*}
	and we define the rank-constrained parameter set $\mathcal{U}_{\bold{r}} \subset \overline{\mathcal{U}}_{\bold{r}}$ as 
	\begin{align}
		\mathcal{U}_{\bold{r}}:= \Big \{ \big(\bold{U}_1, \ldots, \bold{U}_d\big) \in \overline{\mathcal{U}}_{\bold{r}} \colon \quad \text{\rm rank}(\bold{U}_j^{<2>})=\text{\rm rank}(\bold{U}_{j+1}^{<1>})=r_j \quad \forall j =1, \ldots, d-1\Big\}.
	\end{align}
\end{definition}

Consider Definition \ref{def:parameter_space} of the tensor train parameter space $\overline{\mathcal{U}}_{\bold{r}}$ and subset $\mathcal{U}_{\bold{r}}$. Three remarks are in order. First, as hinted at in this definition, elements of $\overline{\mathcal{U}}$ will typically be denoted by $\bold{U}, \bold{V}$, or $\bold{W}$. Second, given $\bold{U}=(\bold{U}_1, \bold{U}_2, \ldots \bold{U}_d) \in \overline{\mathcal{U}}_{\bold{r}}$, we notice that each $\bold{U}_j$ is a third-order tensor if $j\neq 1, d$ and a second order tensor otherwise, and we will frequently use the notation
\begin{align*}
	\bold{U}_j(:, x_j, :) \in \mathbb{R}^{r_{j-1}\times r_j}, \qquad j \in \{1, \ldots, d\}, ~ x_j \in \{1, \ldots n_j\}
\end{align*}
to denote the $x_j^{\rm th}$ slice of the tensor $\bold{X}_j$, which is simply a matrix of dimension $r_{j-1}\times r_j$. Finally, we note that the set $\mathcal{U}_{\bold{r}}$ is either empty or a dense and open subset of $\overline{\mathcal{U}}_{\bold{r}}$. In the sequel, we will always assume that $\bold{r}$ is chosen such that $\mathcal{U}_{\bold{r}}$ is non-empty.

The next definition allows us to relate the parameters sets to order-$d$ tensors.
\begin{definition}[Tensor Train Contraction Mapping]\label{def:tensor_contraction}~
	
	Let $\bold{r} \in \mathbb{N}^{d-1}$, let $\{n_j\}_{j=1}^d \subset \mathbb{N}$,  and let the tensor train parameter space $\overline{\mathcal{U}}_{\bold{r}}$ be defined as in Definition \ref{def:parameter_space}. We introduce the tensor train contraction mapping $\tau \colon \overline{\mathcal{U}}_{\bold{r}} \rightarrow \mathbb{R}^{n_1 \times n_2 \times \ldots n_d}$ as the function with the property that for all $\bold{U}= (\bold{U}_1, \ldots, \bold{U}_{d})$ $ \in \overline{\mathcal{U}}_{\bold{r}}$ and all $x_j \in \{1, \ldots, n_j\}, ~ j=1, \ldots, d$ it holds that
	\begin{align*}
		\tau(\bold{U})(x_1, \ldots, x_d)&:= \tau(\bold{U}_1, \ldots, \bold{U}_d)(x_1, \ldots, x_d)\\[0.5em]
		&:=  \bold{U}_1(:, x_1, :)\bold{X}_2(:, x_2, :)\ldots \bold{U}_d(:, x_d, :)\\[0.5em]
		&=\sum_{k_1=1}^{r_1} \sum_{k_2=1}^{r_2} \ldots \sum_{k_{d-1}=1}^{r_{d-1}} \bold{U}_1(1, x_1, k_1)\bold{U}_2(k_1, x_2, k_2)\ldots \bold{U}_d(k_{d-1}, x_d, 1).     
	\end{align*} 
\end{definition}

The motivation for introducing the tensor train parameter space $\overline{\mathcal{U}}_{\bold{r}}$ and the rank-constrained parameter set $\mathcal{U}_{\bold{r}} \subset \overline{\mathcal{U}}_{\bold{r}}$ is now apparent through the use of the tensor train contraction $\tau \colon \overline{\mathcal{U}}_{\bold{r}} \rightarrow \mathbb{R}^{n_1 \times n_2 \times \ldots n_d}$: we have the simple relations (see, e.g., \cite{holtz2012manifolds})
\begin{align}\label{eq:TT_relations}
	\tau\big(\overline{\mathcal{U}}_{\bold{r}}\big) = \mathcal{T}_{\leq \bold{r}} \qquad \text{and} \qquad \tau\big({\mathcal{U}}_{\bold{r}}\big) = \mathcal{T}_{\bold{r}}.
\end{align}
Consequently, given any tensor $\tau(\bold{U}) \in \mathcal{T}_{\leq \bold{r}}$, we refer to $\bold{U}=(\bold{U}_1, \ldots \bold{U}_d)$ as a tensor train decomposition of $\tau(\bold{U})$, and we refer to $\bold{U}_j$ as the corresponding $j^{\rm th}$ TT core. Additionally, we will use the phrases \textit{separation ranks} and \textit{TT ranks} interchangeably. 

It has been shown in \cite{uschmajew2013geometry} that the tensor contraction mapping $\tau \colon \mathcal{U}_{\bold{r}} \rightarrow \mathcal{T}_{\bold{r}}$ is a submersion, i.e., $\tau \vert_{\mathcal{U}_{\bold{r}}}$ is of constant rank given by $\text{dim}\mathcal{T}_{\bold{r}}$. This implies, in particular, that $\tau$ is a differentiable mapping that maps open sets to open sets. On the other hand, it is well-known that $\tau$ is \emph{not} injective. Indeed, given $\bold{r}=(r_1, \ldots, r_d) \in \mathbb{N}^{d-1}$, let $ \bold{A}= (\bold{A}_1, \ldots, \bold{A}_d)$ be a tuple of invertible matrices with each $\bold{A}_i \in \mathbb{R}^{r_i \times r_i}$, and let $\bold{U} \in \overline{\mathcal{U}}_{\bold{r}}$. We can then define $\theta_{\bold{U}}(\bold{A}) = (\widehat{\bold{U}}_1, \ldots \widehat{\bold{U}}_d)\in \overline{\mathcal{U}}_{\bold{r}}$ as
\begin{align*}
	\widehat{\bold{U}}_1({x}_1) = {\bold{U}}_1({x}_1)\bold{A}_1, \quad \widehat{\bold{U}}_2({x}_2) = \bold{A}_1^{-1}{\bold{U}}_2({x}_2)\bold{A}_2, \quad \ldots, \quad \widehat{\bold{U}}_d({x}_d) = \bold{A}_{d-1}^{-1}{\bold{X}}_d({x}_d).
\end{align*}
where each $x_j \in \{1, \ldots, n_j\}$ for $j=1, \ldots, d$. Clearly, we then have $\tau(\bold{U})= \tau(\theta_{\bold{U}}(\bold{A}))$. 

As a consequence of the non-injectivity of the tensor contraction mapping $\tau$, it is useful to introduce the notion of so-called orthogonal tensor train decompositions.

\begin{definition}[Orthogonal Tensor Train Decompositions]\label{def:orthog}~
	
	Let $\bold{r} \in \mathbb{N}^{d-1}$, let $\overline{\mathcal{U}}_{\bold{r}}$ be defined according to Definition \ref{def:parameter_space}, let the tensor contraction mapping $\tau \colon \overline{\mathcal{U}}_{\bold{r}} \rightarrow \mathbb{R}^{n_1 \times n_2 \times \ldots \times n_d}$ be defined according to Definition \ref{def:tensor_contraction}, and let $\bold{U}=(\bold{U}_1, \ldots, \bold{U}_d) \in \overline{\mathcal{U}}_{\bold{r}}$. Then,  given $i \in \{1, \ldots, d\}$, we say that $\bold{U}$ is an \textbf{$i$-orthogonal} tensor decomposition of $\tau(\bold{U})\in \mathbb{R}^{n_1 \times n_2 \times \ldots \times n_d}$ if and only if for all $j \in \{1,\ldots, i-1\}$ and all $\ell \in \{i+1, \ldots, d\}$ it holds that
	\begin{align*}
		\big(\bold{U}_j^{<2>}\big)^{\intercal}\bold{U}_j^{<2>} = \bold{I} \in \mathbb{R}^{r_j \times r_j} \text{ and } \bold{U}_\ell^{<1>}\big(\bold{U}_\ell^{<1>}\big)^{\intercal} = \bold{I} \in \mathbb{R}^{r_{\ell-1} \times r_{\ell-1}}.
	\end{align*} 
	Additionally, if $\bold{U}$ is a 1-orthogonal (respectively, $d$-orthogonal) tensor decomposition of $\tau(\bold{U})$, then we say that $\bold{U}$ is a right-orthogonal (respectively left-orthogonal) tensor train decomposition of $\tau(\bold{U})$.
\end{definition}


Returning now to the alternative minimization problem \eqref{eq:2}, we see that one computationally feasible approach to approximating the solution to this problem is to utilize \textit{Riemannian} optimization algorithms on the tensor manifold $\mathcal{T}_{\bold{r}}$. These algorithms requires manipulating elements of the \textit{tangent space} to~$\mathcal{T}_{\bold{r}}$, and these elements can be represented in a data-sparse manner using the parameter space $\overline{\mathcal{U}}_{\bold{r}}$ and Relation \eqref{eq:TT_relations}. We refer interested readers to, e.g., \cite{Stein_thesis} for an overview of such techniques.

An alternative strategy-- and the focus of the present contribution-- is to utilize the tensor contraction mapping $\tau \colon \overline{\mathcal{U}}_{\bold{r}} \rightarrow \mathbb{R}^{n_1 \times n_2 \times \ldots n_d}$ to transport the energy functional $\mathcal{J} \colon U \subset \mathbb{R}^{n_1 \times n_2 \times \ldots n_d} \rightarrow \mathbb{R}$ to the parameter space $\overline{\mathcal{U}}_{\bold{r}}$, and then use optimization techniques directly on $\overline{\mathcal{U}}_{\bold{r}}$. Indeed, using the surjectivity of the tensor contraction mapping $\tau \colon \overline{\mathcal{U}}_{\bold{r}} \rightarrow \mathcal{T}_{\leq \bold{r}}$, we can define the set $\mathcal{V}_{\bold{r}}:= \tau^{-1}\big(\mathcal{T}_{\bold{r}}\cap U\big) \subset {\mathcal{U}}_{\bold{r}} $, and introduce the energy functional $\mathcal{j} \colon {\mathcal{V}}_{\bold{r}} \rightarrow \mathbb{R}$ defined as
\begin{align}\label{eq:j}
	\forall \bold{U}\in \mathcal{V}_{\bold{r}}\colon \qquad    \mathcal{j}(\bold{U}) := \mathcal{J} \circ \tau(\bold{U}).
\end{align}
We then consider the following minimization problem on the parameter set $\overline{\mathcal{U}}_{\bold{r}}$:
\begin{align}\label{eq:3}
	\mathcal{E}^*_{\bold{r}}:=\underset{\bold{U} \in  \mathcal{V}_{\bold{r}}}{\min}\; \mathcal{j}(\bold{U}).
\end{align}
Since the tensor contract mapping $\tau \colon \mathcal{U}_{\bold{r}} \rightarrow \mathcal{T}_{\bold{r}}$ is a submersion, it readily follows that two minimization problems \eqref{eq:2} and \eqref{eq:3} are equivalent in the sense that $\mathcal{E}^*_{\bold{r}}=E^*_{\bold{r}}$, and if $\bold{U}$ is a local minimizer of the energy functional $\mathcal{j}$ in $\mathcal{V}_{\bold{r}}$, then $\tau(\bold{U})$ is a local minimizer of $\mathcal{J}$ in $\mathcal{T}_{\bold{r}}$ and vice-versa. It is important to note, however, that since the tensor contraction mapping $\tau$ is not injective, even if the minimizer $\tau(\bold{U})$ of $\mathcal{J}$ is locally unique, the corresponding minimizer $\bold{U}$ of $\mathcal{j}$ is not.

\subsection{Alternating Minimization on the Tensor Train Parameter Space}\label{sec:2.2} The goal of this section is to briefly describe the celebrated alternating optimization algorithms used to numerically resolve the minimization problem \eqref{eq:3} on the tensor train parameter space $\overline{\mathcal{U}}_{\bold{r}}$. For energy functionals of the form \eqref{eq:energy_eig}, these algorithms are known as the one-site and two-site DMRG methods. Throughout this section, we will make use of the notions introduced in Section \ref{sec:2.1}. In particular, we assume that $\bold{r} \in \mathbb{N}^{d-1}$ with $r_0=r_{d}=1$, that $\{n_j\}_{j=1}^d \subset \mathbb{N}$, and the space $\overline{\mathcal{U}}_{\bold{r}}$ and its non-empty subset $\mathcal{U}_{\bold{r}}$ are defined according to Definition \ref{def:parameter_space}. Moreover, $j \in \{1, \ldots, d\}$ and $k \in \{1, \ldots, d-1\}$ will denote two arbitrary natural numbers.

We begin by introducing the so-called retraction operators that are used to formally set-up the one-site and two-site DMRG methods.

\begin{definition}[Retraction Operators]\label{def:retraction} Let $\bold{U}=(\bold{U}_1, \ldots, \bold{U}_d) \in \overline{\mathcal{U}}_{\bold{r}}$. We define the one-site retraction operator $\mathbb{P}_{\bold{U}, j, 1} \colon \mathbb{R}^{r_{j-1}\times n_j \times r_j} \rightarrow \mathbb{R}^{n_1 \times \ldots \times n_d}$ as the mapping with the property that for all $\bold{V}_j \in   \mathbb{R}^{r_{j-1}\times n_j \times r_j}$ it holds that
	\begin{align*}
		\mathbb{P}_{\bold{U}, j, 1} \bold{V}_j= \tau\big(\bold{U}_1, \ldots, \bold{U}_{j-1}, \bold{V}_j, \bold{U}_{j+1}, \ldots, \bold{U}_d\big).
	\end{align*}
	Additionally, we define the two-site retraction operator $\mathbb{P}_{\bold{U}, k, 2} \colon \mathbb{R}^{r_{k-1}\times n_k \times n_{k+1} \times r_{k+1}} \rightarrow \mathbb{R}^{n_1 \times \ldots \times n_d}$ as the mapping such that for all $\bold{W}_{k, k+1} \in   \mathbb{R}^{r_{k-1}\times n_k \times n_{k+1} \times r_{k+1}}$, all $i \in \{1, \ldots, d\}$ and all ${x}_i \in \{1,\ldots, n_i \} $ it holds that
	\begin{align*}
		&\big(\mathbb{P}_{\bold{U}, k, 2} \bold{W}_{k, k+1} \big)({x}_1, {x}_2, \ldots, {x}_d)\\[0.5em] 
		= &\bold{U}_1(:, x_1, :)\ldots\bold{U}_{k-1}(:, x_{k-1}, :)\bold{W}_{k, k+1}(:, x_{k}, x_{k+1}, :) \bold{U}_{k+2}(:, x_{k+2}, :)\ldots \bold{U}_d(:, x_d, :).
	\end{align*}
\end{definition}

A number of important properties of the one-site and two-site retraction operators have been deduced in \cite{holtz2012alternating}. Of particular importance is the following result, which helps explain the importance of employing orthogonal tensor train decompositions (in the sense of Definition \ref{def:orthog}) in the course of the DMRG algorithm.

\begin{lemma}[Orthogonality of the One-site and Two-site Retraction Operators]\label{lem:retraction}
	Let $\bold{U}=(\bold{U}_1, \ldots, \bold{U}_d) \in \overline{\mathcal{U}}_{\bold{r}}$ and $\bold{V}=(\bold{V}_1, \ldots, \bold{V}_d) \in \overline{\mathcal{U}}_{\bold{r}}$ be a $j$-orthogonal and $k$-orthogonal tensor train decomposition respectively. Then the associated one-site retraction operator $\mathbb{P}_{\bold{U}, j, 1} \colon \mathbb{R}^{r_{j-1}\times n_j \times r_j} \rightarrow \mathbb{R}^{n_1 \times \ldots \times n_d}$ and two-site retraction operator $\mathbb{P}_{\bold{V}, k, 2} \colon \mathbb{R}^{r_{k-1}\times n_k \times n_{k+1} \times r_{k+1}} \rightarrow \mathbb{R}^{n_1 \times \ldots \times n_d}$ are orthogonal projections in the sense that $\forall \bold{W}_j, \widetilde{\bold{W}}_j \in   \mathbb{R}^{r_{j-1}\times n_j\times r_j}$ and $\forall \bold{W}_{k, k+1}, \widetilde{\bold{W}}_{k, k+1} \in   \mathbb{R}^{r_{k-1}\times n_k\times n_{k+1}\times r_{k+1}}$ it holds that 
	\begin{align*}
		\big \langle \mathbb{P}_{\bold{U}, j, 1} \bold{W}_j,  \mathbb{P}_{\bold{U}, j, 1} \widetilde{\bold{W}}_j\big \rangle &= \big \langle  \bold{W}_j,  \widetilde{\bold{W}}_j\big \rangle, \quad  \text{and}\\
		\big \langle \mathbb{P}_{\bold{V}, k, 2} \bold{W}_{k, k+1},  \mathbb{P}_{\bold{V}, k, 2} \widetilde{\bold{W}}_{k, k+1}\big \rangle &= \big \langle  \bold{W}_{k, k+1},  \widetilde{\bold{W}}_{k, k+1}\big \rangle.
	\end{align*}
\end{lemma}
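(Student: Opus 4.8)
The plan is to prove the two orthogonality identities by unfolding the relevant retraction operators into a matrix product and exploiting the defining orthogonality relations of the $j$-orthogonal (resp.\ $k$-orthogonal) decomposition. I will treat the one-site case in detail; the two-site case is essentially identical with the single core $\bold{W}_j$ replaced by the paired core $\bold{W}_{k,k+1}$ and the index $j$ replaced by a "supernode" occupying positions $k, k+1$.

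First I would fix notation for the partial contractions. Given $\bold{U}=(\bold{U}_1,\ldots,\bold{U}_d)$, for each $j$ define the left interface matrix $\bold{L}_{<j}(x_1,\ldots,x_{j-1}) := \bold{U}_1(:,x_1,:)\cdots \bold{U}_{j-1}(:,x_{j-1},:) \in \mathbb{R}^{1\times r_{j-1}}$ and the right interface matrix $\bold{R}_{>j}(x_{j+1},\ldots,x_d) := \bold{U}_{j+1}(:,x_{j+1},:)\cdots\bold{U}_d(:,x_d,:) \in \mathbb{R}^{r_j\times 1}$. With this notation, $(\mathbb{P}_{\bold{U},j,1}\bold{W}_j)(x_1,\ldots,x_d) = \bold{L}_{<j}(x_1,\ldots,x_{j-1})\,\bold{W}_j(:,x_j,:)\,\bold{R}_{>j}(x_{j+1},\ldots,x_d)$, which is a scalar. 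The key point is that, when $\bold{U}$ is $j$-orthogonal, the cores to the left of position $j$ satisfy $(\bold{U}_i^{<2>})^{\intercal}\bold{U}_i^{<2>}=\bold{I}$ and those to the right satisfy $\bold{U}_i^{<1>}(\bold{U}_i^{<1>})^{\intercal}=\bold{I}$. I would show, by induction on the number of cores, that these per-core relations telescope into the interface identities
\begin{align*}
	\sum_{x_1,\ldots,x_{j-1}} \bold{L}_{<j}(x_1,\ldots,x_{j-1})^{\intercal}\bold{L}_{<j}(x_1,\ldots,x_{j-1}) &= \bold{I}\in\mathbb{R}^{r_{j-1}\times r_{j-1}}, \\
	\sum_{x_{j+1},\ldots,x_d} \bold{R}_{>j}(x_{j+1},\ldots,x_d)\bold{R}_{>j}(x_{j+1},\ldots,x_d)^{\intercal} &= \bold{I}\in\mathbb{R}^{r_j\times r_j}.
\end{align*}
The induction step for the left identity uses that summing $\bold{U}_i(:,x_i,:)^{\intercal} M \bold{U}_i(:,x_i,:)$ over $x_i$, where $M$ is the identity by the inductive hypothesis, is exactly $(\bold{U}_i^{<2>})^{\intercal}\bold{U}_i^{<2>}$ after the appropriate reshaping; the right identity is symmetric.

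Once the interface identities are in hand, the main computation is a direct expansion:
\begin{align*}
	\big\langle \mathbb{P}_{\bold{U},j,1}\bold{W}_j, \mathbb{P}_{\bold{U},j,1}\widetilde{\bold{W}}_j\big\rangle
	&= \sum_{x_1,\ldots,x_d} \big(\bold{L}_{<j}\bold{W}_j(:,x_j,:)\bold{R}_{>j}\big)\big(\bold{L}_{<j}\widetilde{\bold{W}}_j(:,x_j,:)\bold{R}_{>j}\big).
\end{align*}
Since each factor is a scalar, I can rearrange it as a trace, carry out the sums over $x_1,\ldots,x_{j-1}$ and over $x_{j+1},\ldots,x_d$ first using the two interface identities, which collapses the left and right "dressings" to identity matrices, and what remains is $\sum_{x_j}\langle \bold{W}_j(:,x_j,:), \widetilde{\bold{W}}_j(:,x_j,:)\rangle = \langle \bold{W}_j,\widetilde{\bold{W}}_j\rangle$. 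For the two-site case the same argument applies verbatim: the left dressing is built from $\bold{V}_1,\ldots,\bold{V}_{k-1}$ (orthonormal by $k$-orthogonality) and the right dressing from $\bold{V}_{k+2},\ldots,\bold{V}_d$, while the "active" block is now indexed by the pair $(x_k,x_{k+1})$; the interface identities hold with $j-1\rightsquigarrow k-1$ and $r_j \rightsquigarrow r_{k+1}$, and the sum over the remaining indices $(x_k,x_{k+1})$ gives the Frobenius inner product of $\bold{W}_{k,k+1}$ with $\widetilde{\bold{W}}_{k,k+1}$.

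The only real obstacle is bookkeeping: keeping the reshapes between third-order cores $\bold{U}_i(:,x_i,:)$ and their unfoldings $\bold{U}_i^{<1>}, \bold{U}_i^{<2>}$ consistent, and making sure the telescoping induction is set up with the right "base case" (the empty product at position $1$, which is the $1\times 1$ identity, so the identities hold trivially there). There is no conceptual difficulty — the result is essentially the statement that a product of left-orthonormal cores has orthonormal columns and a product of right-orthonormal cores has orthonormal rows, which is standard in the MPS/TT literature (cf.\ \cite{holtz2012alternating}) — so I would likely present the interface identities as a short lemma with the induction sketched, and then give the one-paragraph trace computation, remarking that the two-site case is identical.
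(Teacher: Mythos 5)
Your argument is correct and is precisely the standard telescoping argument: the paper itself does not prove this lemma but defers to \cite[Lemma 3.1]{holtz2012alternating}, where the proof proceeds exactly as you describe, by showing that the product of left-orthonormal cores has orthonormal columns and the product of right-orthonormal cores has orthonormal rows, and then collapsing the left and right dressings in the Frobenius inner product. No gaps; the interface identities and the trace computation are exactly what is needed, and the two-site case is indeed verbatim the same.
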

\begin{proof}
	See \cite[Lemma 3.1]{holtz2012alternating} and the preceding discussion.
\end{proof}

Lemma \ref{lem:retraction} has several useful numerical consequences in the practical implementation of DMRG methods and explains, in particular, why DMRG algorithms typically involve an orthogonalization step wherein the underlying tensor train decomposition is appropriately orthogonalized prior to the optimization step. To explain this further, we next introduce the notion of the one-site and two-site DMRG micro-iteration. 

\begin{definition}[One-site and Two-site DMRG micro-iterations]\label{def:micro}~
	
	Let $\mathcal{V}_{\bold{r}}= \tau^{-1}\big(\mathcal{T}_{\bold{r}}\cap U\big) \subset {\mathcal{U}}_{\bold{r}}$. We define the $j^{\rm th}$ one-site DMRG micro-iteration $\mathcal{S}_j \colon \mathcal{V}_{\bold{r}}\rightarrow \mathbb{R}^{r_{j-1}\times n_j \times r_j} $ as the mapping with the property that
	\begin{align}\label{eq:ALS_Micro}
		\forall \bold{U}=(\bold{U}_1, \ldots, \bold{U}_d) \in \mathcal{V}_{\bold{r}}\colon  \qquad \mathcal{S}_j(\bold{U}) =\underset{\bold{W}_j \in \mathbb{R}^{r_{j-1}\times n_j \times r_j}}{\text{\rm argmin}} \mathcal{J} \circ \mathbb{P}_{\bold{U}, j, 1}(\bold{W}_{j} ).
	\end{align}     
	Additionally, we define the $k^{\rm th}$ two-site DMRG micro-iteration, denoted $\mathcal{S}_{k, k+1} \colon \mathcal{V}_{\bold{r}}\rightarrow \mathbb{R}^{r_{k-1}\times n_k \times n_{k+1} \times r_{k+1}}$, as the mapping with the property that
	\begin{align} \label{eq:MALS_Micro}
		\forall \bold{V}=(\bold{V}_1, \ldots, \bold{V}_d) \in {\mathcal{V}}_{\bold{r}}\colon  \hspace{2mm} \mathcal{S}_{k, k+1}(\bold{V}) = \hspace{-3mm}\underset{\substack{\bold{W}_{k, k+1} \in \mathbb{R}^{r_{k-1}\times n_k \times n_{k+1} \times r_{k+1}}}}{\text{\rm argmin}} \mathcal{J} \circ \mathbb{P}_{\bold{V}, k, 2}(\bold{W}_{k, k+1}).
	\end{align}
\end{definition}

Equipped with the definition of the one-site and two-site DMRG micro-iterations, we next present the following lemma which indicates the importance of a judicious choice of orthogonalization for the underlying tensor train decomposition.

\begin{lemma}[DMRG Micro-iterations for Eigenvalue Problems]\label{lem:ALS_micro}~
	
	Let $ \mathcal{V}_{\bold{r}}= \tau^{-1}\big(\mathcal{T}_{\bold{r}}\cap U\big) \subset {\mathcal{U}}_{\bold{r}}$, let $\bold{U}$ and $\bold{V} \in \mathcal{V}_{\bold{r}}$ be a $j$-orthogonal and $k$-orthogonal tensor train decomposition respectively, and assume that the underlying energy functional $\mathcal{J} \colon U \subset \mathbb{R}^{n_1 \times n_2 \times \ldots n_d} \rightarrow \mathbb{R}$ is given by Equation \eqref{eq:energy_eig}, i.e., $\mathcal{J}(\bold{X})= \frac{\langle \bold{X}, \bold{A}\bold{X}\rangle}{\Vert \bold{X}\Vert^2}$ for all $\bold{X}\in  U=\mathbb{R}^{n_1 \times n_2 \times \ldots n_d}\setminus \{0\}$ and some symmetric operator $\bold{A}\colon \mathbb{R}^{n_1 \times n_2 \times \ldots n_d} \rightarrow \mathbb{R}^{n_1 \times n_2 \times \ldots n_d}$. Then the one-site and two-site DMRG micro-iterations can be obtained by solving the eigenvalue problems
	\begin{align*}
		\mathbb{P}_{\bold{U}, j,1}^* \bold{A}\mathbb{P}_{\bold{U}, j, 1}\bold{W}_j= \lambda_{j}^{\bold{A}}\bold{W}_j, \qquad \text{and} \qquad 
		\mathbb{P}_{\bold{V}, k, 2}^* \bold{A}\mathbb{P}_{\bold{V}, k, 2}\bold{W}_{k, k+1}={\lambda}_{k, k+1}^{\bold{A}} \bold{W}_{k, k+1},
	\end{align*}
	where $ \mathbb{P}_{\bold{U}, j,1}^*$ and $\mathbb{P}_{\bold{V}, k, 2}^*$ are the adjoints of the retraction operators $ \mathbb{P}_{\bold{U}, j,1}$ and $\mathbb{P}_{\bold{V}, k, 2}$ respectively, and  $\lambda_{j}^{ \bold{A}}, {\lambda}_{k, k+1}^{\bold{A}}$ are the smallest eigenvalues of the respective operators.
\end{lemma}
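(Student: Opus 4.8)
The plan is to turn each DMRG micro-iteration into a plain Rayleigh-quotient minimization on the small parameter space and then invoke the Courant--Fischer characterization of the lowest eigenvalue.

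First I would use the orthogonality of the retraction operators. Since $\bold{U}$ is $j$-orthogonal, Lemma~\ref{lem:retraction} gives $\langle \mathbb{P}_{\bold{U}, j, 1}\bold{W}_j, \mathbb{P}_{\bold{U}, j, 1}\widetilde{\bold{W}}_j\rangle = \langle \bold{W}_j, \widetilde{\bold{W}}_j\rangle$ for all $\bold{W}_j, \widetilde{\bold{W}}_j \in \mathbb{R}^{r_{j-1}\times n_j \times r_j}$. By nondegeneracy of the Frobenius inner product this is equivalent to the operator identity $\mathbb{P}_{\bold{U}, j, 1}^*\, \mathbb{P}_{\bold{U}, j, 1} = \bold{I}$ on $\mathbb{R}^{r_{j-1}\times n_j\times r_j}$; in particular $\Vert\mathbb{P}_{\bold{U}, j, 1}\bold{W}_j\Vert = \Vert\bold{W}_j\Vert$, so $\mathbb{P}_{\bold{U}, j, 1}\bold{W}_j$ lies in $U = \mathbb{R}^{n_1\times\cdots\times n_d}\setminus\{0\}$ exactly when $\bold{W}_j\neq 0$, and the $\text{argmin}$ in \eqref{eq:ALS_Micro} is effectively taken over $\bold{W}_j\neq 0$.

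Next I would insert the explicit form \eqref{eq:energy_eig} of $\mathcal{J}$ and push $\mathbb{P}_{\bold{U}, j, 1}$ across the inner product:
\[
\mathcal{J}\circ\mathbb{P}_{\bold{U}, j, 1}(\bold{W}_j) = \frac{\big\langle \mathbb{P}_{\bold{U}, j, 1}\bold{W}_j,\ \bold{A}\,\mathbb{P}_{\bold{U}, j, 1}\bold{W}_j\big\rangle}{\Vert\mathbb{P}_{\bold{U}, j, 1}\bold{W}_j\Vert^2} = \frac{\big\langle \bold{W}_j,\ \mathbb{P}_{\bold{U}, j, 1}^*\bold{A}\,\mathbb{P}_{\bold{U}, j, 1}\bold{W}_j\big\rangle}{\Vert\bold{W}_j\Vert^2}.
\]
The operator $\bold{B}_j := \mathbb{P}_{\bold{U}, j, 1}^*\bold{A}\,\mathbb{P}_{\bold{U}, j, 1}$ on $\mathbb{R}^{r_{j-1}\times n_j\times r_j}$ is symmetric because $\bold{A}$ is, so the last expression is precisely the Rayleigh quotient of $\bold{B}_j$. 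By Courant--Fischer, its infimum over nonzero arguments equals the smallest eigenvalue $\lambda_j^{\bold{A}}$ of $\bold{B}_j$ and is attained exactly on the associated eigenspace; hence any minimizer $\mathcal{S}_j(\bold{U})$ in \eqref{eq:ALS_Micro} satisfies $\mathbb{P}_{\bold{U}, j, 1}^*\bold{A}\,\mathbb{P}_{\bold{U}, j, 1}\bold{W}_j = \lambda_j^{\bold{A}}\bold{W}_j$, as claimed. The two-site identity follows by the same computation, with $\bold{U}$, $j$, $\mathbb{P}_{\bold{U}, j, 1}$, \eqref{eq:ALS_Micro} replaced by $\bold{V}$, $k$, $\mathbb{P}_{\bold{V}, k, 2}$, \eqref{eq:MALS_Micro}, using the second identity of Lemma~\ref{lem:retraction}.

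Since the argument is essentially a norm-preserving change of variables, I do not expect a genuine obstacle. The only points requiring care are: deducing the operator identity $\mathbb{P}^*\mathbb{P} = \bold{I}$ (rather than merely the norm equality) from Lemma~\ref{lem:retraction}, which over $\mathbb{R}$ is immediate by polarization; and reading the $\text{argmin}$ in Definition~\ref{def:micro} as ``some minimizer'', since the Rayleigh quotient is scale invariant and $\lambda_j^{\bold{A}}$ need not be simple, so the micro-iteration output is only pinned down up to the choice of a ground eigenvector of $\bold{B}_j$.
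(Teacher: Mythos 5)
Your proposal is correct and follows exactly the route the paper intends: it deduces $\mathbb{P}^*\mathbb{P}=\bold{I}$ from the orthogonality in Lemma~\ref{lem:retraction}, rewrites $\mathcal{J}\circ\mathbb{P}$ as the Rayleigh quotient of the symmetric operator $\mathbb{P}^*\bold{A}\mathbb{P}$, and applies Courant--Fischer; the paper itself only states that the result ``essentially follows from Lemma~\ref{lem:retraction}'' and defers details to the reference. Your write-up simply supplies those details, including the correct caveat that the minimizer is determined only up to the choice of a ground eigenvector.
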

\begin{proof}
	This essentially follows from Lemma \ref{lem:retraction} but see \cite{holtz2012alternating} for further details.
\end{proof}

\begin{remark}\label{rem:ALS_micro}
	Consider the setting of Lemma \ref{lem:ALS_micro}. As discussed in \cite{holtz2012alternating}, if $\bold{U}, \bold{V}\in \mathcal{V}_{\bold{r}}$ are not $j$-orthogonal and $k$-orthogonal respectively, the DMRG micro-iterations will consist of solving badly-conditioned generalized eigenvalue problems-- hence the importance of the orthogonalization step in practical DMRG implementations.
\end{remark}

The classical one-site and two-site DMRG algorithms are now presented below in the form of Algorithm \ref{alg:ALS}. We end this section with the following important remark on the well-posedness of successive DMRG micro-iterations.

\begin{remark}[Well-Posedness of Successive DMRG Micro-iterations]\label{rem:well-posedness}~
	
	In order for the DMRG Algorithm \ref{alg:ALS} to be well-posed, we require that for a given initialization $\bold{U}_{\rm LR}^{0, (0)} \in \mathcal{V}_{\bold{r}}$, each subsequent iterate $\bold{U}_{\rm LR}^{j, (n)}$, $\bold{U}_{\rm RL}^{k, (n)}$~ $j \in \{1, \ldots, d-1\}, ~ k \in \{2, \ldots, d\}$, and $n \in \mathbb{N}$ should also be an element of $ \mathcal{V}_{\bold{r}}$. This is indeed true for quadratic and Rayleigh quotient-based energy functionals $\mathcal{J}$ (and in the two-site DMRG case, for a judicious choice of rank truncation parameter $\widetilde{\bold{r}}$) provided that the initialization is sufficiently close to the true minimizer \cite{rohwedder2013local, uschmajew2013geometry}. For a convergence analysis of the one-site DMRG algorithm in these cases, we refer interested readers to \cite{rohwedder2013local, uschmajew2013geometry}.
\end{remark}

\begin{algorithm}
	\caption{Classical One-site/ Two-site DMRG algorithm}\label{alg:ALS}
	\begin{algorithmic}
		\State\hspace{-0.4cm}\textbf{Objective:} Compute $\underset{\bold{U} \in  {\mathcal{V}}_{\bold{r}}}{\min}\; \mathcal{j}(\bold{U})$ for some choice of $\bold{r}\in \mathbb{N}^{d-1}$.
		
		\vspace{1mm}
		\Require \textit{Right-orthogonal} TT decomposition $\bold{U}_{\rm LR}^{0, (0)} \in \mathcal{V}_{\bold{r}}= \tau^{-1}\big(\mathcal{T}_{\bold{r}}\cap U\big) \subset {\mathcal{U}}_{\bold{r}}$.
		\For{$n=0, 1, 2, \ldots,$}
		
		\vspace{1mm}
		\For{$i=1, 2, \ldots, d-1$}

		\State 1. Perform the $i^{\rm th}$ one-site/two-site DMRG micro-step, i.e., \vspace{-3mm}
		\begin{align*}
			\hspace{0.2cm}\text{Compute either }  \bold{W}_i:=\mathcal{S}_i(\bold{U}_{\rm LR}^{i, (n+1)}) \text{ or }   \bold{W}_{i, i+1}=\mathcal{S}_{i, i+1}(\bold{U}_{\rm LR}^{i, (n+1)}). 
		\end{align*}

		\vspace{1mm}
		\State 2. (One-Site DMRG) Perform the orthogonalization step:\vspace{-3mm}
		\begin{align*}
			&\hspace{1.45cm}\text{Compute } \bold{W}_i^{<2>}=\bold{Q}^{<2>}_i \bold{R}_i ~ \text{with } \bold{Q}^{<2>}_i\in \mathbb{R}^{r_{i-1}n_i \times r_i}, ~  \bold{R}_i \in \mathbb{R}^{r_i \times r_i},\\
			& \hspace{1.45cm}\text{Update tensor train decomposition:}\\
			&\hspace{1.45cm}\bold{U}_{\rm LR}^{i, (n+1)}= (\bold{U}^{(n+1)}_1, \ldots,\bold{U}^{(n+1)}_{i-1}, \widetilde{\bold{U}}_i^{(n)}, \bold{U}^{(n)}_{i+1}, \ldots,\bold{U}^{(n)}_d )\\
			&\hspace{0.85cm}\hookrightarrow \bold{U}_{\rm LR}^{i+1, (n+1)}= (\bold{U}^{(n+1)}_1, \ldots,\bold{U}^{(n+1)}_{i-1},\bold{U}^{(n+1)}_{i}, \widetilde{\bold{U}}^{(n)}_{i+1}, \ldots,\bold{U}^{(n)}_d), \\
			&\hspace{1.45cm}\text{where } \bold{U}^{(n+1)}_{i}= \bold{Q}_i, ~ \widetilde{\bold{U}}^{(n)}_{i+1}=\bold{R}_i\bold{U}^{(n)}_{i+1}.
		\end{align*}
		
		\State 2. (Two-Site DMRG) Select $\widetilde{\bold{r}}_i$ and perform the truncated SVD step:\vspace{-3mm}
		\begin{align*}
			&\hspace{1cm}\text{Compute } \bold{W}_{i, i+1}^{<2>}=\bold{P}_i^{<2>} \bold{\Sigma}_{i} \bold{Q}_{i}^{<1>}~ \text{with } \bold{P}_i^{<2>}\in \mathbb{R}^{r_{i-1}n_i \times \widetilde{r}_i}, ~  \bold{Q}_{i}^{<1>} \in \mathbb{R}^{\widetilde{r}_i \times n_{i+1} r_{i+1}},\\
			& \hspace{1cm}\text{Update tensor train decomposition:}\\
			&\hspace{1cm}\bold{U}_{\rm LR}^{i, (n+1)}= (\bold{U}^{(n+1)}_1, \ldots,\bold{U}^{(n+1)}_{i-1}, \widetilde{\bold{U}}_i^{(n)}, \bold{U}^{(n)}_{i+1}, \ldots,\bold{U}^{(n)}_d )\\
			&\hspace{0.4cm}\hookrightarrow \bold{U}_{\rm LR}^{i+1, (n+1)}= (\bold{U}^{(n+1)}_1, \ldots,\bold{U}^{(n+1)}_{i-1},\bold{U}^{(n+1)}_{i}, \widetilde{\bold{U}}^{(n)}_{i+1}, \ldots,\bold{U}^{(n)}_d), \\
			&\hspace{1cm}\text{where } \bold{U}^{(n+1)}_{i}= \bold{P}_i, ~ \widetilde{\bold{U}}^{(n)}_{i+1}=\bold{S}_i\bold{Q}_i.
		\end{align*}
		
		\EndFor
		\vspace{2mm}
		\State Set $\bold{U}_{\rm RL}^{d, (n+1)}=\bold{U}_{\rm LR}^{d-1, (n+1)}$.
		\vspace{2mm}
		
		\For{$i=, d, d-1, \ldots, 2$}
		\State Repeat Steps 1. and 2. above in the opposite direction.
		\EndFor
		\vspace{2mm}
		\State Set $\bold{U}_{\rm LR}^{1, (n+1)}=\bold{U}_{\rm RL}^{2, (n+1)}$.
		
		\vspace{2mm}
		\EndFor
	\end{algorithmic}
\end{algorithm}

\section{Additive Two-Level DMRG with Coarse Space Correction}\label{sec:3}~

Equipped with the tools and notions presented in Section \ref{sec:2}, we are now ready to present our proposal for the additive two-level DMRG algorithm with coarse space correction, or A2DMRG for short. Prior to presenting the details, let us first present a high-level intuitive explanation of the algorithm. To avoid tedious notation, we describe the two-site case but the idea behind the one-site version is identical.

{Essentially, the A2DMRG algorithm consists of the following steps: Given an initial low-rank tensor  $\bold{Y}^{(0), 0}\in \mathcal{T}_{\bold{r}}$, we first perform $d-1$ \textit{local} optimization steps. For each $j\in \{1, \ldots, d-1\}$, the $j^{\rm th}$ local optimization step optimizes the combined $j^{\rm th}$ and $j+1^{\rm st}$ \textit{two-site} TT core of the $j$-orthogonal TT decomposition of $\bold{Y}^{(0), 0}$, this choice of orthogonalization being motivated by Lemma \ref{lem:ALS_micro} and Remark \ref{rem:ALS_micro}. Consequently, each of these local steps can be performed independently in parallel and we obtain $d-1$ updated tensors $\{{\bold{Y}}^{(0), j}\}_{j=1}^{d-1}$. Next, we compute an optimal linear combination $\widetilde{\bold{Y}}^{(1), 0}\in \mathbb{R}^{n_1 \times \ldots \times n_d}$ of the tensors $\{{\bold{Y}}^{(0), j}\}_{j=0}^{d-1}$ by optimizing the given energy functional $\mathcal{J}\colon U \subset \mathbb{R}^{n_1\times \ldots \times n_d}\rightarrow \mathbb{R}$ on the coarse space $\text{span}\{{\bold{Y}}^{(0), j}\}_{j=0}^{d-1}$. Finally, we apply a compression step by projecting $\widetilde{\bold{Y}}^{(1), 0}$ onto the low-rank manifold $\mathcal{T}_{\widetilde{\bold{r}}} \subset \mathbb{R}^{n_1 \times \ldots \times n_d}$ for a judicious choice of rank parameters $\widetilde{\bold{r}}\in \mathbb{N}^{d-1}$. This yields an updated low-rank tensor $\bold{U}^{(1), 0}\in \mathcal{T}_{\widetilde{\bold{r}}}$ that can be used as a new initialization.}

An in-depth description of the A2DMRG algorithm is presented below in the form of \Cref{alg:PALS}. We observe that the algorithm consists of four core steps. Let us now elaborate on each of these steps.

\begin{algorithm}[H]
	\caption{The Additive Two-Level DMRG Algorithm (A2DMRG)}\label{alg:PALS}
	\begin{algorithmic}
		\State\hspace{-0.4cm}\textbf{Objective:} Compute $\underset{\bold{X} \in  {\mathcal{V}}_{\bold{r}}}{\min}\; \mathcal{j}(\bold{X})$ for some choice of $\bold{r}\in \mathbb{N}^{d-1}$.
		
		\vspace{0mm}
		\Require \textit{Left-orthogonal} tensor train decomposition $\bold{U}^{(0), d} \in \mathcal{V}_{\bold{r}}= \tau^{-1}\big(\mathcal{T}_{\bold{r}}\cap U\big)$.
		\For{$n=0, 1, 2, \ldots,$}
		
		\vspace{1mm}
		\State Save a \emph{left-orthogonal} TT decomposition $\bold{U}^{(n), d}= (\bold{U}_1^{(n)}, \ldots , \bold{U}_{d-1}^{(n)}, {\bold{W}}_d^{(n)}) \in \mathcal{V}_{\bold{r}}$:
		
		\vspace{1mm}
		
		\begin{mdframed}[hidealllines=true,backgroundcolor=gray!20]
			\For{$i= d, d-1, \ldots, 2$}
			\State Given an $i$-orthogonal TT decomposition: \vspace{-1mm}
			\begin{align*}
				\bold{U}^{(n), i}= \big(\bold{U}_1^{(n)}, \ldots , \bold{U}_{i-1}^{(n)}, \bold{W}_i^{(n)}, \bold{V}_{i+1}^{(n)}, \ldots, \bold{V}_{d}^{(n)}\big) \in \mathcal{V}_{\bold{r}}
			\end{align*}
			\State Perform an orthogonalization step:\vspace{-1mm}
			\begin{align*}
				\hspace{0.5cm}&\text{Compute }~   {\bold{W}}_i^{<1>}=\bold{L}_i\bold{Q}^{<i>}_i  \text{ with } \bold{L}_i\in \mathbb{R}^{r_{i-1}\times r_{i-1}}, ~  \bold{Q}^{<i>}_i \in \mathbb{R}^{r_{i-1} \times n_ir_i},\\
				\hspace{1.05cm}&\text{Save the $(i-1)$-orthogonal tensor train decomposition:}\\
				&\hspace{1.05cm}\bold{U}^{(n), {i-1}}= \big(\bold{U}_1^{(n)}, \ldots , \bold{U}_{i-2}^{(n)}, \bold{W}_{i-1}^{(n)}, \bold{V}_{i}^{(n)}, \ldots, \bold{V}_{d}^{(n)}\big) \in \mathcal{V}_{\bold{r}}\\
				&\hspace{1.45cm}\text{where } \bold{W}_{i-1}^{(n)}= \bold{U}_{i-1}^{(n)}\bold{L}_i, ~ \bold{V}_i^{(n)}=\bold{Q}_i.
			\end{align*}    
			\vspace{-9mm}
			\EndFor
		\end{mdframed}

		\vspace{1mm}

		\begin{mdframed}[hidealllines=true,backgroundcolor=gray!60]
			\For{$i=1, 2, \ldots, d$ (respectively, $i=1, 2, \ldots, d-1$) \underline{in parallel}}
			
			\State  Perform a one-site (respectively, two-site) DMRG micro-step, i.e., \vspace{-1mm}
			\begin{align*}
				\hspace{1.15cm} \text{Compute }~ \bold{V}_i=\mathcal{S}_i(\bold{U}^{(n), i}), \text{ or compute }~ \bold{W_{i, i+1}} =\mathcal{S}_{i, i+1}(\bold{U}^{(n), i}).
			\end{align*}
			\vspace{-1mm}
			\State Define updated TT decomposition $\widetilde{\bold{U}}^{(n+1), i}$ using $\bold{V}_i$ or  $\bold{W_{i, i+1}}$.  \vspace{1mm}   
			\EndFor
			\vspace{2mm}
			\State Set $\widetilde{\bold{U}}^{(n+1), 0}= \bold{U}^{(n), d}$.  
			
		\end{mdframed}

		\vspace{1mm}

		\begin{mdframed}[hidealllines=true,backgroundcolor=gray!20]
			
			\State  Compute a minimizer $\{c^*_j\}_{j=0}^d  \in \mathbb{R}^{d+1} $  of
			\vspace{-4mm}
			\begin{align}\label{eq:d_optim}
				\underset{\{c_j\}_{j=0}^d \in \mathbb{R}^{d+1}}{\text{min}} \;\mathcal{J} \Big(\sum_{j=0}^d c_j\tau\left(\widetilde{\bold{U}}^{(n+1), j}\right) \Big) \qquad &\text{(One-Site Case)},\\
				\intertext{\hspace{1.6cm}or a minimizer $\{d^*_j\}_{j=0}^{d-1}  \in \mathbb{R}^{d-1} $ of}
				\underset{\{d_j\}_{j=0}^{d-1} \in \mathbb{R}^d}{\text{min}} \;\mathcal{J} \Big(\sum_{j=0}^{d-1} d_j\tau\left(\widetilde{\bold{U}}^{(n+1), j}\right) \Big) \qquad &\text{(Two-Site Case)}. \nonumber
			\end{align}
			
		\end{mdframed}
		
		\vspace{1mm}

		\begin{mdframed}[hidealllines=true,backgroundcolor=gray!60]
			
			\State Apply compression, i.e., compute left-orthogonal, quasi-optimal approximation 
			\vspace{-6mm}
			\begin{align*}
				\mathcal{V}_{\widetilde{\bold{r}}} \ni \bold{U}^{(n+1), d} \approx \sum_{j=0}^{d} c^*_j\widetilde{\bold{U}}^{(n+1), j} \qquad &\text{(One-Site Case)} \quad \text{or},\\
				\mathcal{V}_{\widetilde{\bold{r}}} \ni \bold{U}^{(n+1), d} \approx \sum_{j=0}^{d-1} d^*_j\widetilde{\bold{U}}^{(n+1), j} \qquad &\text{(Two-Site Case)},
			\end{align*}
			\vspace{-1mm}
			for a given choice of rank parameter $\widetilde{\bold{r}}\geq \bold{r}$. 
		\end{mdframed}
		
		\vspace{0mm}
		\EndFor
	\end{algorithmic}
\end{algorithm}

The first step of the A2DMRG \Cref{alg:PALS} consists a sequence of LQ orthogonalizations are performed on the components of the initial tensor train decomposition $\bold{U}^{(n), d}$. The goal of this step is to obtain a $j$-orthogonal copy of $\bold{U}^{(n), d}$ for each $j \in \{1, \ldots, d\}$. Note that this type of orthogonalization procedure is typically applied as a pre-processing step in the classical one-site and two-site DMRG algorithms \ref{alg:ALS} to obtain an appropriate initialization. In the A2DMRG \Cref{alg:PALS} therefore, we simply have to apply the orthogonalization twice. Current implementations of this step are sequential in nature.

In the second step, for each $j \in \{1, \ldots, d\}$  (respectively, $j \in \{1, \ldots, d-1\}$), we apply the $j^{\rm th}$ one-site (respectively, two-site) DMRG micro-iteration on the $j$-orthogonal copy of $\bold{U}^{(n), d}$. In view of Lemma \ref{lem:ALS_micro}, the resulting micro-iterations are numerically well-conditioned if the energy functional $\mathcal{J}$ is a quadratic form involving a well-conditioned symmetric positive definite tensor operator $\bold{A}$. In case the energy functional $\mathcal{J}$ is a Rayleigh quotient involving a symmetric operator $\bold{A}$, Remark \ref{rem:ALS_micro} shows that the micro-iterations involve the solution of a symmetric eigenvalue problem. We emphasize that each one-site/two-site DMRG micro-iteration in this step can be performed by an independent processor in parallel, and this is discussed in further detail in Section \ref{sec:3.3} in the supplementary material.

In the third step, we perform a second-level minimization by seeking a minimizer of the energy functional $\mathcal{J}$ on a vector space of dimension at most $d+1$ (one-site setting) or $d$ (two-site setting). This step, which is the subject of extensive discussion in Section \ref{sec:3.1} below, requires gathering the eigenfunctions (reshaped into updated TT cores) computed by each processor in parallel. Note that this is similar to two-level domain decomposition methods for solving linear systems of equations. 

Since any minimizer obtained in the previous step will, in general, possess a tensor train decomposition with higher ranks than the initial ranks $\bold{r}$ of $\bold{U}^{(n), d}$, the fourth step in A2DMRG consists of the application of some compression algorithm (such as the TT rounding algorithm of Oseledets \cite{oseledets2011tensor}) to this minimizer so as to obtain a quasi-optimal approximation with lower ranks. This step, which is-- at least partially-- also sequential in nature, is the subject of further discussion in Section \ref{sec:3.3} in the supplementary material.

We conclude this section with a number of important remarks.


\begin{remark}[Extensions of the A2DMRG \Cref{alg:PALS}]
	
	Consider once again the A2DMRG \Cref{alg:PALS} and assume that the underlying energy functional $\mathcal{J}\colon \mathbb{R}^{n_1 \times \ldots \times n_d}$ takes the form of a Rayleigh quotient as given by Equation \eqref{eq:energy_eig}. In this situation, at least two straightforward extensions of \Cref{alg:PALS} are conceivable, both based on modifying the coarse space. 
	
	\begin{description} 
		\item[Extension One] As noted in Lemma \ref{lem:ALS_micro}, for each $i \in \{1, \ldots, d\}$, the $i^{\rm th}$ one-site DMRG micro-iteration consists of using the Lanczos algorithm to compute the lowest eigenpair of a matrix $\bold{A}_i$ of dimension $r_{i-1}n_i r_i \times r_{i-1}n_i r_i$. An updated TT decomposition is then obtained by replacing the $i^{\rm th}$ TT core of the previous iterate with the newly computed lowest eigenvector (see Step 2 of \Cref{alg:PALS}). The coarse space is constructed using these $d$ updated TT decompositions along with the previous global iterate.
		
		Notice, however, that the symmetric matrix $\bold{A}_i$ possesses $r_{i-1}n_ir_i$ orthogonal eigenvectors. An obvious idea is therefore to store not only the lowest eigenvector of $\bold{A}_i$ but the $K \in \mathbb{N}$ lowest eigenvectors yielded by the Lanczos algorithm. We can then construct $K$ updated TT decompositions by replacing the $i^{\rm th}$ TT core of the previous iterate with the each of the newly obtained and reshaped $K$ eigenvectors. Proceeding in this manner for each of the $d$ one-site DMRG micro-iterations allows to construct an enriched coarse space using the linear span of $Kd+1$ tensors rather than $d+1$ tensors as before. Similar considerations, of course, apply to two-site A2DMRG.
		
		\item[Extension Two] Given a TT decomposition consisting of $d$ TT cores, the one-site A2DMRG \Cref{alg:PALS} consists of performing $d$ independent, parallel local optimizations, one for each TT core. This yields $d$ updated TT decompositions and a coarse space consisting of the linear span of $d$+1 tensors.
		
		Another natural idea is to instead partition the $d$ TT cores into $M < d$ groups, with each group consisting of $d/M$ adjoining TT cores. For any such group, the $d/M$ adjoining TT cores can then be \textit{sequentially optimized} using the classical DMRG algorithm, resulting in a single, new TT decomposition that contains $d/M$ updated TT cores. Proceeding in this manner for each of the $M$ groups therefore yields $M$ updated TT decompositions (each computed independently and in parallel) with a corresponding coarse space constructed using these $M$ updated TT decompositions along with the previous global iterate. Note that this extension can,  in particular,  be seen as combining the approach of Stoudenmire and White \cite{stoudenmire2013real} with a second-level coarse space correction.
	\end{description}
	
	An exploration of these ideas will be the subject of future work. 
\end{remark}

\begin{remark}[Compression Step in A2DMRG \Cref{alg:PALS}]~
	
	A detailed study of the computational cost of the A2DMRG \Cref{alg:PALS} in comparison to the classical, sequential DMRG algorithm is the subject of Section \ref{sec:3.3} in the supplementary material. Let us nonetheless briefly comment on the computational cost of the final compression step of \Cref{alg:PALS}. 
	
	For the one-site (respectively, two-site) algorithm, this step consists of compressing the TT ranks of a linear combination of $d+1$ (respectively, $d$) tensors in TT format each with maximal rank ${r}\in \mathbb{N}$. It is well-known that such a linear combination of $d$ tensors possesses a tensor train decomposition of maximal rank $(d+1){r}$ (respectively, $dr$). At first glance, using the TT-rounding algorithm of Oseledets \cite{oseledets2011tensor} or its various randomized improvements would then cost $\mathcal{O}(d^3nr^3)$ flops but this is misleading. 
	
	The essential point is that each of the TT decompositions in the linear combination differs from the rest in at most two (respectively, four) TT cores. Consequently, any tensor resulting from a linear combination of such TT format tensors is highly structured (for instance, in the one-site case, it has a TT decomposition of maximal rank $2r$). The compression step can therefore be performed with a much reduced cost. We refer to Section \ref{sec:3.3} in the supplementary material for details. {Table \ref{table_1} below also summarizes the dominant computational cost of one global iteration (one half-sweep, respectively,) of the A2DMRG algorithm (classical DMRG algorithm, respectively).}
\end{remark}

\begin{table}[h!] 
	\centering
	\begin{tabular}{|c|c|c|}
		\hline
		~ & Half-sweep DMRG cost & A2MDRG global iteration cost per processor \\
		\hline
		1-site &  $\mathcal{O}\big(d n r^3 R(K+d)\big)$ & $\mathcal{O}\big(n r^3 R(K+d)\big)+ \mathcal{O}\big(d nr^3RK'\big)$  \\
		\hline
		2-site &  $\mathcal{O}\big(d n r^3 R(nK+d)\big)$ & $\mathcal{O}\big(n r^3 R(Kn+d)\big) + \mathcal{O}\big(d n^3r^3RK'\big)$ \\
		\hline
	\end{tabular}
	\caption{{Dominant computational cost of the classical DMRG and A2DMRG methods for solving eigenvalue problems arising from quantum chemistry. Here, $n=\max \{n_j\}_{j \in \mathbb{N}}$, $r=\max\bold{r}$, $R$ is the maximal rank in the MPO representation of the underlying tensor operator $\bold{A}$, $K$ is the maximal number of Krylov iterations required for all DMRG micro-iterations and $K'$ is the number of Krylov iterations required for the solution of the second-level minimization problem. We also assume that $nR < r$, and that the SVD steps in two-site DMRG micro-iterations produce maximal ranks $\widetilde{r}=\mathcal{O}(r)$. For a derivation of these costs, we refer to the supplementary material.}}
	\label{table_1}
\end{table}

{
	\begin{remark}[Connection to Additive Schwarz Methods]\label{rem:dd} 
		As claimed in the introduction, the A2DMRG Algorithm \ref{alg:PALS} is inspired by the well-known additive Schwarz methods from the domain decomposition literature. To see this connection, let us briefly recall the abstract framework of additive Schwarz methods \cite{griebel1995abstract}: Let $\big(V, (\cdot, \cdot)_V\big)$ be a finite-dimensional Hilbert space, let $A \colon V \rightarrow V$ be a symmetric, positive definite operator, and suppose we are interested in solving, for some given $b \in V$
		\begin{align*}
			Au &= b  \qquad \text{or equivalently, in variational form,}\\
			\underset{v \in V}{\text{argmin}} \mathcal{R}(v) &\qquad \text{with }~ \mathcal{R}(v)=\frac{1}{2} (Av, v)_V - (b, v)_V.
		\end{align*}
		We assume that $V$ possesses a decomposition of the form $V = \sum_{j=1}^d R_j^{\dagger}V_j$ where each $V_j$ is a Hilbert space and $R_j^{\dagger} \colon V_j \rightarrow V$ is a prolongation operator with corresponding restriction operator $R_j \colon V \rightarrow V_j$. In this framework, assuming the initialization $u^{(0)}\in V$, an additive Schwarz method with damping factors $\{\omega_j\}_{j=0}^d \in \R, ~\omega_0=1,$ can be defined through the iterates
		\begin{align*}
			u^{(\ell+1)}= \omega_0u^{(\ell)} - \sum_{j=1}^d \omega _jR_j^{\dagger}{z}^{(\ell)}_j \hspace{2mm} \text{with } ~  {z}^{(\ell)}_j=(R_j A R_j^{\dagger})^{-1} R_j A u^{(\ell)} - (R_j A R_j^{\dagger})^{-1} R_jb .
		\end{align*}
		The key observation is that each update $z^{(\ell)}_j\in V_j$ defined above can be viewed as the minimizer of an energy functional, i.e.,
		\begin{align*}
			z^{(\ell)}_j =& \underset{y_j \in V_j}{\text{argmin}} \frac{1}{2}\big(R_j A R_j^{\dagger} y_j, y_j\big)_V - \big(R_jA u^{(\ell)}- R_jb, y_j\big)_V=\underset{y_j \in V_j}{\text{argmin}}\; \mathcal{R}(u^{(\ell)}- R_j^{\dagger}y_j),\\
			\text{so that }& ~u^{(\ell+1)}= \omega_0u^{(\ell)} - \sum_{j=1}^d \omega _jR_j^{\dagger}\;\underset{y_j \in V_j}{\text{argmin}}\; \mathcal{R}(u^{(\ell)}- R_j^{\dagger}y_j).
		\end{align*}
		
		Using now a bit of algebra together with the multi-linearity of the tensor contraction mapping and known properties of the retraction operators \cite[Lemma 3.2]{holtz2012alternating}, we can deduce that one global iteration of the one-site A2DMRG algorithm \ref{alg:PALS} with initialization $\tau(\bold{U}^{(\ell)}) \in \mathcal{T}_{\bold{r}}$ corresponds to a generalization of the above additive Schwarz iteration where the quadratic energy functional $\mathcal{R}$ is replaced with a general functional $\mathcal{J}\colon U \subset \R^{n_1 \times \ldots n_d}\rightarrow \R$ and 
		\begin{align*}
			V&=\bold{T}_{\tau(\bold{U}^{(\ell)})}\mathcal{T}_{\bold{r}}, \quad \text{i.e., the tangent space at $\tau(\bold{U}^{(\ell)})$ to $\mathcal{T}_{\bold{r}}$},\\
			R_j^{\dagger}&= \mathbb{P}_{\bold{U}^{(\ell)}, j, 1}, \qquad \text{and} \qquad V_j= \mathbb{R}^{r_{j-1}\times n_j \times r_j},\\
			R_j^{\dagger}V_j&=\left\{\tau({\bold{U}}^{(\ell), j})\colon {\bold{U}}^{(\ell), j}= (\bold{U}^{(\ell)}_1, \ldots, \bold{U}_{j-1}^{(\ell)}, \bold{V}_j, \bold{U}_{j+1}^{(\ell)}, \ldots \bold{U}_d^{(\ell)}), ~ \bold{V}_j \in \R^{r_{j-1}\times n_j \times r_j}\right\},\\[0.5em]
			&\{\omega_j\}_{j=0}^d  \hspace{2mm}\text{ is determined by the second-level minimization}.
		\end{align*}
		A closer study of this connection as well as a convergence analysis of A2DMRG is work in progress.
	\end{remark}
}

\subsection{The Second-Level Minimization in the A2DMRG \Cref{alg:PALS}}\label{sec:3.1}~

An essential feature of the A2DMRG \Cref{alg:PALS} is the solution of a minimization problem of dimension at most $d+1 \in \mathbb{N}$ for the one-site case and $d\in \mathbb{N}$ for the two-site case (recall that $d$ is the order of the underlying tensor space). This minimization problem yields an optimal linear combination of the $d$ (respectively, $d-1$) tensor train decompositions arising from the first-level one-site (respectively, two-site) DMRG micro-iterations together with the previous global iterate. We now discuss the resolution of this second-level minimization problem for Rayleigh quotient-based energy functionals $\mathcal{J}\colon U \subset \mathbb{R}^{n_1 \times \ldots \times n_d} \rightarrow \mathbb{R}$ of the form \eqref{eq:energy_eig}, i.e., 
\begin{align*}
	\forall \bold{X}\in  U=\mathbb{R}^{n_1 \times n_2 \times \ldots n_d}\setminus \{0\}\colon \qquad        \mathcal{J}(\bold{X})= \frac{\langle \bold{X}, \bold{A}\bold{X}\rangle}{\Vert \bold{X}\Vert^2},
\end{align*}
where $\bold{A}\colon \mathbb{R}^{n_1 \times n_2 \times \ldots n_d} \rightarrow \mathbb{R}^{n_1 \times n_2 \times \ldots n_d}$ is a symmetric operator with a simple lowest eigenvalue. For simplicity, we assume we are in the one-site setting (the two-site being identical up to a modification of indices). In this case, the second-level minimization problem \eqref{eq:d_optim} for the $n^{\rm th}$ global iteration takes the form
\begin{align}\label{eq:d_opt_lin_3}
	\underset{0\neq \bold{X} \in \mathcal{Y}_n}{\min}\; \frac{\langle \bold{X}, \bold{A}\bold{X}\rangle}{\Vert \bold{X}\Vert^2} \qquad \text{where }~ \mathcal{Y}_n := \text{span}\Big\{\tau\big(\widetilde{\bold{U}}^{(n+1), i}\big)\colon \quad i \in \{0, \ldots, d\}\Big\}.
\end{align}
Using the fact that the tensors $\{\tau(\widetilde{\bold{U}}^{(n+1), i})\}_{i=0}^d$ span the space $\mathcal{Y}_n$ by definition, we can express the minimization problem \eqref{eq:d_opt_lin_3} as a matrix problem of the form
\begin{align}\label{eq:d_opt_lin_4}
	\underset{\substack{\bold{c}=(c_0, \ldots, c_d) \in \mathbb{R}^{d+1}\\[0.5em] {\sum_{i=0}^d c_i\tau(\widetilde{\bold{U}}^{(n+1), i})}\neq 0}}{\min}\; \frac{\langle \bold{c}, \widehat{\bold{A}}\bold{c}\rangle}{\langle \bold{c}, \widehat{\bold{S}}\bold{c}\rangle}.
\end{align}
Here, the symmetric matrix $\widehat{\bold{A}} \in \mathbb{R}^{d+1 \times d+1}$ and overlap matrix $\widehat{\bold{S}}\in \mathbb{R}^{d+1\times d+1}$ are
\begin{align}\label{eq:A_reduced}
	\forall i, j \in \{1, \ldots, d+1\}\colon \qquad [\widehat{\bold{A}} ]_{i, j}&:= \big\langle \tau(\widetilde{\bold{U}}^{(n+1), {i-1}}), \bold{A}\;\tau(\widetilde{\bold{U}}^{(n+1), {j-1}})\big\rangle\\ \nonumber
	\forall i, j \in \{1, \ldots, d+1\}\colon \qquad [\widehat{\bold{S}} ]_{i, j}&:= \big\langle \tau(\widetilde{\bold{U}}^{(n+1), {i-1}}),\tau(\widetilde{\bold{U}}^{(n+1), {j-1}})\big\rangle.
\end{align}
Since $\widehat{\bold{S}}$ is symmetric, we can introduce the eigendecomposition $\widehat{\bold{S}}= \widehat{\bold{V}}_{\bold{S}}\widehat{\bold{\Sigma}}_{\bold{S}} \widehat{\bold{V}}_{\bold{S}}^*$ and deduce that
\begin{align*}
	\underset{\substack{\bold{c}=(c_0, \ldots, c_d) \in \mathbb{R}^{d+1}\\[0.5em] {\sum_{i=0}^d c_i\tau(\widetilde{\bold{U}}^{(n+1), i})}\neq 0}}{\min}\; \frac{\langle \bold{c}, \widehat{\bold{A}}\bold{c}\rangle}{\langle \bold{c}, \widehat{\bold{S}}\bold{c}\rangle} &=   \underset{\substack{\bold{c}=(c_0, \ldots, c_d) \in \mathbb{R}^{d+1}\\[0.5em] {\sum_{i=0}^d c_i\tau(\widetilde{\bold{U}}^{(n+1), i})}\neq 0}}{\min}\; \frac{\langle \widehat{\bold{V}}_{\bold{S}}^*\bold{c}, \widehat{\bold{V}}_{\bold{S}}^*\widehat{\bold{A}}\widehat{\bold{V}}_{\bold{S}}\widehat{\bold{V}}_{\bold{S}}^*\bold{c}\rangle}{\langle \widehat{\bold{V}}_{\bold{S}}^*\bold{c}, \widehat{\bold{\Sigma}}_{\bold{S}} \widehat{\bold{V}}_{\bold{S}}^*\bold{c}\rangle}.
\end{align*}
If the matrix $\widehat{\bold{\Sigma}}_{\bold{S}}$ is well-conditioned, then we further deduce that 
\begin{align*}
	\underset{\substack{\bold{c}=(c_0, \ldots, c_d) \in \mathbb{R}^{d+1}\\[0.5em] {\sum_{i=0}^d c_i\tau(\widetilde{\bold{U}}^{(n+1), i})}\neq 0}}{\min}\; \frac{\langle \widehat{\bold{V}}_{\bold{S}}^*\bold{c}, \widehat{\bold{U}}_{\bold{S}}^*\widehat{\bold{A}}\widehat{\bold{V}}_{\bold{S}}\widehat{\bold{V}}_{\bold{S}}^*\bold{c}\rangle}{\langle \widehat{\bold{V}}_{\bold{S}}^*\bold{c}, \widehat{\bold{\Sigma}}_{\bold{S}} \widehat{\bold{V}}_{\bold{S}}^*\bold{c}\rangle} &= \underset{0\neq \widehat{\bold{V}}_{\bold{S}}^*\bold{c}\in \mathbb{R}^{d+1}}{\min}\; \frac{\langle \widehat{\bold{V}}_{\bold{S}}^*\bold{c}, \widehat{\bold{V}}_{\bold{S}}^*\widehat{\bold{A}}\widehat{\bold{V}}_{\bold{S}}\widehat{\bold{V}}_{\bold{S}}^*\bold{c}\rangle}{\langle \widehat{\bold{V}}_{\bold{S}}^*\bold{c}, \widehat{\bold{\Sigma}}_{\bold{S}} \widehat{\bold{V}}_{\bold{S}}^*\bold{c}\rangle}\\
	&=\underset{0\neq \widehat{\bold{c}}\in \mathbb{R}^{d+1}}{\min}\; \frac{\langle \widehat{\bold{c}}, \widehat{\bold{V}}_{\bold{S}}^*\widehat{\bold{A}}\widehat{\bold{V}}_{\bold{S}}\widehat{\bold{c}}\rangle}{\langle \widehat{\bold{c}}, \widehat{\bold{\Sigma}}_{\bold{S}} \widehat{\bold{c}}\rangle},
\end{align*}
where the second equality is due to the fact that the eigenvectors of $\widehat{\bold{S}} $ form an orthonormal basis of $\mathbb{R}^d$. Consequently, any minimizer of Problem \eqref{eq:d_opt_lin_4} is an eigenvector associated with the \emph{lowest} eigenvalue of the eigenvalue problem
\begin{align*}
	\big(\widehat{\bold{\Sigma}}_{\bold{S}}^{-1/2}\widehat{\bold{V}}_{\bold{S}}^*\widehat{\bold{A}}\widehat{\bold{V}}_{\bold{S}}\widehat{\bold{\Sigma}}_{\bold{S}}^{-1/2}\big)\widehat{\bold{\Sigma}}_{\bold{S}}^{1/2}\widehat{\bold{c}}= \lambda \widehat{\bold{\Sigma}}_{\bold{S}}^{1/2}\widehat{\bold{c}}.
\end{align*}
Thus, the minimizer of the second-level minimization problem \eqref{eq:d_opt_lin_3} is given by $\sum_{i=0}^d c_i^*\tau\big(\widetilde{\bold{U}}^{(n+1), i}\big)$ with the vector $\bold{c}^*= (c_0^*, \ldots, c_d^*)$ being obtained by computing the lowest eigenpair of a generalized eigenvalue problem involving the matrix $\widehat{\bold{A}}\in \mathbb{R}^{d+1 \times d+1}$ and the eigendecomposition factors of the overlap matrix $\widehat{\bold{S}}$.

If, on the other hand, the diagonal matrix $\widehat{\bold{\Sigma}}_{\bold{S}}$ is rank-deficient, say, of rank $p < d+1$, then we can decompose $\widehat{\bold{\Sigma}}_{\bold{S}}$ and $\widehat{\bold{U}}_{\bold{S}}$ as
\begin{align*}
	\widehat{\bold{\Sigma}}_{\bold{S}} = \begin{bmatrix}
		\widehat{\bold{\Sigma}}_{\bold{S}}^+ & 0\\
		0 & 0
	\end{bmatrix}, \hspace{1.5mm} \widehat{\bold{U}}_{\bold{S}}= \begin{bmatrix} \widehat{\bold{V}}_{\bold{S}}^+ & \widehat{\bold{V}}_{\bold{S}}^0\end{bmatrix} ~ \text{ with } \widehat{\bold{\Sigma}}_{\bold{S}}^+\in \mathbb{R}^{p \times p} \text{ full-rank and } \widehat{\bold{V}}_{\bold{S}}^+ \in \mathbb{R}^{d+1 \times p}.
\end{align*}
Using now the matrix $\widehat{\bold{V}}_{\bold{S}}^+ \in \mathbb{R}^{d+1 \times p}$, we can construct the following $p$-dimensional orthogonal basis for the approximation space $\mathcal{Y}_n$:
\begin{align*}
	\left\{ \sum_{j=0}^{d} [\widehat{\bold{V}}_{\bold{S}}^+]_{j+1,k} \;\tau(\widetilde{\bold{U}}^{(n+1), j}) \colon \quad k\in \{1, \ldots, p\}  \right\}.
\end{align*}
Consequently, the second-level minimization problem \eqref{eq:d_opt_lin_3} can be expressed as
\begin{align}\label{eq:d_opt_lin_5}
	\underset{0\neq \bold{X} \in \mathcal{Y}_n}{\min}\; \frac{\langle \bold{X}, \bold{A}\bold{X}\rangle}{\Vert \bold{X}\Vert} =  \underset{\substack{\bold{b}=(b_1, \ldots, b_p) \in \mathbb{R}^p\\ \bold{b}\neq 0}}{\min}\; \frac{\langle \bold{b}, (\widehat{\bold{V}}_{\bold{S}}^+)^*\widehat{\bold{A}}\widehat{\bold{V}}_{\bold{S}}^+ \bold{b}\rangle}{\langle \bold{b}, \bold{\Sigma}_{\bold{S}}^+\bold{b}\rangle}.
\end{align}
Any minimizer of Problem \eqref{eq:d_opt_lin_5} is an eigenvector associated with the \emph{lowest} eigenvalue of the eigenvalue problem
\begin{align}\label{eq:aux_eig_final_sym}
	\big( (\widehat{\bold{\Sigma}}_{\bold{S}}^+)^{-1/2} (\widehat{\bold{V}}_{\bold{S}}^+)^*\widehat{\bold{A}}\widehat{\bold{V}}_{\bold{S}}(\widehat{\bold{\Sigma}}_{\bold{S}}^+)^{-1/2}\big)(\widehat{\bold{\Sigma}}_{\bold{S}}^+)^{1/2} \widehat{\bold{b}}= \lambda (\widehat{\bold{\Sigma}}_{\bold{S}}^+)^{1/2}\widehat{\bold{b}}.
\end{align}
Thus, the sought minimizer of the second-level minimization problem \eqref{eq:d_opt_lin_3} is given by
\begin{align*}
	\sum_{k=1}^p \widehat{b}^*_k \sum_{j=0}^d [\widehat{\bold{V}}_{\bold{S}}^+]_{j+1, k} \; \tau(\widetilde{\bold{Y}}^{(n+1), j}), 
\end{align*}
with the vector $\widehat{\bold{b}}^*= (b^*_1, \ldots, b^*_p)$ being obtained once again by computing the lowest eigenpair of an eigenvalue problem involving the matrix $\widehat{\bold{A}}\in \mathbb{R}^{d+1 \times d+1}$ and the eigendecomposition factors of the overlap matrix $\widehat{\bold{S}}$. We remark that in the case when the diagonal matrix $\widehat{\bold{\Sigma}}_{\bold{S}}$ is ill-conditioned but not rank-deficient, we can similarly decompose $\widehat{\bold{\Sigma}}_{\bold{S}}$ and $\widehat{\bold{U}}_{\bold{S}}$ using a tolerance parameter $\epsilon >0$.

We end this section by remarking that very similar arguments can be used to solve the second-level minimization problem \eqref{eq:d_optim} for quadratic energy functionals $\mathcal{J}\colon U \subset \mathbb{R}^{n_1 \times \ldots \times n_d} \rightarrow \mathbb{R}$ of the form \eqref{eq:energy_lin}, i.e., if $\mathcal{J}$ is defined as
\begin{align}
	\forall \bold{X}\in  U=\mathbb{R}^{n_1 \times n_2 \times \ldots n_d}\colon \qquad        \mathcal{J}(\bold{X})= \frac{1}{2} \langle \bold{X}, \bold{A}\bold{X}\rangle- \langle \bold{F}, \bold{X}\rangle,
\end{align}
with $\bold{A}\colon \mathbb{R}^{n_1 \times n_2 \times \ldots n_d} \rightarrow \mathbb{R}^{n_1 \times n_2 \times \ldots n_d}$ symmetric and positive and $\bold{F} \in \mathbb{R}^{n_1 \times n_2 \times \ldots n_d}$.
{In this case, the local updates of A2DMRG (Step 2 in Algorithm \ref{alg:PALS}) will consist of solving linear systems involving an orthogonal projection of $\bold{A}$ (see, e.g., \cite{holtz2012alternating} for details), while the second level minimization (Step 3 in Algorithm \ref{alg:PALS}) will consist of the solution to a linear system involving the projection of $\bold{A}$ on the coarse $d+1$ or $d$-dimensional space spanned by the local updates.}

\subsection{Application to Ground State Calculations in Quantum Chemistry}\label{sec:3.2}

A fundamental problem in computational quantum chemistry is the numerical approximation of the lowest eigenvalue (or so-called ground state energy) of the electronic Hamiltonian-- a self-adjoint operator acting on an $L^2$-type Hilbert space of antisymmetric functions. In this section, we briefly recall how this energy minimization problem can be reformulated using the second quantization formalism to fit the framework of tensor train approximations described in Section \ref{sec:2}. Subsequently, in Section \ref{sec:4}, we will demonstrate the performance of the A2DMRG \Cref{alg:PALS} for the resolution of this energy minimization problem. For simplicity, our exposition ignores the spin variable but all numerical tests in Section \ref{sec:4} do, in fact, include spin.

We begin by considering a finite collection of $L^2(\R^3)$-orthonormal basis functions $\{\phi_\mu\}_{\mu=1}^d \subset H^1(\R^3)$ and we introduce, for each $\ell \in \{1, \ldots, d\}$, the set $\mathcal{B}_{\ell}$ of antisymmetric functions defined as
\begin{align*}
	\mathcal{B}_{\ell}:=& \left\{\Phi \in H^1(\R^{3\ell})\colon \Phi:= \phi_{\mu_1} \wedge \phi_{\mu_2} \ldots \wedge \ldots \phi_{\mu_{\ell}} \colon 1\leq \mu_{1} < \mu_{2} < \ldots  < \mu_{\ell}\leq d \right\}.
\end{align*}
Here, $\wedge$ denotes the antisymmetric tensor product (see, e.g., \cite[Chapter 1]{rohwedder2010analysis}) so that
\begin{align*}
	\Phi= \phi_{\mu_1} \wedge \ldots \wedge  \phi_{\mu_{\ell}} \iff \Phi(\bold{x}_1, \ldots, \bold{x}_\ell)=\frac{1}{\sqrt{\ell!}} \sum_{\pi \in {\mS}(\ell)} (-1)^{\rm sgn(\pi)} \overset{\ell}{\underset{i=1}{\otimes}} \phi_{\mu_i}(\bold{x}_{\pi(i)}), 
\end{align*}
with ${\mS}(\ell)$ the permutation group of order $\ell$ and ${\rm sgn(\pi)} $  the signature of $\pi~\in~{\mS}(\ell)$. 

Denoting now $\mathcal{V}_{\ell}:=\text{\rm span}\mathcal{B}_{\ell}$ for all $\ell \in \{1, \ldots, d\}$ and setting $\mathcal{V}_0 = \mathbb{R}$ by convention, we can introduce the so-called Fermionic \textit{Fock space} as the $2^d$-dimensional Hilbert space given by
\begin{align}
	\mathcal{F}_d:= \bigoplus_{\ell=0}^d \mathcal{V}_{\ell}, \qquad \text{equipped with the natural ~$\bigoplus_{\ell=0}^d L^2(\R^{3\ell})$ norm.}
\end{align}

Of particular importance are the so-called fermionic creation and annihilation operators acting on the Fock space $\mathcal{F}_d$ (see, e.g, \cite[Chapter 1]{helgaker2013molecular} for further details). For any $\ell \in \{1, \ldots, d\}$, the fermionic creation operator $a_{\ell}^{\dagger}\colon \mathcal{F}_{d}\rightarrow \mathcal{F}_d$ is defined as
\begin{align*}
	\forall \Phi \in \mathcal{F}_d \colon \qquad a_{\ell}^{\dagger} \Phi := \Phi \wedge \phi_\ell,
\end{align*}
The fermionic annihilation operator $a_{\ell}\colon \mathcal{F}_{d}\rightarrow \mathcal{F}_d$ is then the $\overset{d}{\underset{\ell=0}{\bigoplus}} L^2(\R^{3\ell})$ adjoint of $a_{\ell}^{\dagger}$. 

Given now a molecular system composed of $M$ nuclei with charges $\{Z_{\alpha}\}_{\alpha=1}^M$ and nuclear positions $\{R_{\alpha}\}_{\alpha=1}^M \subset R^{3}$, the corresponding \textit{second quantized} electronic Hamiltonian $\mathcal{H} \colon \mathcal{F}_d \rightarrow \mathcal{F}_d$ is given by
\begin{align*}
	\mathcal{H} &= \sum_{i,j =1}^d {h}_{ij}a_i^{\dagger} a_j +  {\frac{1}{2}}\sum_{i,j, k, \ell =1}^d {V}_{ijk \ell}a_i^{\dagger} a_j^\dagger a_\ell a_{k},
	\intertext{where, for all $i, j, k, \ell \in \{1, \ldots, d\}$, it holds that}
	{h}_{ij}:= &{\frac{1}{2}}\int_{\R^3} \nabla \phi_i(\bold{x}) \cdot \nabla \phi_j(\bold{x})\; d \bold{x} -    \int_{\R^3} \sum_{\alpha=1}^M \frac{Z_{\alpha}}{\vert R_{\alpha}-\bold{x}\vert}\phi_i(\bold{x}) \phi_j(\bold{x})\; d \bold{x},\\[1em]
	\quad {V}_{ijk\ell}:= & \int_{\R^3}\int_{\R^3} \frac{\phi_i(\bold{x}) \phi_j(\bold{y}) \phi_{k}(\bold{x}) \phi_{\ell}(\bold{y})}{\vert \bold{x}-\bold{y}\vert}\; d \bold{x} d\bold{y}.
\end{align*}

The ground-state problem for a molecular system composed precisely of $N$ electrons consists of finding the lowest eigenvalue of the second quantized Hamiltonian $\mathcal{H}$, subject to the constraint that the corresponding eigenvector describes an $N$-electron system. This constraint can be imposed by requiring that the sought energy minimizer be an eigenfunction of the so-called particle number operator $\mathcal{N}= \sum_{j=1}^d a_j^{\dagger}a_j\colon \mathcal{F}_d \rightarrow \mathcal{F}_d$ with eigenvalue precisely $N$. In other words, we seek
\begin{align}\label{eq:GS_problem}
	\mathcal{E}_{\rm GS} = \min_{0\neq\Phi \in \mathcal{F}_d} \frac{\langle \Phi, \mathcal{H}\Phi\rangle_{\mathcal{F}_d}}{\langle \Phi, \Phi\rangle_{\mathcal{F}_d} } \quad \text{subject to the constraint } \mathcal{N}\Phi:= \sum_{j=1}^d a_j^{\dagger}a_j \Phi= N\Phi.
\end{align}
{The key insight that now allows for the numerical resolution of the minimization problem \eqref{eq:GS_problem} in tensor format is that
	\begin{align*}
		\exists \text{ a unitary isomorphism } \iota \text{ from the Fock space } \mathcal{F}_d ~\text{ to the tensor space }  \mathbb{R}^{2^d}.  
	\end{align*}
	Consequently, the minimization problem \eqref{eq:GS_problem}, which is a priori posed on the Fock space $\mathcal{F}_d$ can be reformulated over the space $\mathbb{R}^{2^d}=\bigotimes_{j=1}^d \mathbb{R}^2$ of order-$d$ tensors:
	\begin{align}\label{eq:GS_problem_tensor}
		\mathcal{E}_{\rm GS} = \min_{0\neq \bold{X} \in \mathbb{R}^{2^d}} \frac{\langle \bold{X}, \iota \circ\mathcal{H} \circ\iota^{-1}\bold{X}\rangle}{\langle \bold{X}, \bold{X}\rangle} \qquad \text{subject to the constraint } \iota \circ\mathcal{N} \circ\iota^{-1}\bold{X}= N \bold{X}.
	\end{align}
	We refer to, e.g., \cite{faulstich2019analysis, holtz2012alternating} for more details.

	The minimization problem \eqref{eq:GS_problem_tensor} can now be approximated using the tensor train format and alternating minimization algorithms described in detail in Sections \ref{sec:2} and~\ref{sec:3}. Note, however, that these sections did not include any discussion on the imposition of the particle number constraint. Let us, therefore, briefly describe how this additional constraint can be dealt with in the tensor train format.
	
	It has been shown by Bachmayr, Götte, and Pfeffer \cite{bachmayr2022particle} that eigenfunctions of the particle number operator $\iota \circ \mathcal{N}\circ \iota^{-1} \rightarrow  \mathbb{R}^{2^d} \times \mathbb{R}^{2^d}$ are characterized by a block-sparse structure in their tensor-train decomposition. Therefore, to approximate the minimization problem \eqref{eq:GS_problem_tensor} on a tensor manifold $\mathcal{T}_{\bold{r}}, \bold{r}\in \mathbb{N}^{d-1}$ using, e.g., classical one-site DMRG, we first initialize the algorithm with some $\bold{U}^{(0)} \in \mathcal{U}_\bold{r}$ that possess the requisite block-sparse structure. This guarantees that $\tau(\bold{U}^{(0)})$ is an eigenfunction of $\iota \circ \mathcal{N}\circ \iota^{-1}$ with eigenvalue $N$. Next, assume that we are performing micro-iteration $k +1$ of the one-site DMRG algorithm, and that the previous iterate $\bold{U}^{(k)} \in \mathcal{U}_{\bold{r}}$ is $k+1$-orthogonal and possesses the requisite block-sparse structure. As an alternative to Lemma \ref{lem:ALS_micro}, we can characterize the $k+1$ DMRG micro-iterate $\bold{U}^{(k+1)} \in \mathcal{U}_{\bold{r}}$ through projection operators on the tangent space $\mathbb{T}_{\tau(\bold{U}^{(k)})} \mathcal{T}_{\bold{r}}$. 
	
	To this end, for each $j \in \{1, \ldots, d \}$, let us denote by $\bold{U}^{(k)}_{\leq j}  \in \mathbb{R}^{n_1\ldots n_{i} \times r_i}$ and $\bold{U}^{(k)}_{\geq j}  \in \mathbb{R}^{r_{j-1}\times n_{j}\ldots n_{d}}$ the matrices obtained by contracting and reshaping  the first $j$ TT cores and the last $d-j+1$ TT cores, respectively, of $\bold{U}^{(k)}$. We then introduce, for each $j \in \{1, \ldots, d\}$ the projection operators $\mathbb{P}_{\leq j}, \mathbb{P}_{\geq j}, \mathbb{P}_{j} \colon \mathbb{R}^{n_1 \times \ldots \times n_d} \rightarrow \mathbb{T}_{\tau(\bold{U}^{(k)})} \mathcal{T}_{\bold{r}}$ as
	\begin{align*}
		\mathbb{P}_{\leq j}(\bold{Z})= \text{\rm Ten}_j \big(\bold{U}^{(k)}_{\leq j} \bold{Z}^{\langle j\rangle}\big), \quad \mathbb{P}_{\geq j}(\bold{Z})= \text{\rm Ten}_j \big(\bold{Z}^{\langle j-1\rangle}\bold{U}^{(k)}_{\geq j}\big), \quad \text{and} \quad \mathbb{P}_{j}=\mathbb{P}_{\leq j-1} \mathbb{P}_{\geq j+1}.
	\end{align*}
	Here, $\text{\rm Ten}_j$ is a reshape operation that takes a matrix in $\mathbb{R}^{n_1\ldots n_j \times n_{j+1}\ldots n_d}$ and reshapes it into a tensorial element of $\mathbb{R}^{n_1\times \ldots \times n_d}$, and we adopt the convention that $\mathbb{P}_{\leq 0}= \mathbb{P}_{\geq d+1}=1$. Recalling Lemma \ref{lem:ALS_micro}, it is now possible to show (see, e.g., \cite{bachmayr2022particle}) that
	\begin{align*}
		\tau(\bold{U}^{(k+1)}) \text{ is the lowest eigenfunction of }  \mathbb{P}_{j}\circ \iota \circ\mathcal{H} \circ\iota^{-1} \circ \mathbb{P}_{j}^*.
	\end{align*}
	
	On the other hand, it has been shown in \cite{bachmayr2022particle} that the projector $ \mathbb{P}_{j}$ commutes with the particle number operator $\iota \circ \mathcal{N}\circ \iota^{-1}$, which in turn commutes with the Hamiltonian $\iota \circ \mathcal{H}\circ \iota^{-1}$. It therefore follows that $\tau(\bold{U}^{(k+1)})$ is also an eigenfunction of $\iota \circ \mathcal{N}\circ \iota^{-1}$, and hence, all one-site DMRG micro-iterates satisfy the particle-number constraint. 
	
	A similar argument applies in the case of the two-site DMRG algorithm without rank-truncation. If rank-truncation \textit{is} performed (as in Algorithm \ref{alg:ALS}), then care must be taken in the application of the SVD. Essentially, as explained in \cite[Algorithm 3]{bachmayr2022particle}, the truncated SVD is not applied directly on the relevant TT cores but rather on the individuals non-zero blocks within the block structure of each TT core.
	
	Finally, we turn to the question of particle-number preservation in the A2DMRG \Cref{alg:PALS}. As in the classical DMRG algorithms, we begin with an initialization $\bold{U}^{(0)} \in \mathcal{U}_\bold{r}$ that possesses the requisite block-sparse structure which guarantees that $\tau(\bold{U}^{(0)})$ is an eigenfunction of $\iota \circ \mathcal{N}\circ \iota^{-1}$ with eigenvalue $N$. We then perform $d$ (resp. $d-1$ in the two-site case) parallel DMRG micro-iterations resulting in updated tensors $\{\tau(\bold{V}^{(j)})\}_{j=1}^d$ (resp. $\{\tau(\bold{W}^{(j, j+1)})\}_{j=1}^{d-1}$). As argued above, each DMRG micro-iteration with an appropriately orthogonalized and particle-preserving initialization results in a particle-preserving tensor. Therefore, all tensors $\{\tau(\bold{V}^{(j)})\}_{j=1}^d$ (resp. $\{\tau(\bold{W}^{(j, j+1)})\}_{j=1}^{d-1}$) are eigenfunctions of $\iota \circ \mathcal{N}\circ \iota^{-1}$ with eigenvalue $N$ and so are linear combinations of such tensors. It therefore suffices to consider the effect of the compression step (Step 4 in \Cref{alg:PALS}) which projects a linear combination $c^*_0\tau(\bold{U}^{(0)})+\sum_{j=1}^d c_j^*\tau(\bold{V}^{(j)}), ~ \{c_j^*\}_{j=0}^d \subset \mathbb{R}$ (resp. $d_0^*\tau(\bold{U}^{(0)})+\sum_{j=1}^{d-1}d_j^*\tau(\bold{W}^{(j, j+1)}), ~ \{d_j^*\}_{j=0}^{d-1} \subset \mathbb{R}$) onto some low-rank tensor manifold $\mathcal{T}_{\widetilde{\bold{r}}}$. Such a compression step can be performed using either the particle-preserving version of the TT-rounding algorithm \cite[Algorithm 3]{bachmayr2022particle}, or more-generally, by solving a best-approximation problem using standard tensor train algorithms (such as alternating least squares), which are known to be particle-preserving \cite[Section 6]{bachmayr2022particle}.}

\section{Numerical Experiments}\label{sec:4}

For the purpose of the subsequent numerical study, we consider the tensor formulation of the ground state energy minimization problem \eqref{eq:GS_problem_tensor} in the case of several so-called \textit{strongly correlated} molecular systems. This includes linear chains of hydrogen atoms, i.e., molecular systems of the form H$_{2p}$ for $p \in \{3, 4, 5, 6\}$ as well as the C$_2$ and N$_2$ dimers. The bond lengths in each Hydrogen chain and the C$_2$ dimer are set to one Angstrom while the bond length for the N$_2$ dimer is set to $1.0976$ Angstrom. For each molecular system, the $L^2(\R^3)$-orthonormal single-particle basis $\{\phi_j\}_{j=1}^d$ is obtained by diagonalizing the Hartree-Fock (or Mean-Field) operator in the minimal STO-3G atomic basis. This implies, in particular, the relation $d=4p$ for each $p \in \{3, \ldots, 6\}$ and $d=20$ for C$_2$ and N$_2$ so that our numerical experiments involve tensors of minimal order $d=12$ in the case of H$_{6}$ and maximal order $d=24$ in the case of H$_{12}$. The input data, i.e., the second quantized electronic Hamiltonian, and the reference minimal energies are obtained from the PySCF Python module for quantum chemistry \cite{sun2020recent} while the DMRG simulations themselves are run using an in-house Julia module developed and described in \cite{badreddine2024leveraging}. 

\begin{figure}[h!]
	\centering
	\begin{subfigure}[t]{0.49\textwidth}
		\centering
		\includegraphics[width=\textwidth]{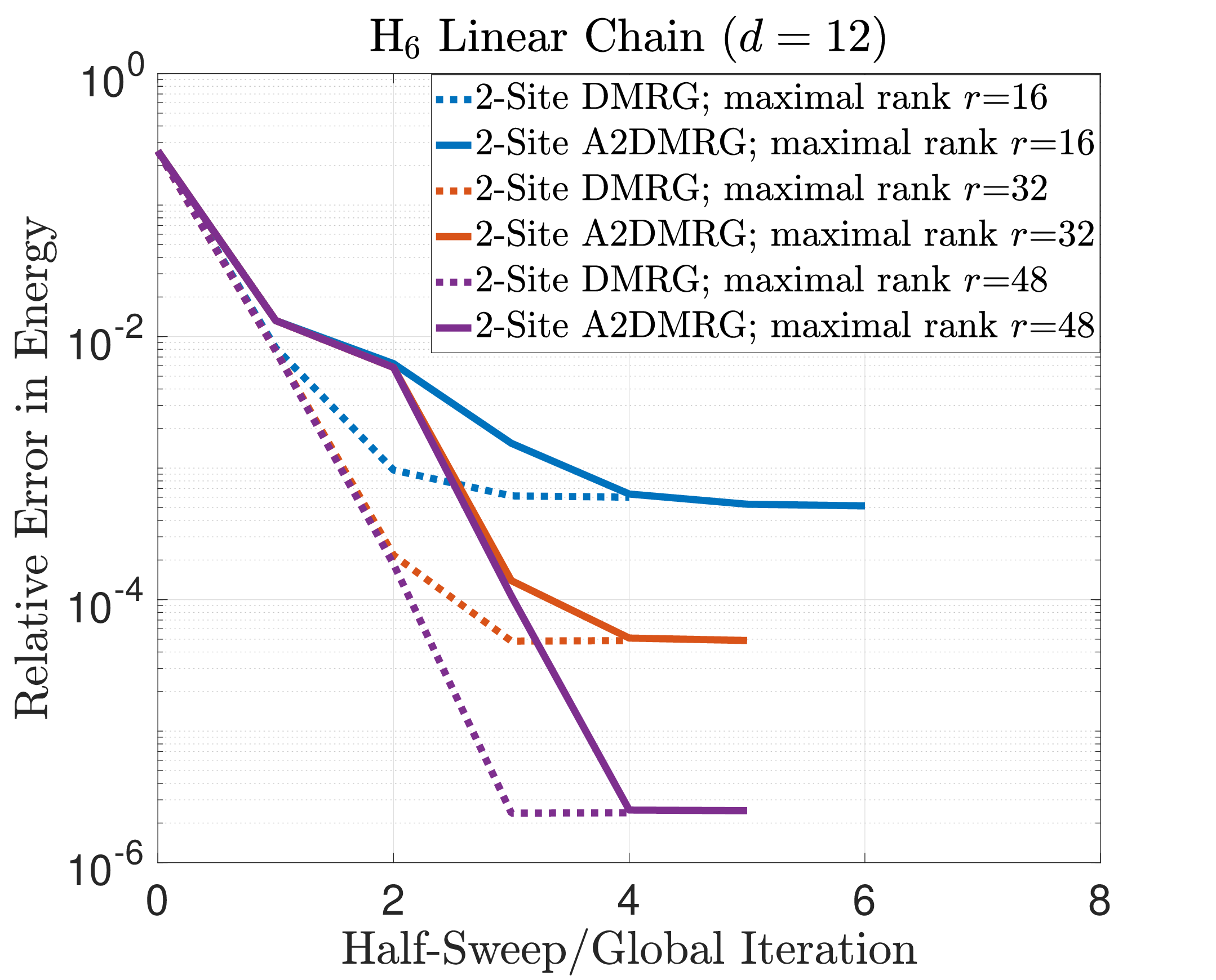} 
	\end{subfigure}\hfill
	\begin{subfigure}[t]{0.49\textwidth}
		\centering
		\includegraphics[width=\textwidth]{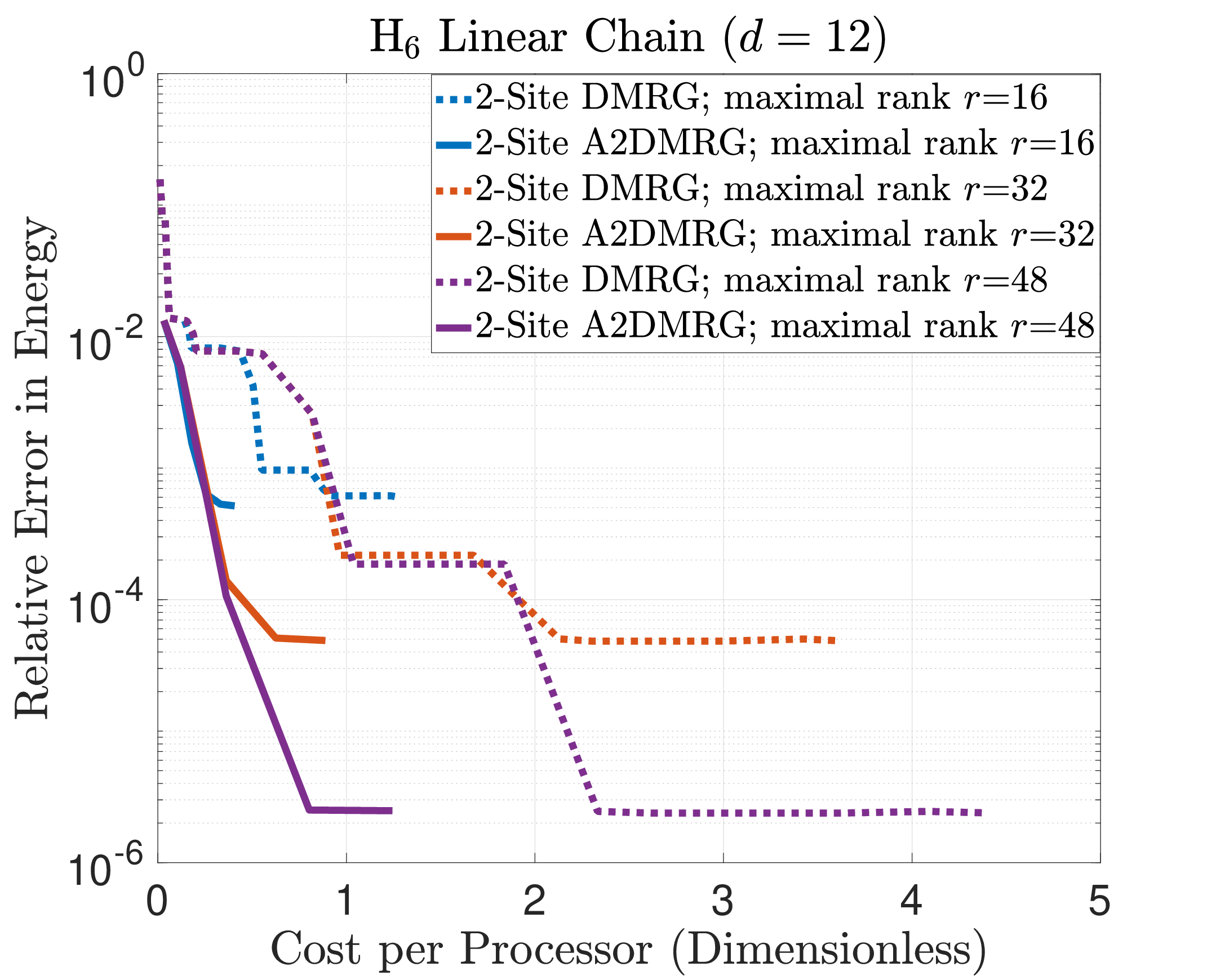} 
	\end{subfigure}\hfill
	\begin{subfigure}[t]{0.49\textwidth}
		\centering
		\includegraphics[width=\textwidth]{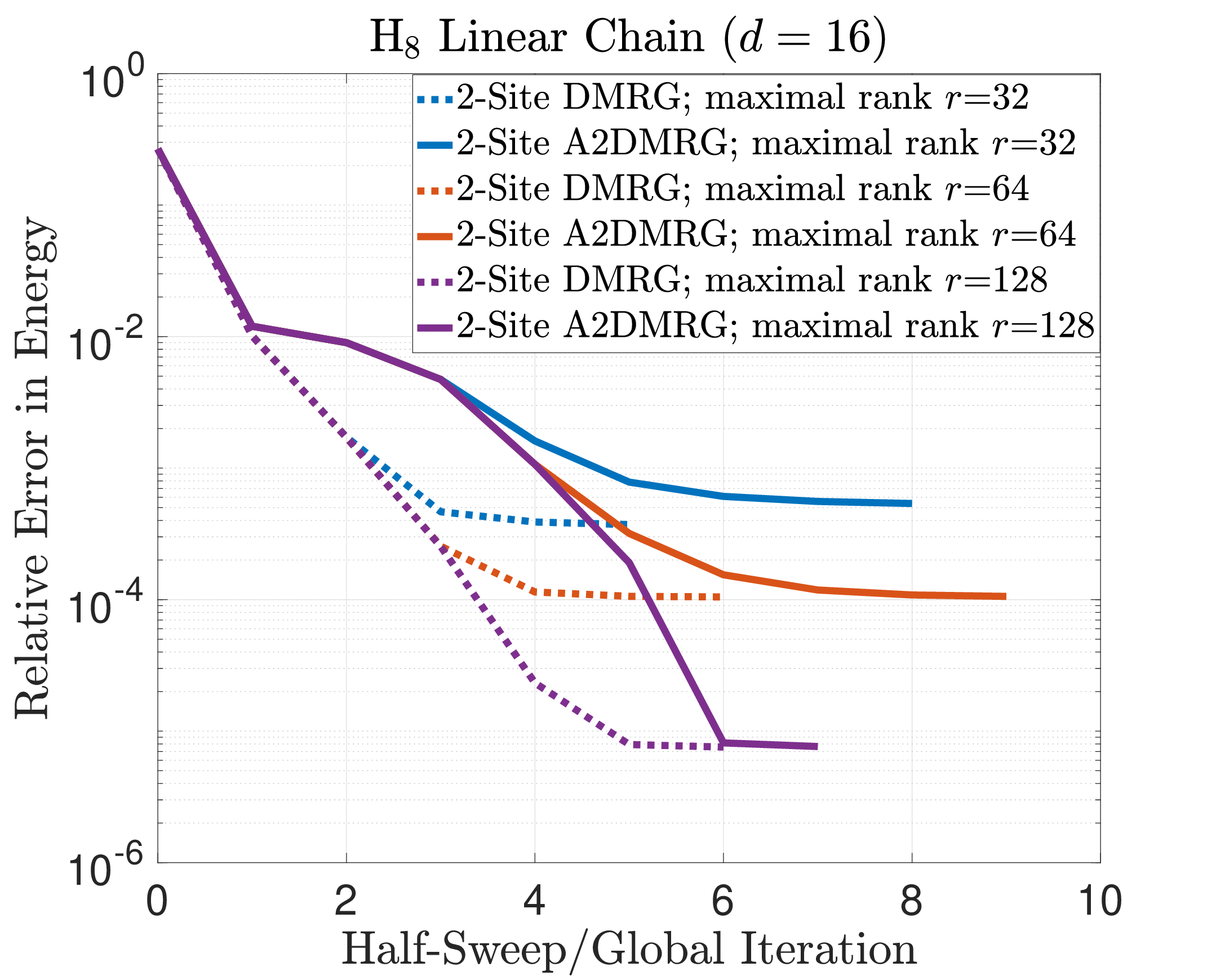} 
	\end{subfigure}\hfill
	\begin{subfigure}[t]{0.49\textwidth}
		\centering
		\includegraphics[width=\textwidth]{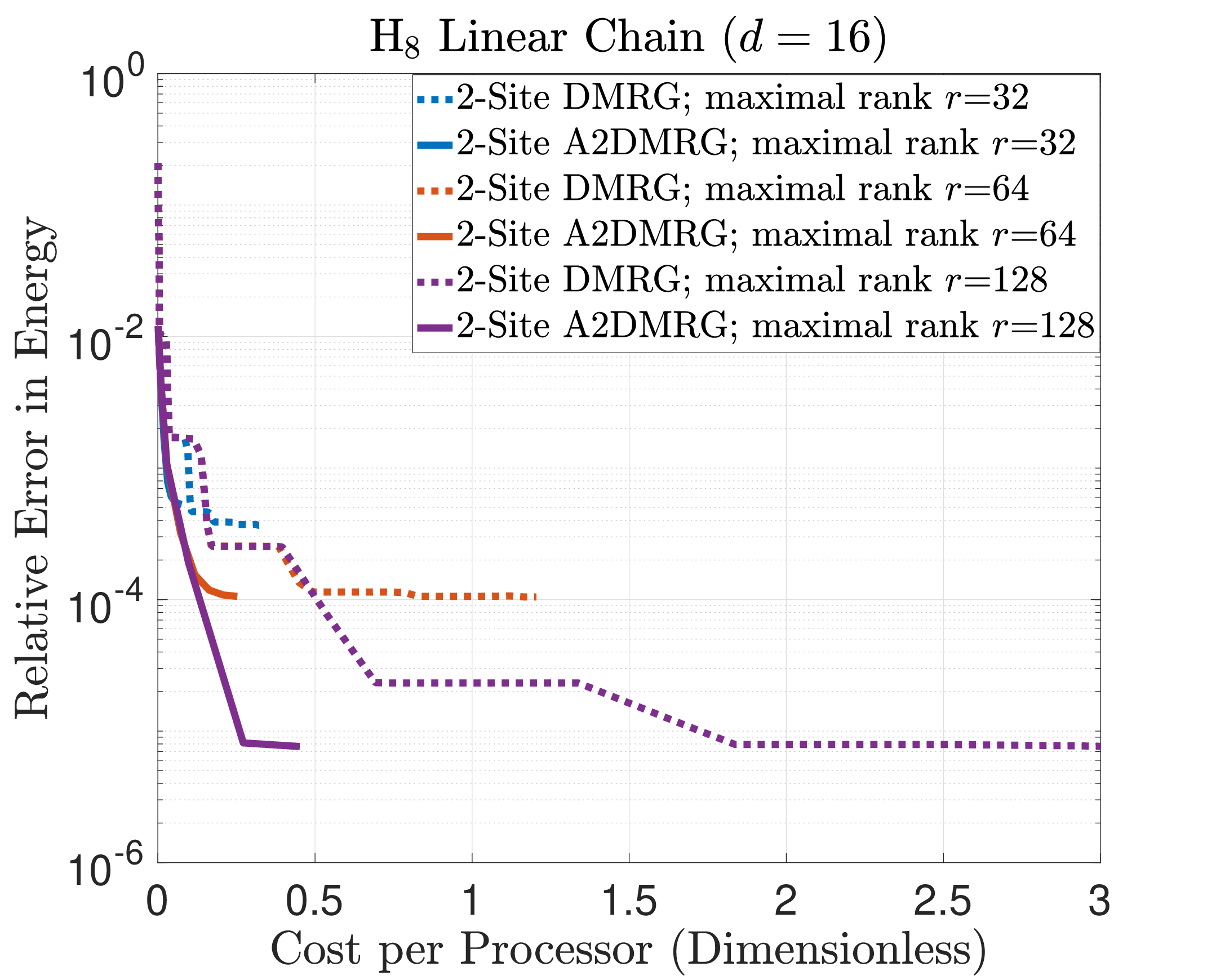} 
	\end{subfigure}
	\caption{Convergence plot of the classical two-site DMRG and two-site A2DMRG algorithms for a linear, stretched H$_{6}$ and H$_8$ chain. The tolerances for the eigensolvers and the truncated SVD were set to $10^{-6}$ and the algorithms were run until the relative energy difference between successive half-sweeps/global iterations was less than $10^{-6}$.}
	\label{fig:1}
\end{figure}

To begin with, we simply implement the two-site DMRG Algorithm \ref{alg:ALS} and the two-site A2DMRG \Cref{alg:PALS} to compute the ground states of these molecular systems, this choice being motivated by the fact that two-site DMRG is, by far, the popular choice in the quantum chemistry community. In order to maintain computational tractability, we impose rank truncation after each DMRG micro-iteration with maximal ranks varying from $\bold{r}= 16$ to $\bold{r}=512$. In all cases, the Lanczos algorithm is used to compute solutions to the DMRG micro-iterations and the second-level minimization problem in Algorithm \eqref{alg:PALS} with initialization vectors provided by the converged Lanczos results from the previous global iterations. Both the classical and additive DMRG algorithms are initiated with identical random initializations possessing very small rank parameters.

\begin{figure}[t]
	\centering
	\begin{subfigure}[t]{0.49\textwidth}
		\centering
		\includegraphics[width=\textwidth]{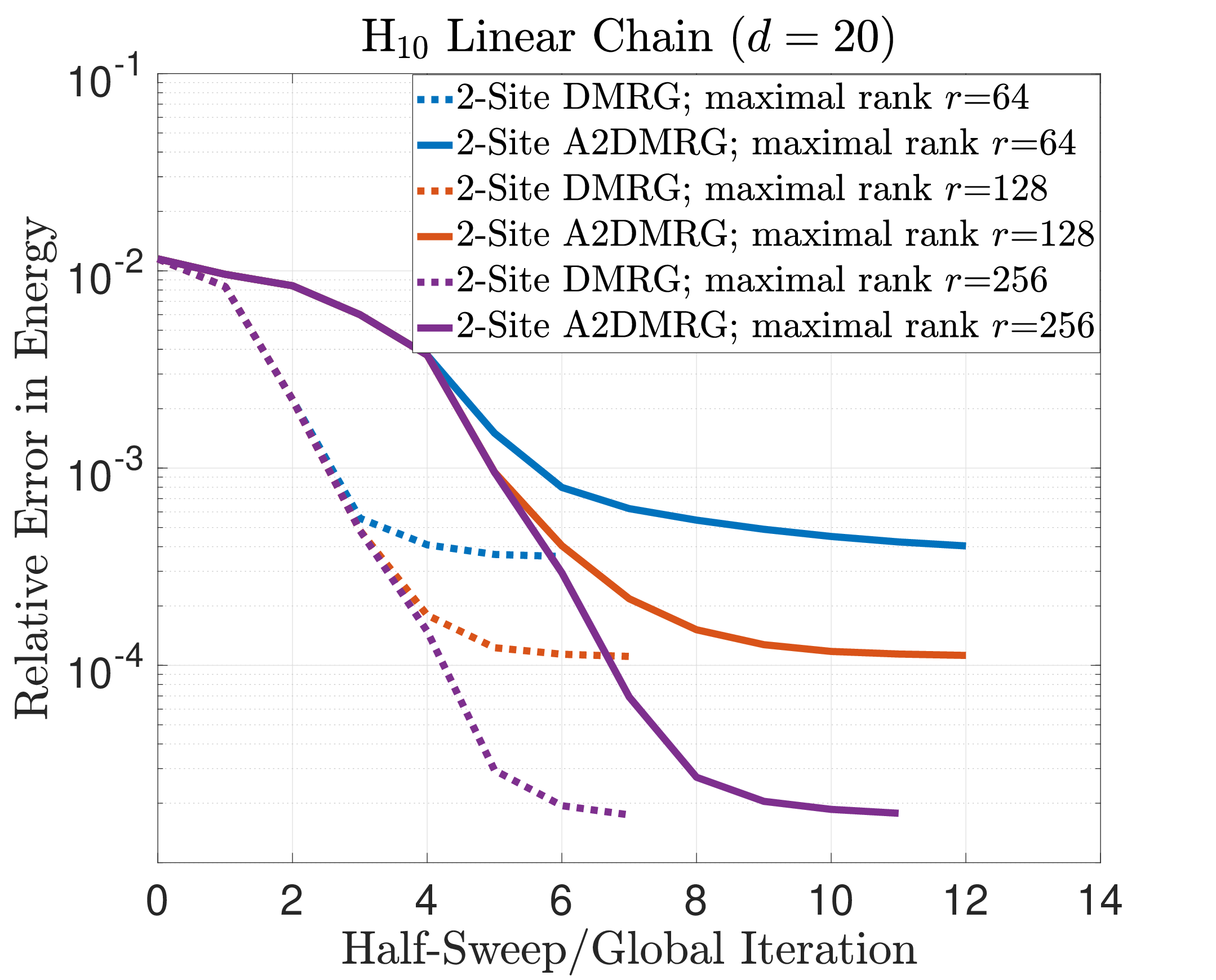} 
	\end{subfigure}\hfill
	\begin{subfigure}[t]{0.49\textwidth}
		\centering
		\includegraphics[width=\textwidth]{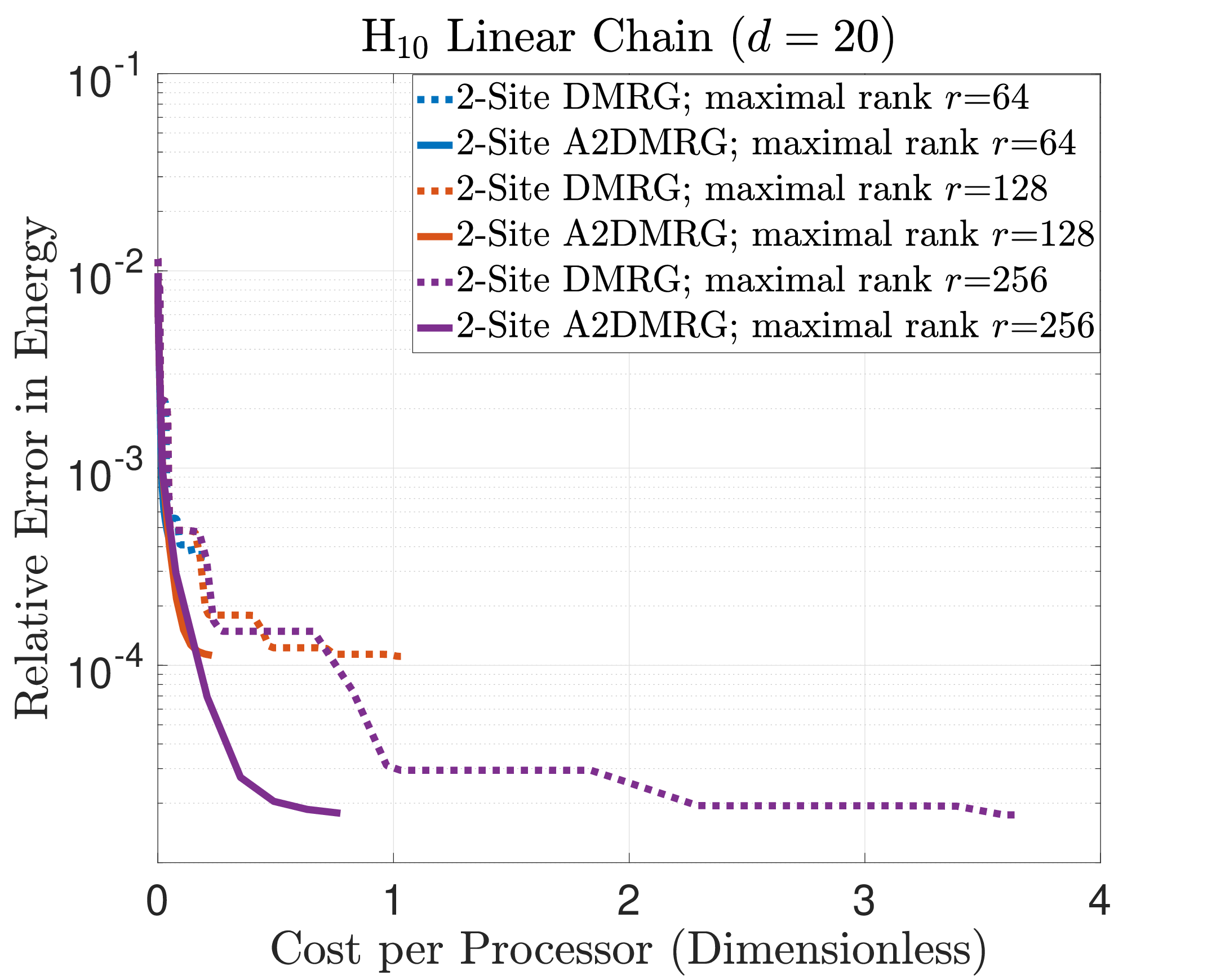} 
	\end{subfigure}\hfill
	\centering
	\begin{subfigure}[t]{0.49\textwidth}
		\centering
		\includegraphics[width=\textwidth]{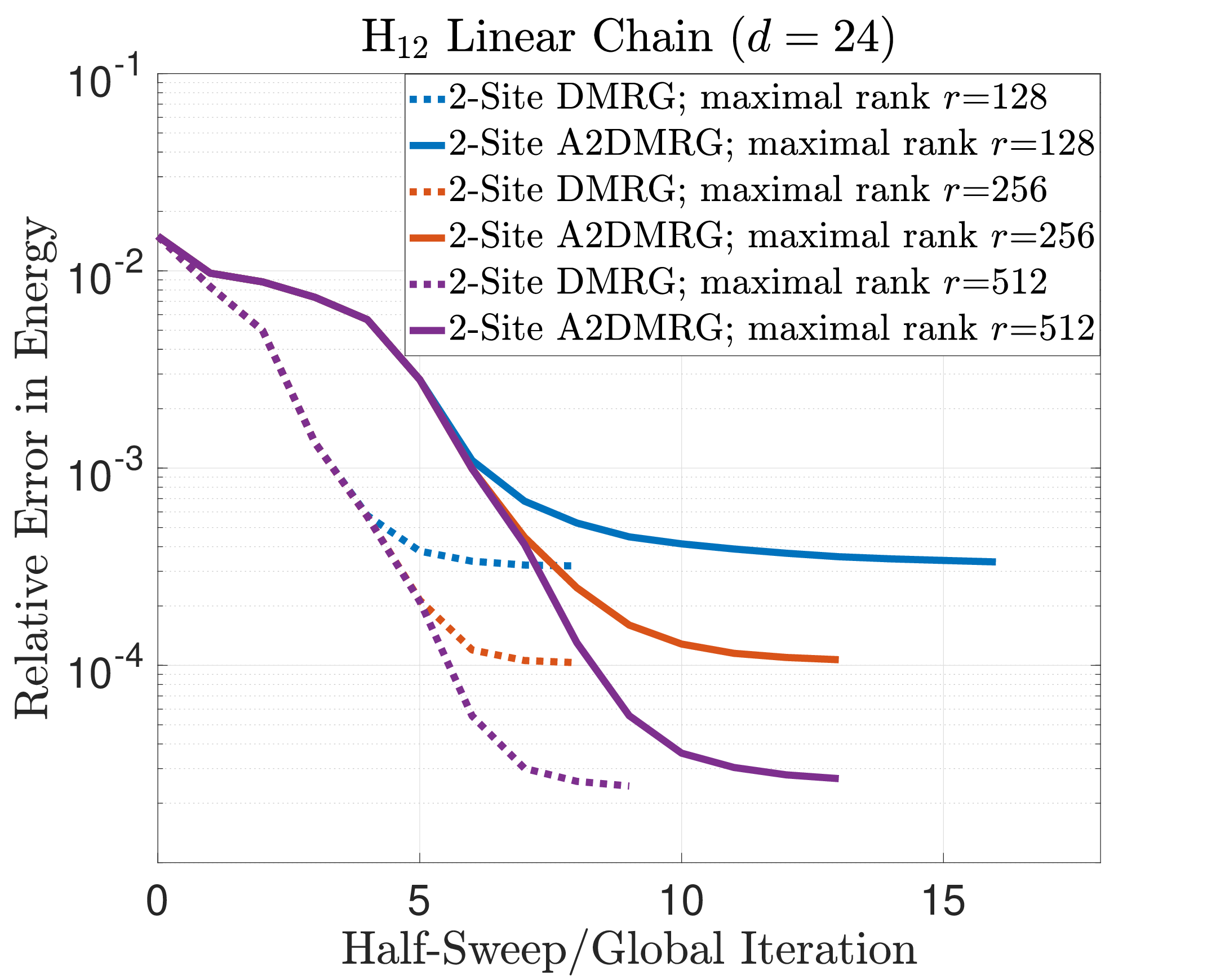} 
	\end{subfigure}\hfill
	\begin{subfigure}[t]{0.49\textwidth}
		\centering
		\includegraphics[width=\textwidth]{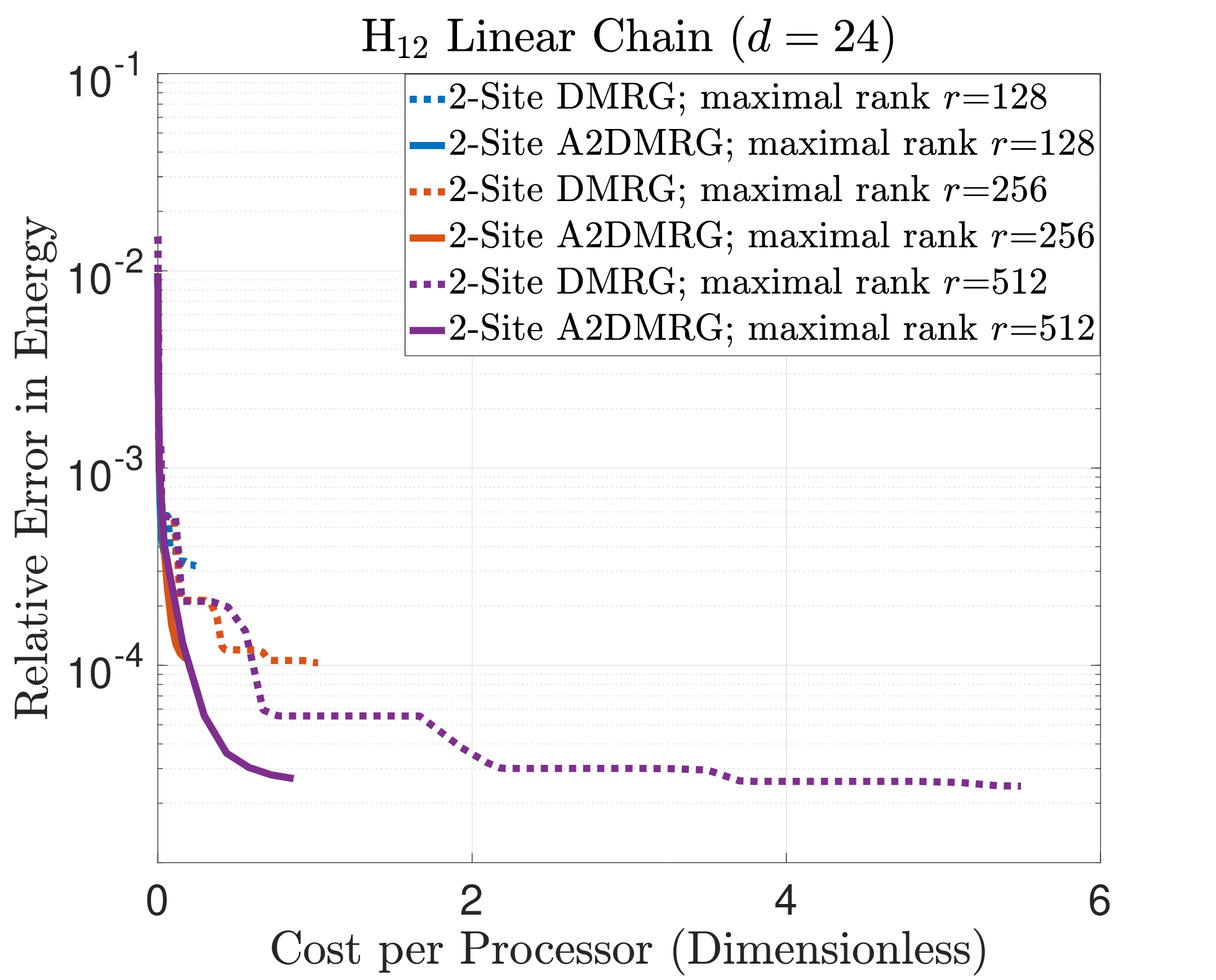} 
	\end{subfigure}
	\caption{Convergence plot of the classical two-site DMRG and two-site A2DMRG algorithms for a linear, stretched H$_{10}$ and H$_{12}$ chain. The tolerances for the eigenvalue solvers and the truncated SVD were set to $10^{-6}$ and the algorithms were run until the relative energy difference between successive half-sweeps/global iterations was smaller than $10^{-6}$.}
	\label{fig:2}
\end{figure}

Figures \ref{fig:1}-\ref{fig:3} display the resulting error plots as a function of either the number of global iterations or the (dimensionless) \emph{cost per processor}. The term global iteration refers to one half-sweep of the classical DMRG algorithm or one iteration of the A2DMRG \Cref{alg:PALS}, both of which consists of solving exactly $d-1$ local minimization problems. The cost per processor is computed by evaluating, through the course of each global iteration, the number and flop-count of tensor contractions, matrix-vector products, and linear algebra operations such as QR and singular value decompositions. In the case of the classical DMRG algorithm, which is sequential in nature, we assume that all operations are performed on a single processor. In the case of the A2DMRG \Cref{alg:PALS}, we assume that Steps 2 and 3, i.e., the DMRG micro-iterations and the solution of the second-level minimization problems are computed as much as possible in parallel. More precisely, we assume that each of the $(d-1)$ DMRG micro-iterations in Step 2 of \Cref{alg:PALS} is performed on an independent processor, and each entry of the $d\times d$ symmetric coarse matrices appearing in the second-level minimization problem (Recall Equation \ref{eq:aux_eig_final_sym}) is computed on an independent processor. The cost per processor is then computed by taking the processor with the highest cost for Steps 2 and 3, and adding to it the cost of Steps 1 and 4 of \Cref{alg:PALS}, the latter two steps being sequential in nature.

\begin{figure}[t]
	\centering
	\begin{subfigure}[t]{0.49\textwidth}
		\centering
		\includegraphics[width=\textwidth]{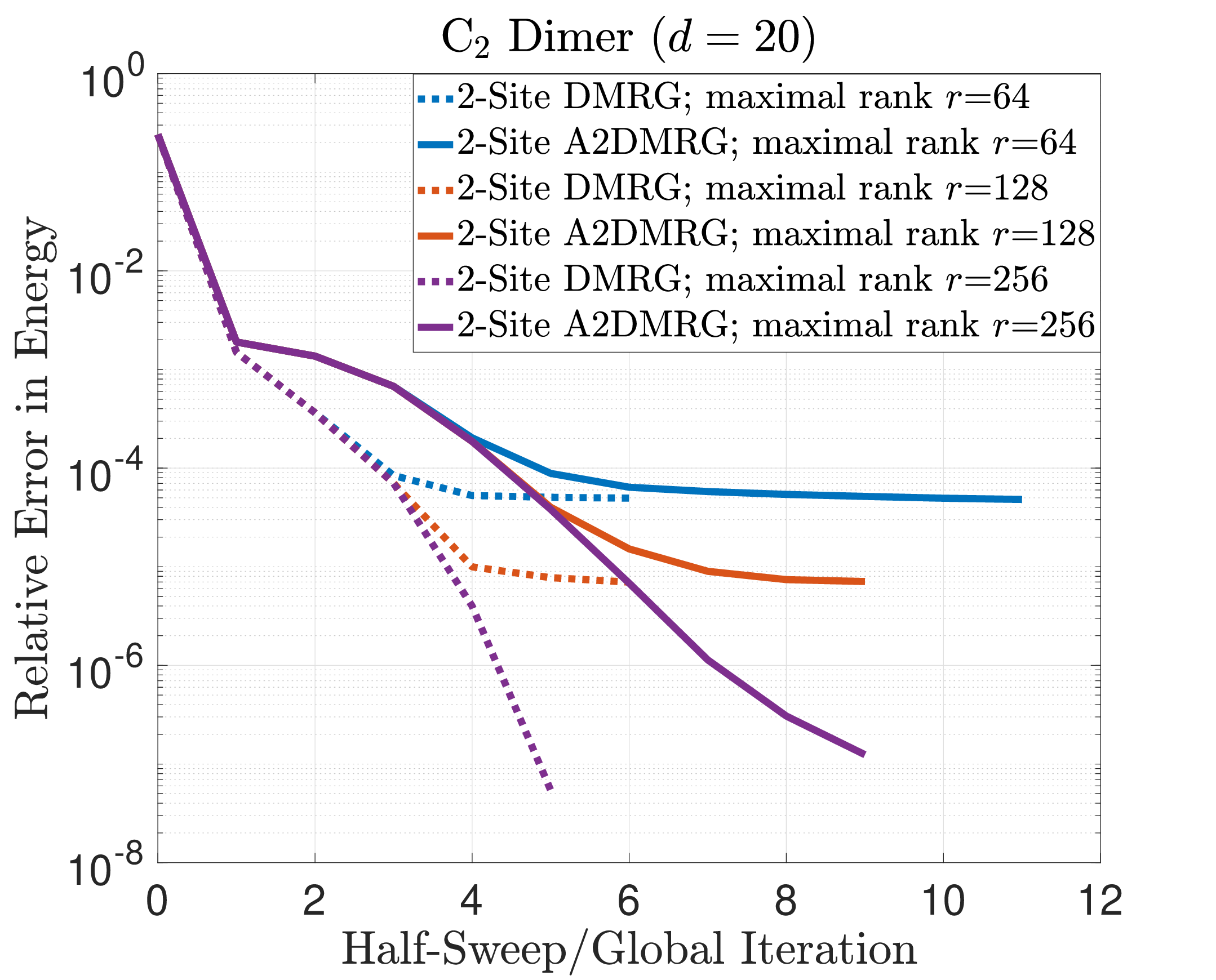} 
	\end{subfigure}\hfill
	\begin{subfigure}[t]{0.49\textwidth}
		\centering
		\includegraphics[width=\textwidth]{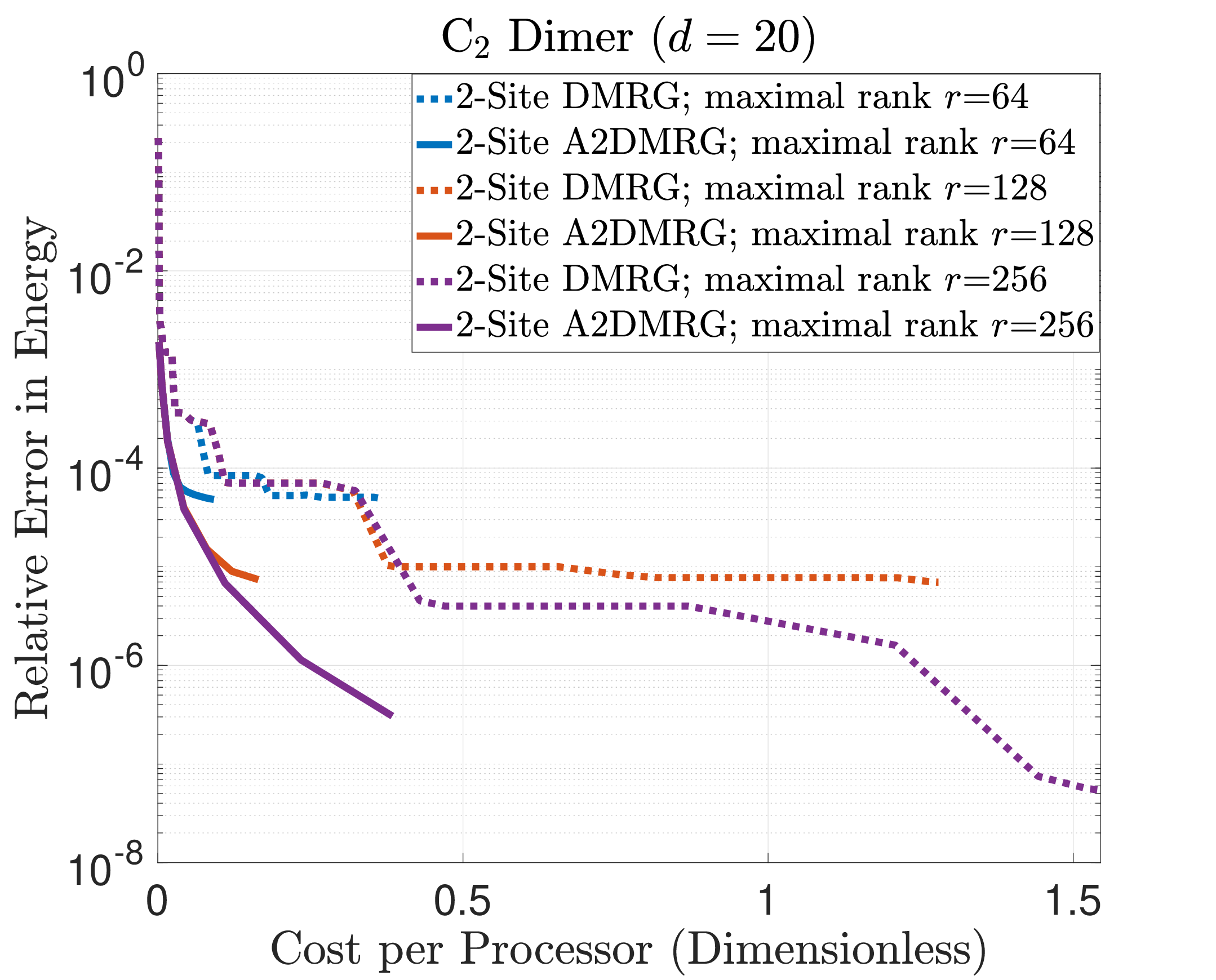} 
	\end{subfigure}\hfill
	\centering
	\begin{subfigure}[t]{0.49\textwidth}
		\centering
		\includegraphics[width=\textwidth]{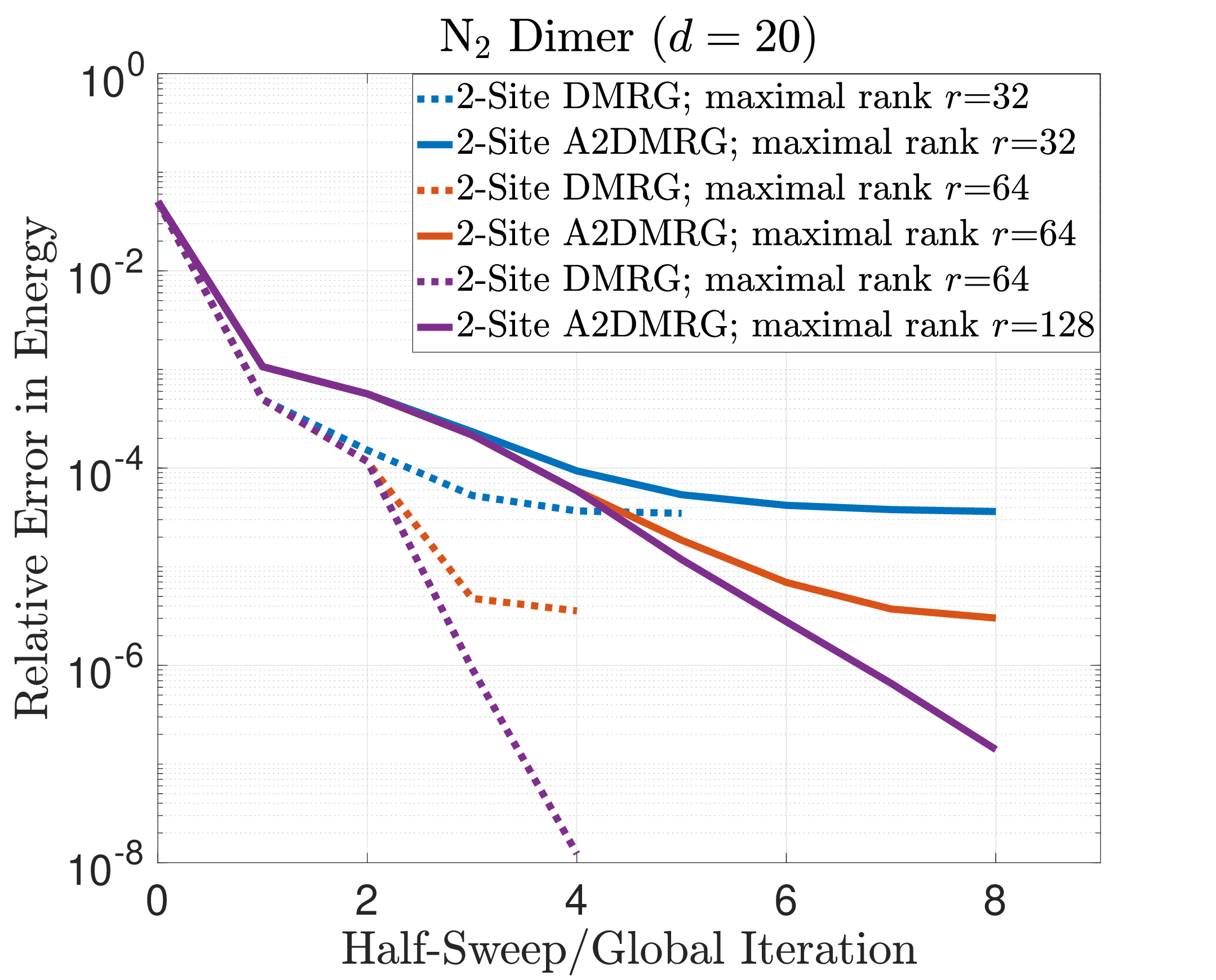} 
	\end{subfigure}\hfill
	\begin{subfigure}[t]{0.49\textwidth}
		\centering
		\includegraphics[width=\textwidth]{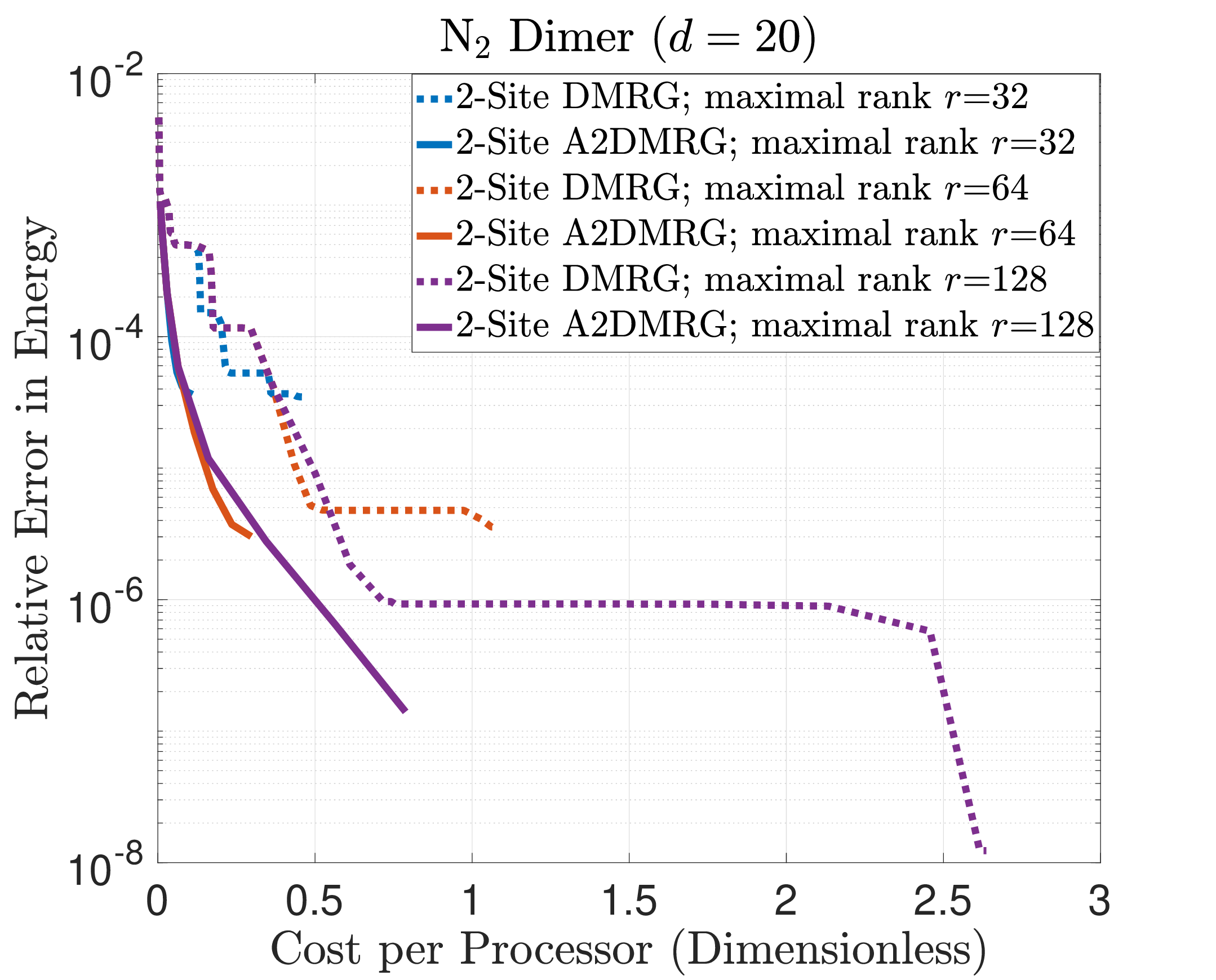} 
	\end{subfigure}
	\caption{Convergence plot of the classical DMRG and A2DMRG algorithms for the C$_2$ and N$_2$ dimers. The tolerances for the eigenvalue solvers and the truncated SVD were set to $10^{-6}$ and the algorithms were run until the relative energy difference between successive half-sweeps/global iterations was smaller than $10^{-6}$.}
	\label{fig:3}
\end{figure}

\begin{figure}[H]
	\centering
	\begin{subfigure}[t]{0.49\textwidth}
		\centering
		\includegraphics[width=\textwidth]{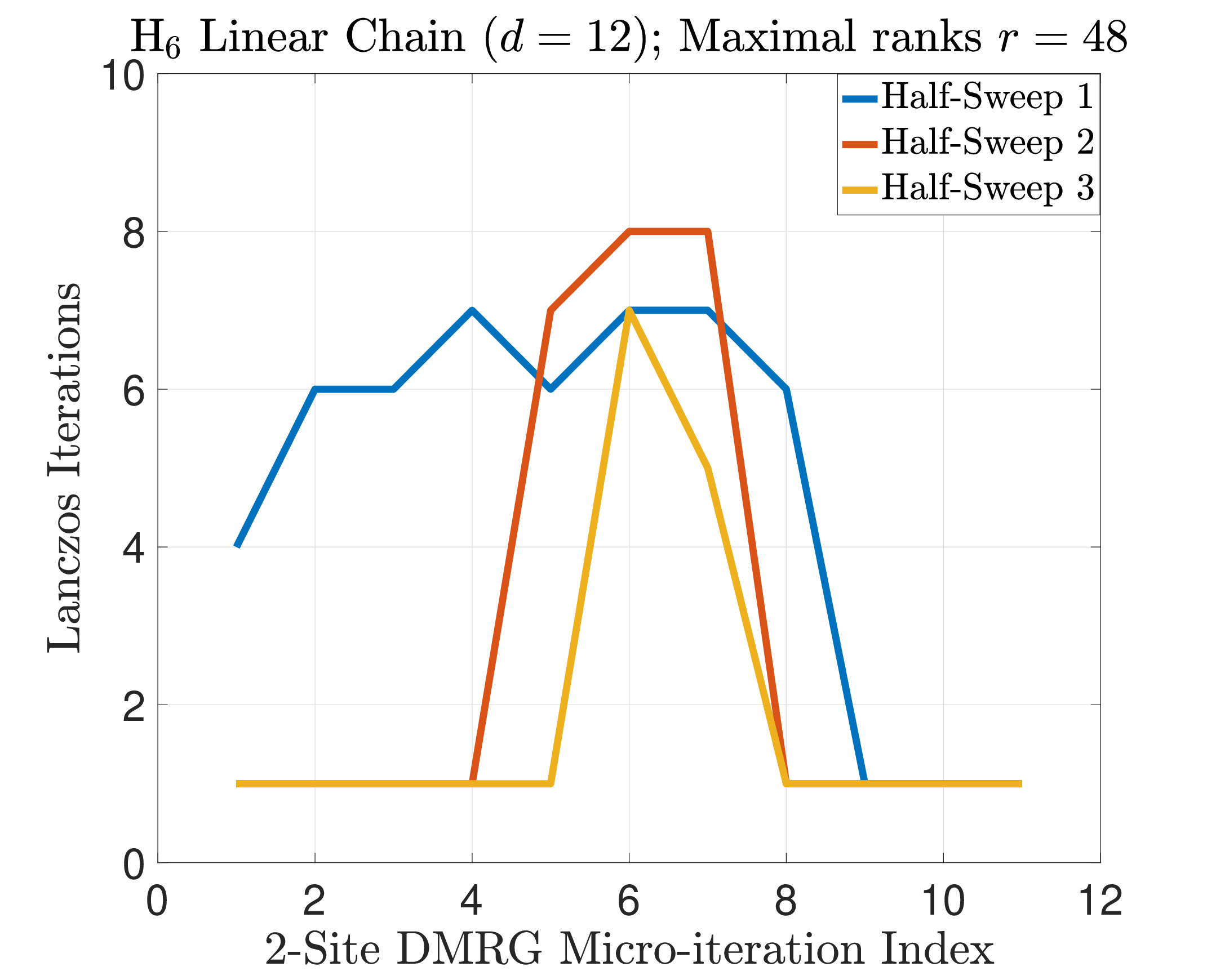} 
		\caption{The classical two-site DMRG algorithm.}
	\end{subfigure}\hfill
	\begin{subfigure}[t]{0.49\textwidth}
		\centering
		\includegraphics[width=\textwidth]{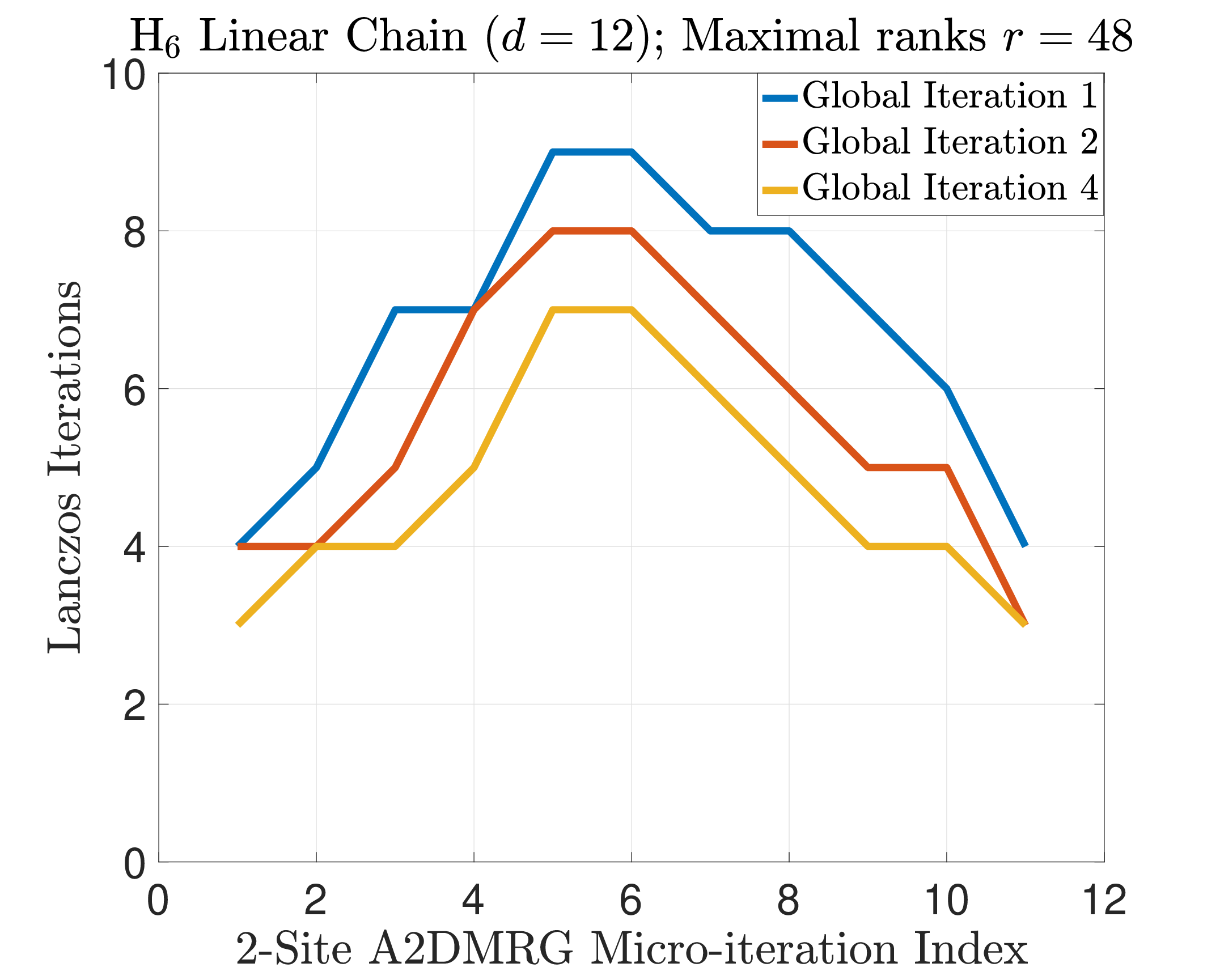} 
		\caption{The two-site A2DMRG algorithm.}
	\end{subfigure}\hfill
	\centering
	\begin{subfigure}[t]{0.49\textwidth}
		\centering
		\includegraphics[width=\textwidth]{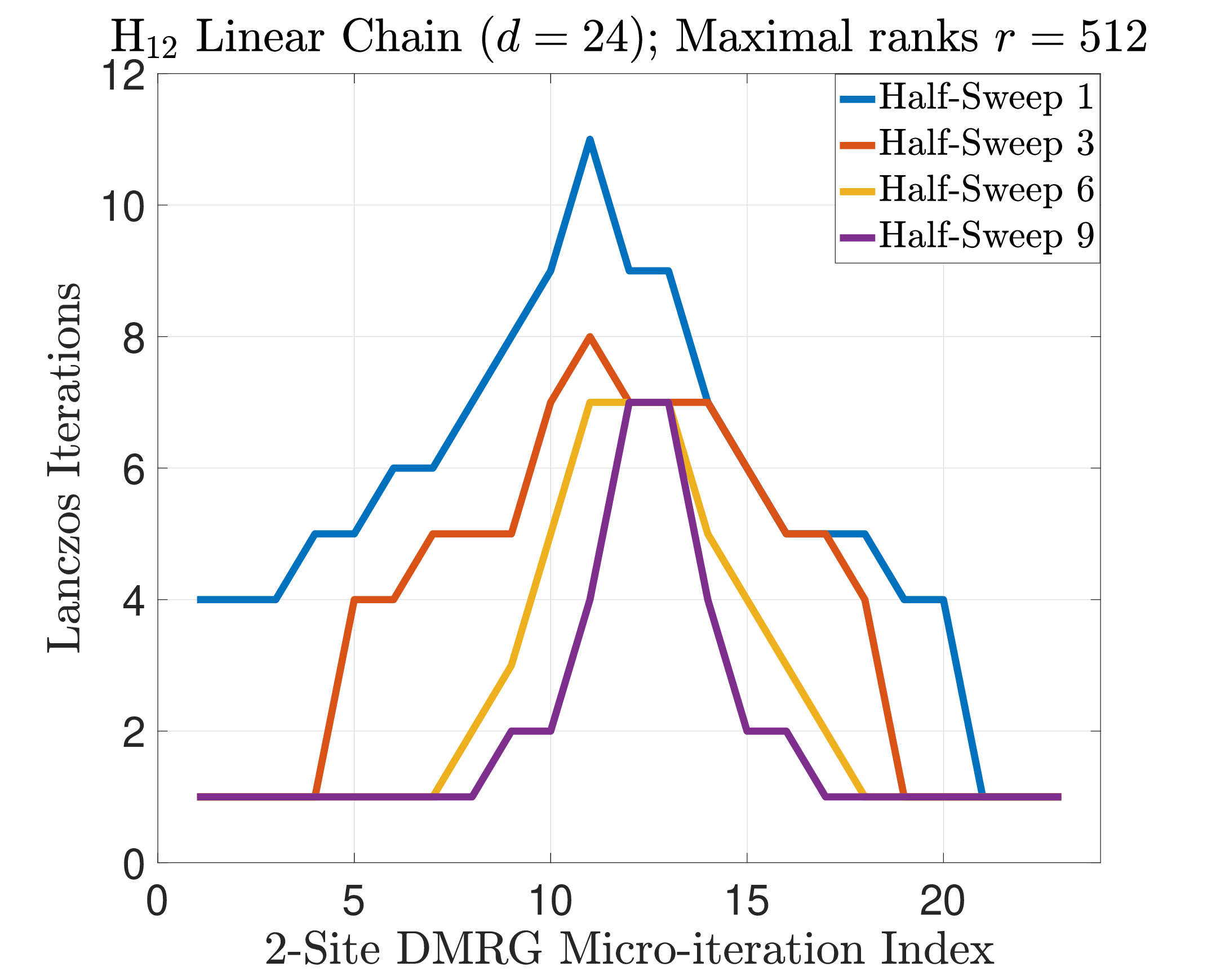} 
		\caption{The classical two-site DMRG algorithm.}
	\end{subfigure}\hfill
	\begin{subfigure}[t]{0.49\textwidth}
		\centering
		\includegraphics[width=\textwidth]{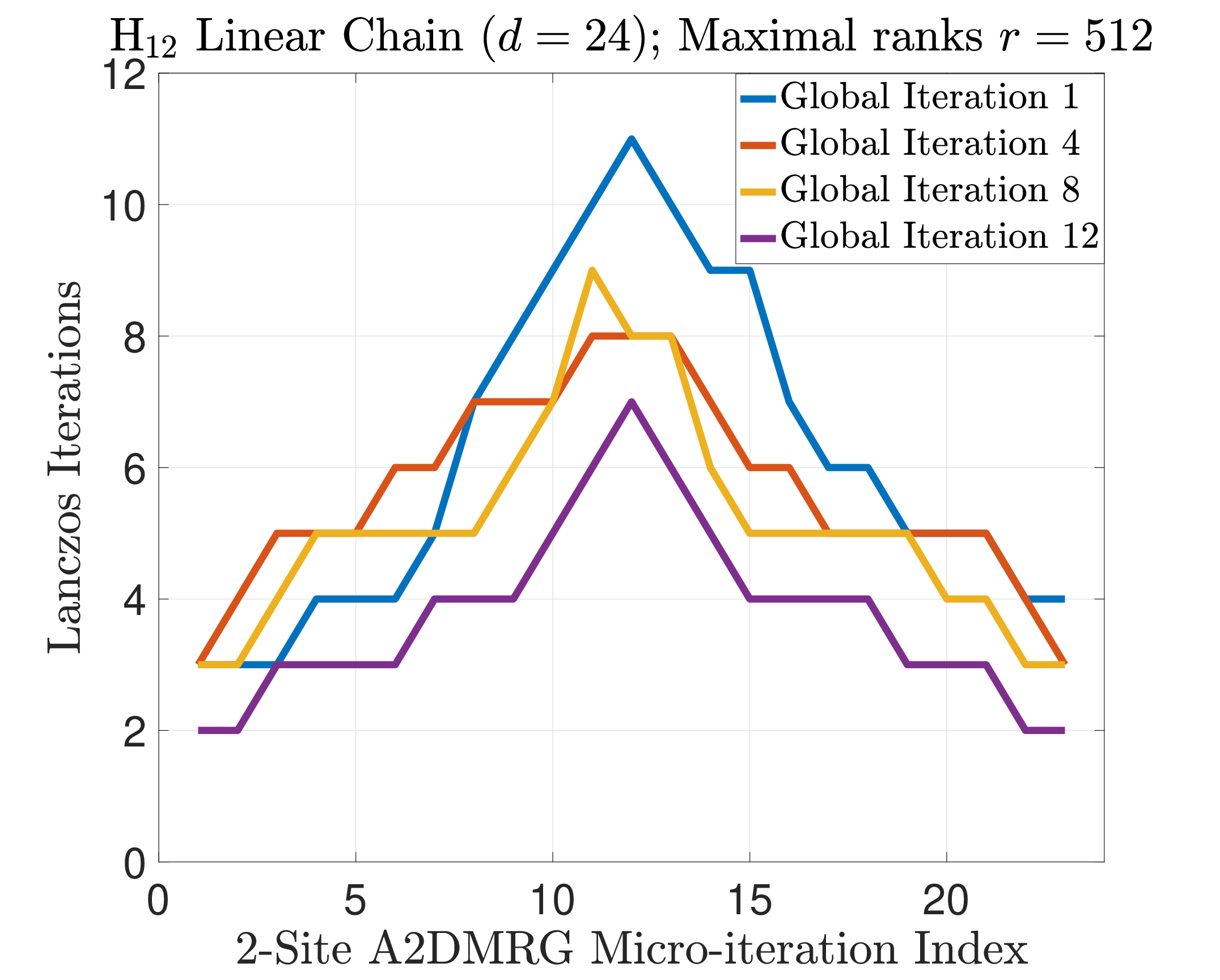} 
		\caption{The two-site A2DMRG algorithm.}
	\end{subfigure}
	\centering
	\begin{subfigure}[t]{0.49\textwidth}
		\centering
		\includegraphics[width=\textwidth, height=4.8cm]{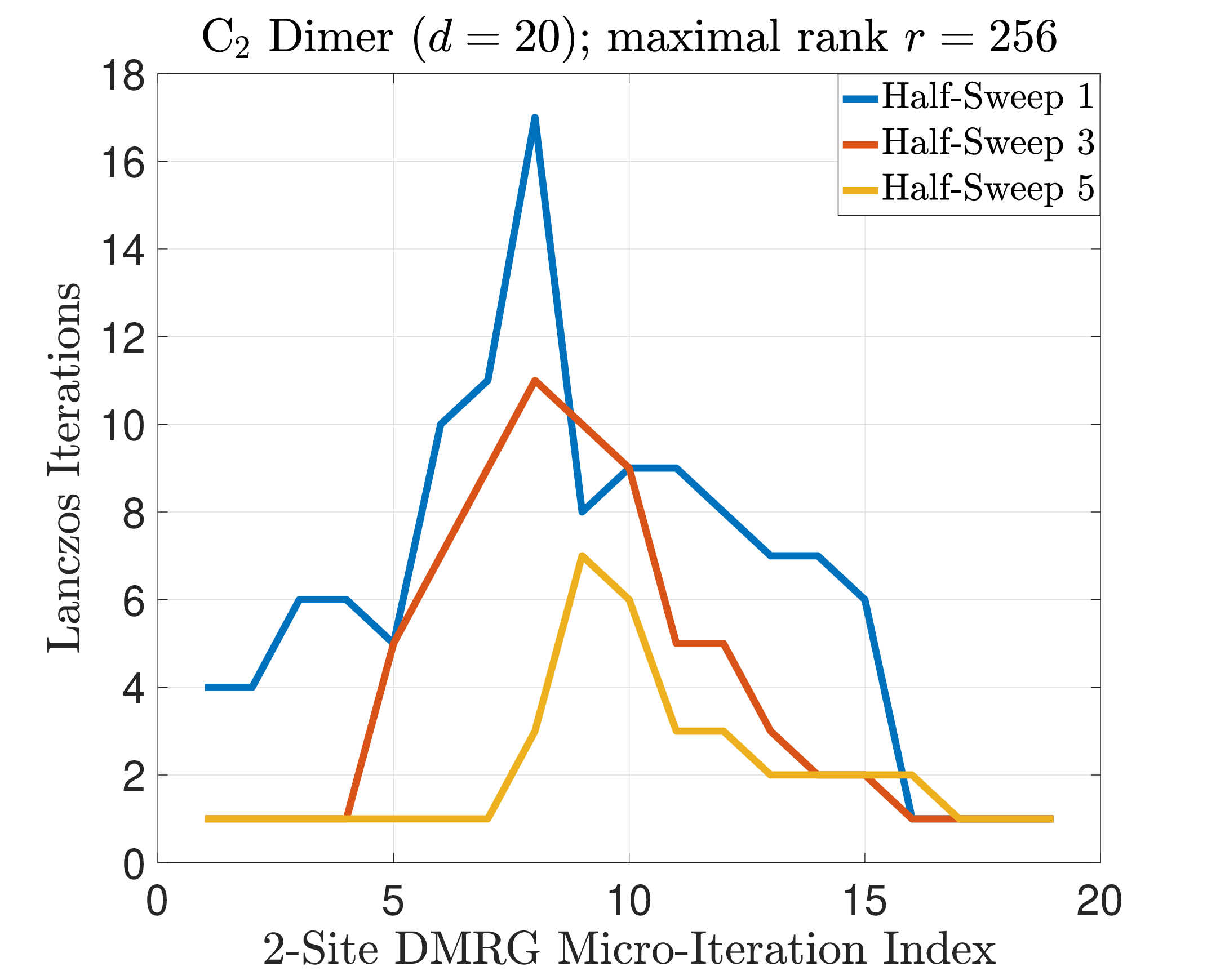} 
		\caption{The classical two-site DMRG algorithm.}
	\end{subfigure}\hfill
	\begin{subfigure}[t]{0.49\textwidth}
		\centering
		\includegraphics[width=\textwidth, height=4.8cm]{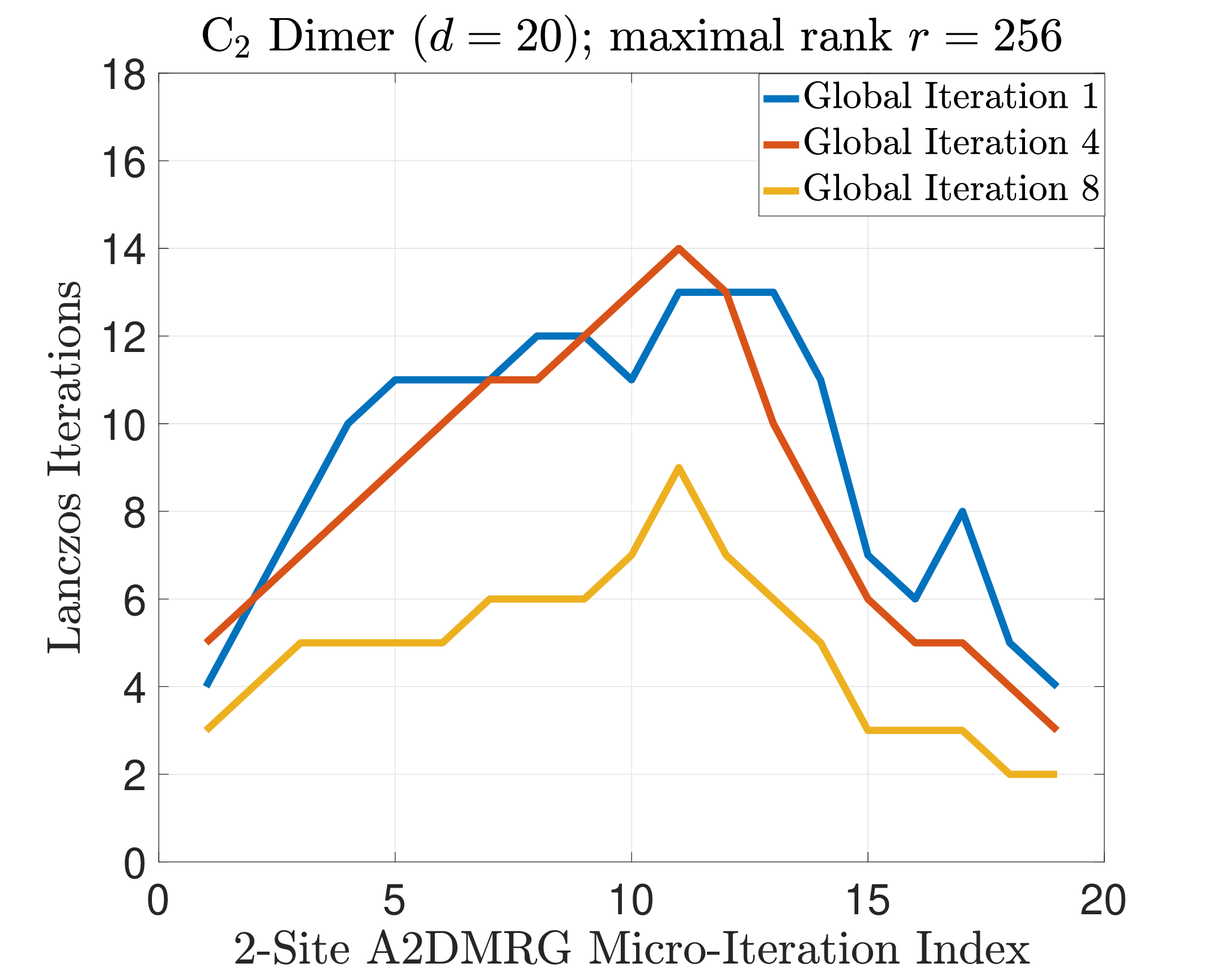} 
		\caption{The two-site A2DMRG algorithm.}
	\end{subfigure}
	\caption{The number of Lanczos iterations required for the micro-iterations in the classical DMRG and A2DMRG algorithms. The tolerances for the eigenvalue solvers and the truncated SVD were set to $10^{-6}$.}
	\label{fig:4}
\end{figure}

A number of observations can now be made. First we note that the A2DMRG \Cref{alg:PALS} demonstrates slower convergence than the classical DMRG algorithm~\ref{alg:ALS} as a function of the global iterations. Although, a rigorous quantification of the relative convergence rates of these two algorithms seems out of reach at the moment (indeed, even for the one-site DMRG algorithm, the local convergence analysis in~\cite{rohwedder2013local} does {not} yield convergence rates), we feel that there is an intuitive explanation for the observed behavior: First, due to the sequential nature of the classical DMRG micro-iterations, the TT ranks of the ansatz grow very quickly and reach the maximal rank parameter $\bold{r}$ within two half-weeps. On the other hand, the A2DMRG algorithm requires several global iterations before a maximal rank parameter is achieved thereby slowing the convergence rate. Second, it is well-known in the domain decomposition literature that multiplicative Schwarz methods (which can be seen as generalizations of the Gauss-Seidel method) converge, in general, faster than additive Schwarz methods (which can be seen as generalizations of the Jacobi method). Intuitively, therefore, the classical DMRG algorithm which can be viewed as a non-linear, overlapping multiplicative Schwarz method should also converge faster than the A2DMRG \Cref{alg:PALS} which can be interpreted as a non-linear, overlapping \emph{additive} Schwarz method. Interestingly, such a domain-decomposition-based interpretation of DMRG has-- to our knowledge-- not been studied in the literature.

Second, in spite of the previous observation on its relative convergence rate, we nevertheless observe that the A2DMRG \Cref{alg:PALS} exhibits a significant parallel speedup in terms of the cost per processor. The exact speed-up varies depending on the choice of ranks and molecule (the latter determines the order $d$ of the tensor), but ranges from a factor two to a factor six. Part of the reason for this speed-up (which is lower than the theoretical optimal speed-up of factor $d/4$) is that the classical and additive DMRG algorithms seem to require a different number of Lanczos iterations for the solution of the micro-iterations. To explore this, we plot in Figure \ref{fig:4}, the number of Lanczos iterations required for the DMRG micro-iterations over the course of the DMRG half-sweeps/global iterations for several of these molecular systems. These results indicate that, in general, only a minority of local problems in the classical DMRG algorithm require more than a single Lanczos iteration. This is not the case for the A2DMRG algorithm, which typically requires several Lanczos iterations for each micro-iteration, thereby producing a non-optimal speed-up. However, we emphasize that the number of Lanczos iterations for the A2DMRG algorithm displays minimal growth as a function of the maximal TT ranks $\bold{r}$ and the order $d$ of the tensor. The corresponding results for H$_8$, H$_{10}$ and the N$_{2}$ dimer are similar and are therefore not displayed for brevity.

\begin{figure}[H]
	\centering
	\begin{subfigure}[t]{0.49\textwidth}
		\centering
		\includegraphics[width=\textwidth, height=4.8cm]{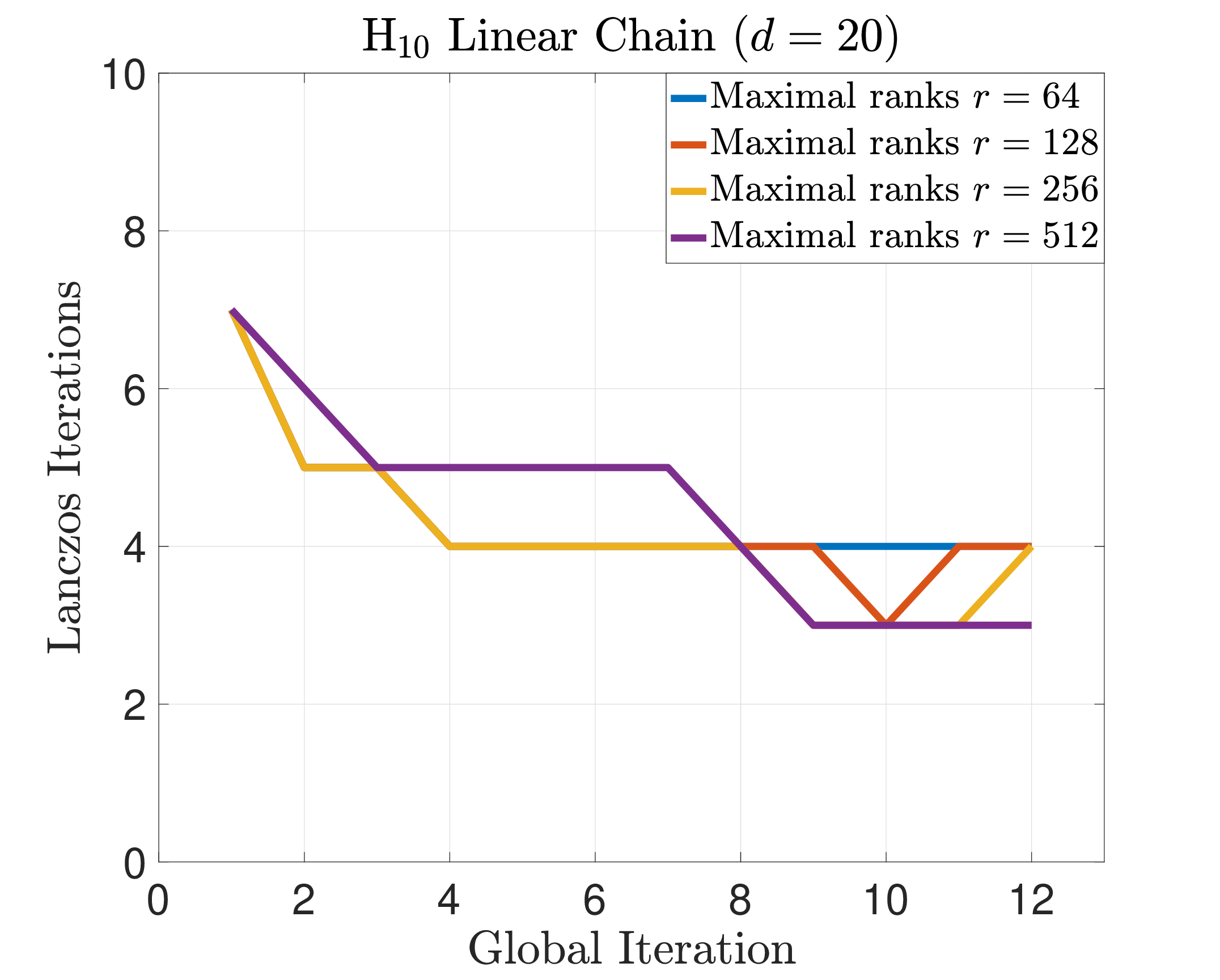} 
	\end{subfigure}\hfill
	\begin{subfigure}[t]{0.49\textwidth}
		\centering
		\includegraphics[width=\textwidth, height=4.8cm]{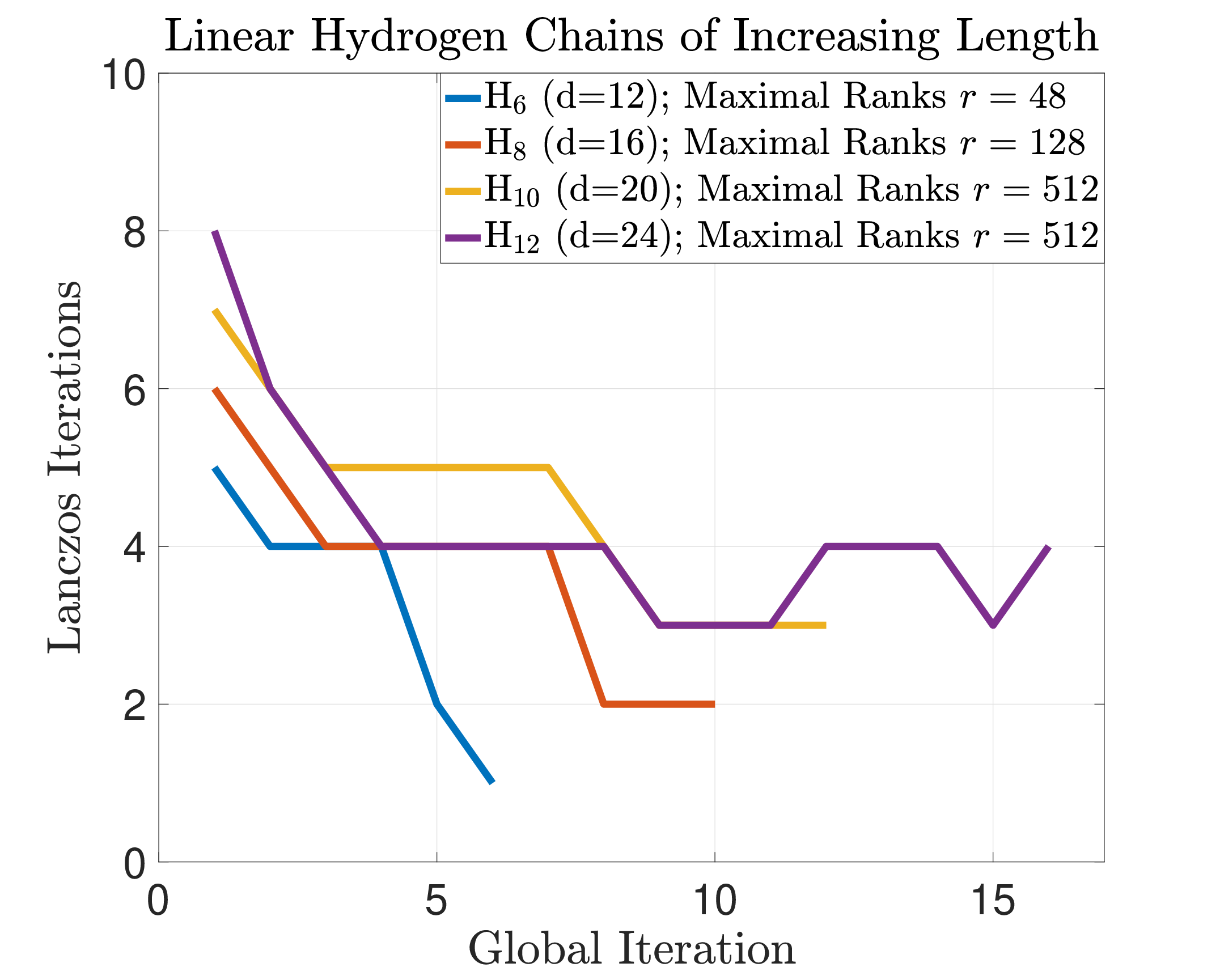} 
	\end{subfigure}\hfill
	\caption{The number of Lanczos iterations required for the second-level minimization problem in the A2DMRG algorithms. All tolerances were set to $10^{-6}$.}
	\label{fig:new_1}
\end{figure}

For completeness, we also plot in Figure \ref{fig:new_1}, the number of Lanczos iterations required to solve the second-level minimization problem in several of these test cases. As before, we observe there that the number of Lanczos iterations is stable as a function of both the maximal rank parameters and the order $d$ of the tensor.

\begin{figure}[h!]
	\centering
	\begin{subfigure}[t]{0.49\textwidth}
		\centering
		\includegraphics[width=\textwidth]{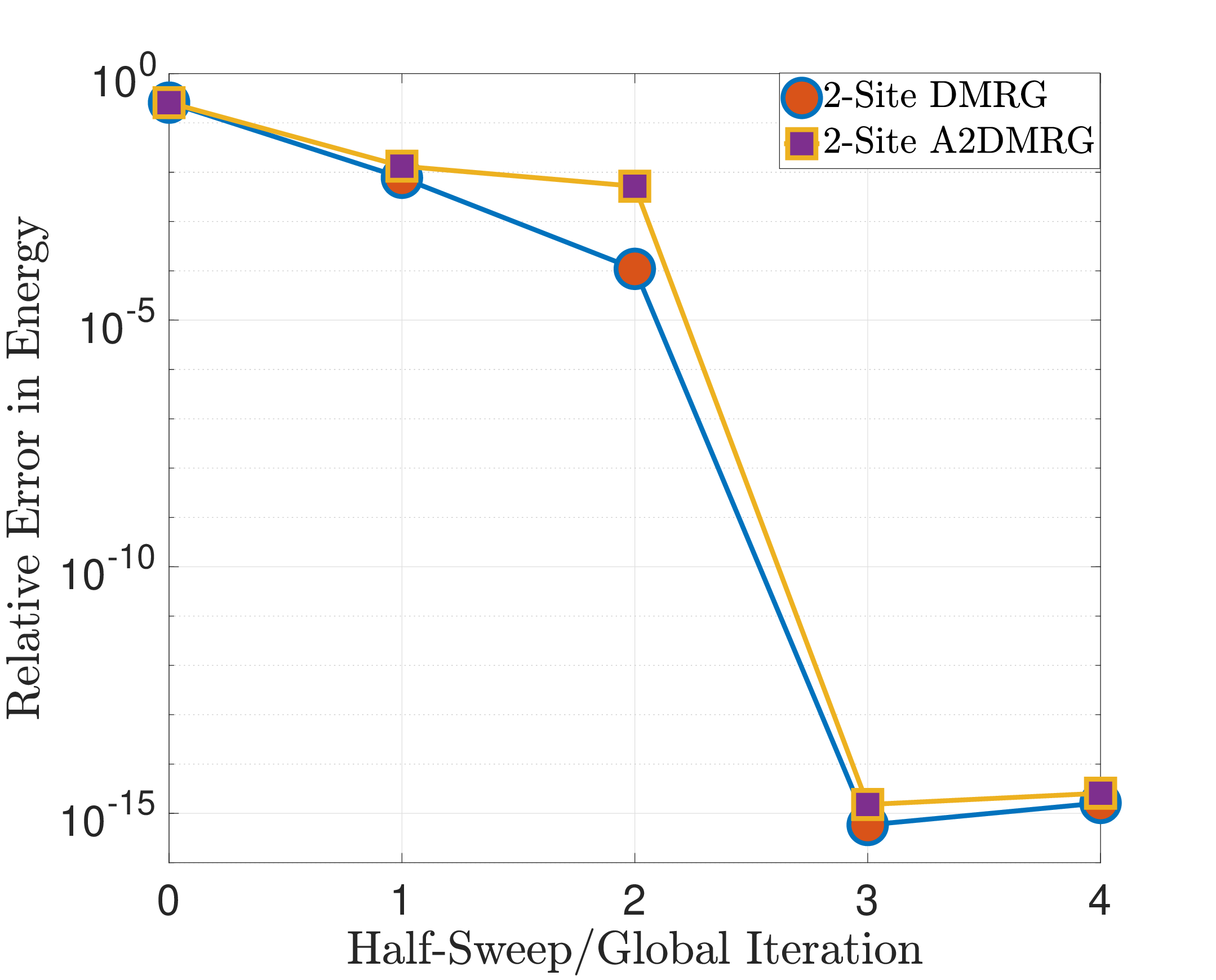} 
	\end{subfigure}\hfill
	\begin{subfigure}[t]{0.49\textwidth}
		\centering
		\includegraphics[width=\textwidth]{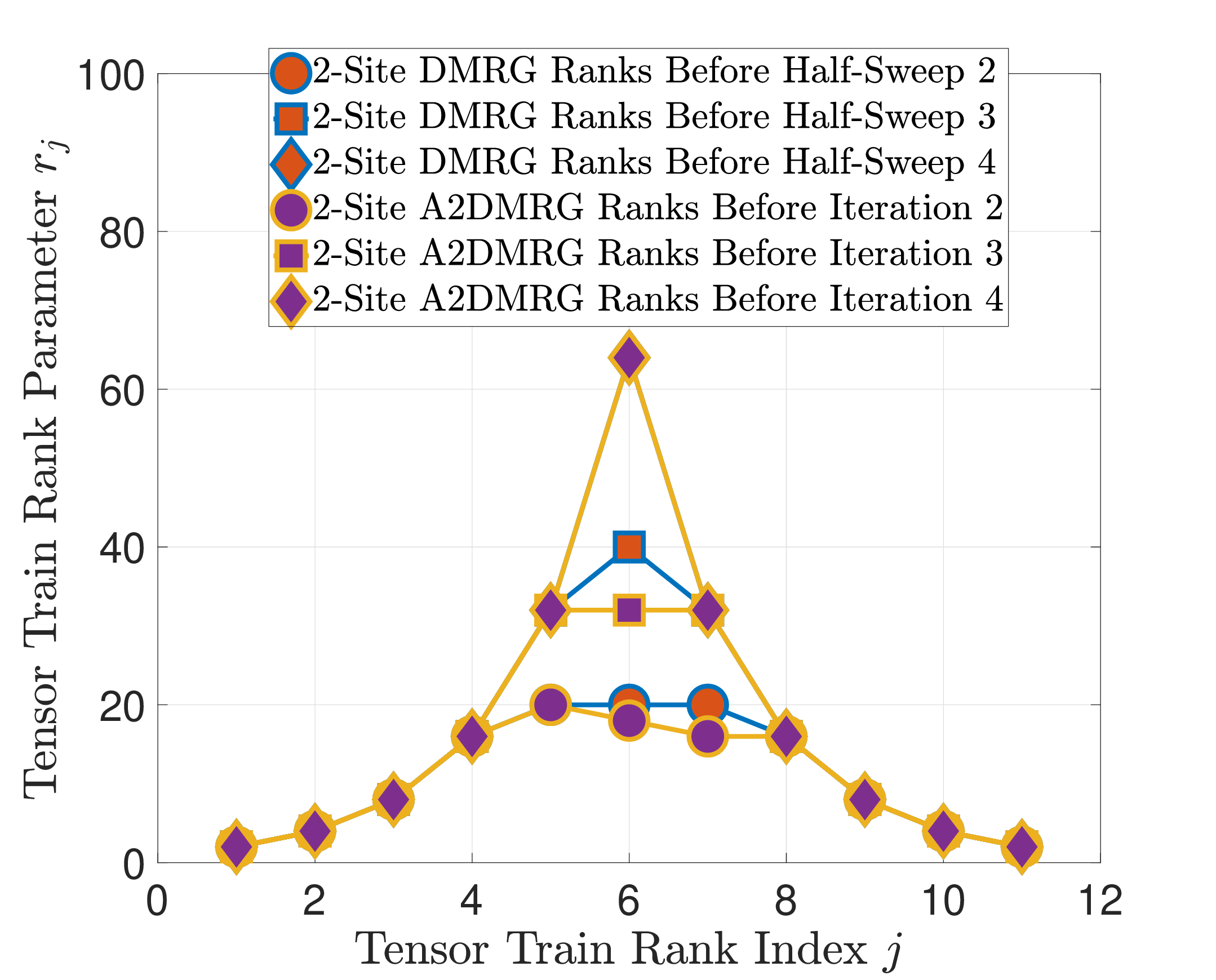} 
	\end{subfigure}\hfill
	\centering
	\begin{subfigure}[t]{0.49\textwidth}
		\centering
		\includegraphics[width=\textwidth]{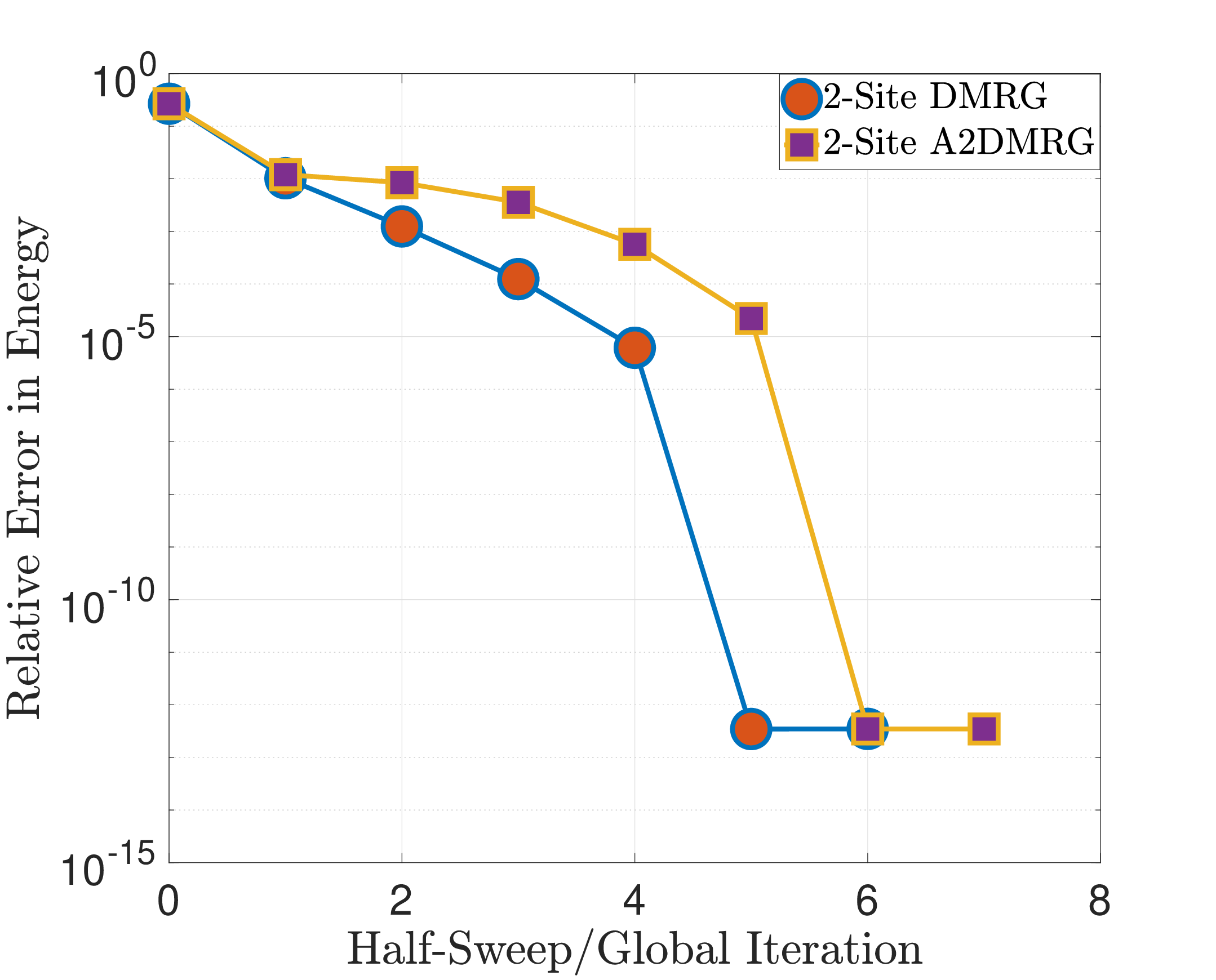} 
	\end{subfigure}\hfill
	\begin{subfigure}[t]{0.49\textwidth}
		\centering
		\includegraphics[width=\textwidth]{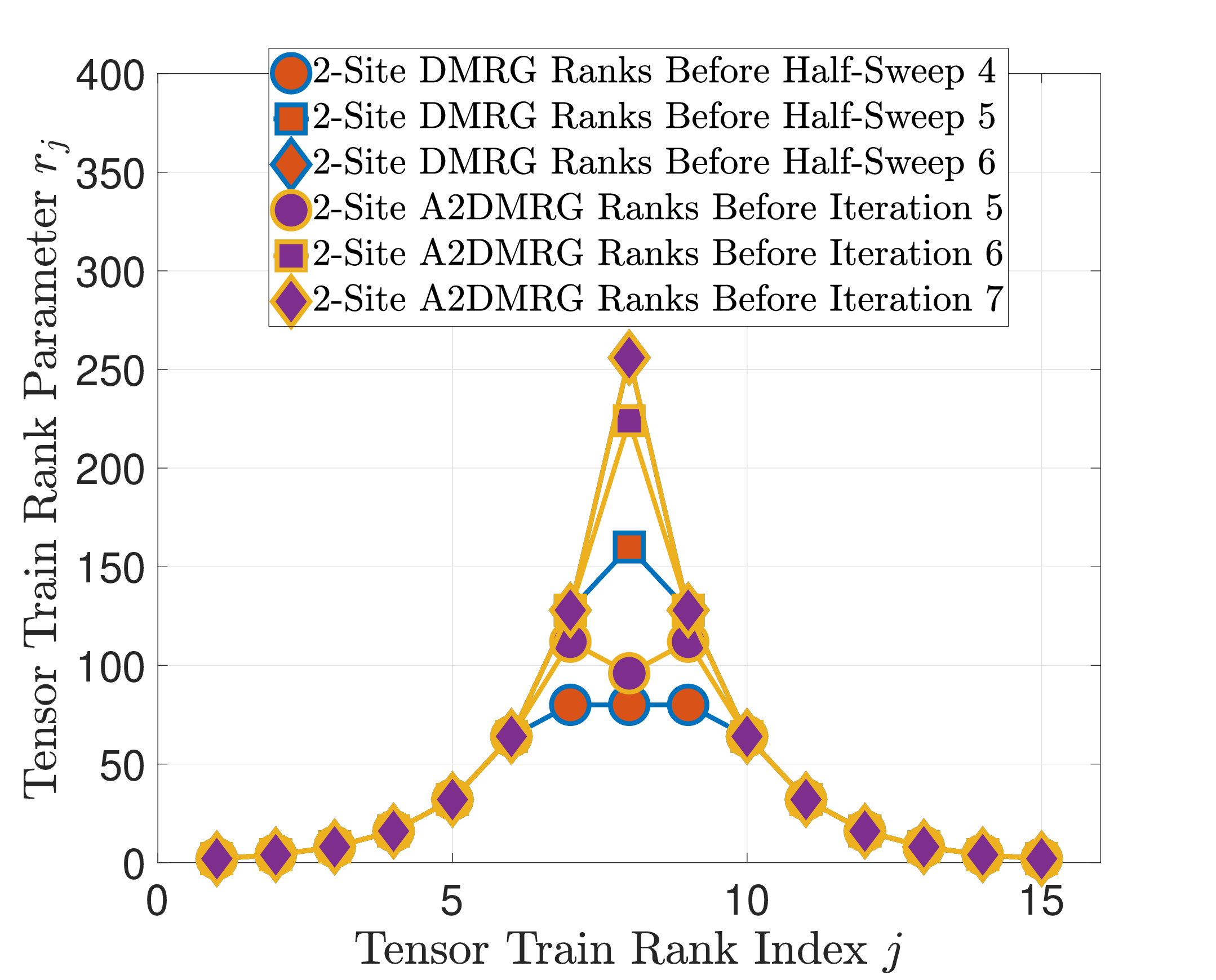} 			
	\end{subfigure}
	\caption{Numerical evidence for the convergence of the classical DMRG and A2DMRG algorithms in a single half-sweep when the tensor train ranks of the ansatz equal the separation ranks of the sought minimizer. The tolerances for the eigenvalue solvers were set to $10^{-10}$, and the algorithms were run until the relative energy difference between successive half-sweeps/global iterations was smaller than $10^{-10}$. The molecular systems here are H$_6$ (above) for which $d=12$ and H$_8$ (below) for which $d=16$.}
	\label{fig:7}
\end{figure}

A well-known property of the classical one-site DMRG algorithm is that if the ranks of the ansatz tensor-train decomposition are exactly the separation ranks of the minimizing tensor, then the DMRG algorithm converges in a single half-sweep \cite{rohwedder2013local}. A similar property also seems to holds for the two-site DMRG algorithm. An interesting question to ask is if the A2DMRG \Cref{alg:PALS} satisfies a similar property. In order to answer this question, we show in Figure \ref{fig:7} the error plots for the classical DMRG and A2DMRG algorithms applied to the H$_6$ and H$_8$ molecule with \emph{no rank truncation}. These plots indicate that as soon as the TT ranks of the ansatz are sufficiently close to the maximal rank $\bold{r}=64$ for H$_6$ and $\bold{r}=256$ for H$_8$, the A2DMRG algorithm converges to within machine precision in a single global iteration. We remark that this property does not seem to be satisfied by tensor minimization algorithms based on Riemannian optimization.




\section{Discussion and Perspective} 

The additive two-level DMRG algorithm raises interesting questions from the perspective of both analysis and applications. On the one hand, a rigorous convergence analysis of the algorithm, including a study of how the choice of coarse space impacts convergence rates, remains to be undertaken. Moreover, the parallelism of local solves suggests that adaptive Schwarz strategies wherein the coarse space is dynamically enriched (possibly based on error indicators) may yield further speedups and accuracy. Addressing these points can improve our understanding of Schwarz methods in the context of tensor networks and will be the subject of future work. 

\section*{Acknowledgments}
The first author (L. G.) acknowledges support from the European Research Council (ERC) under the European Union’s Horizon 2020 research and innovation program (Grant No. 810367), project EMC2. The second author (M. H.) warmly thanks Mi-Song Dupuy for discussions that initiated this project, Siwar Badreddine for explaining in painstaking detail the implementation of the DMRG algorithm, and Fabrice Serret for very helpful comments on the manuscript.

\bibliographystyle{plain}

\bibliography{refs.bib}

\appendix 

\section{Computational Scaling of the A2DMRG Algorithm}\label{sec:3.3}~

The aim of this section is to discuss in more detail the implementation and computational scaling of various steps in the two-level A2DMRG \Cref{alg:PALS} when the underlying energy functional $\mathcal{J}\colon U\subset \mathbb{R}^{n_1\times \ldots \times n_d} \rightarrow \mathbb{R}$ takes the form of a Rayleigh quotient as given by Equation \eqref{eq:energy_eig}, i.e.,
\begin{align*}
	\forall \bold{X}\in  U=\mathbb{R}^{n_1 \times n_2 \times \ldots n_d}\setminus \{0\}\colon \qquad        \mathcal{J}(\bold{X})= \frac{\langle \bold{X}, \bold{A}\bold{X}\rangle}{\Vert \bold{X}\Vert^2},
\end{align*}
where $\bold{A}\colon \mathbb{R}^{n_1 \times n_2 \times \ldots n_d} \rightarrow \mathbb{R}^{n_1 \times n_2 \times \ldots n_d}$ is a symmetric operator with a simple lowest eigenvalue. The case when $\mathcal{J}$ is a quadratic form as given by Equation \eqref{eq:energy_lin}, generated by a symmetric, positive definite tensor operator  $\bold{A}\colon \mathbb{R}^{n_1 \times n_2 \times \ldots n_d} \rightarrow \mathbb{R}^{n_1 \times n_2 \times \ldots n_d}$ is very similar and therefore omitted for brevity.

As is standard in the literature, we assume that the tensor operator $\bold{A}\colon \mathbb{R}^{n_1 \times \ldots \times n_d}$ $ \rightarrow \mathbb{R}^{n_1 \times \ldots \times n_d}$ possesses a tensor train decomposition. In other words, viewing $\bold{A}$ as an order $d$-tensor in the space $\mathbb{R}^{(n_1 \times n_1) \times \ldots \times (n_d \times n_d)}$, we assume that there exists some vector of ranks $\bold{R}= (R_0, \ldots R_{d})\in \mathbb{N}^{d+1}$ with ${R}_0={R}_d=1$ and $\{\bold{A}_j\}_{j=1}^d, ~ \bold{A}_j \in \mathbb{R}^{R_{j-1}\times n_j \times n_j \times R_j}$ such that
\begin{align}\label{eq:MPO}
	&\bold{A}\big((x_1,x_1'), \ldots, (x_d, x_d')\big)\\[0.25em] \nonumber
	:= &\tau(\bold{A}_1, \ldots, \bold{A}_d)\big((x_1,x_1'), \ldots, (x_d, x_d')\big)\\[0.25em] \nonumber
	:=&  \bold{A}_1(:, x_1,x_1', :)\bold{A}_2(:, x_2,x_2', :)\ldots \bold{A}_d(:, x_d,x_d', :)\\[0.25em]
	=&\sum_{k_1=1}^{R_1} \sum_{k_2=1}^{R_2} \ldots \sum_{k_{d-1}=1}^{R_{d-1}} \bold{A}_1(1, x_1, x_1', k_1)\bold{A}_2(k_1, x_2, x_2', k_2)\ldots \bold{A}_d(k_{d-1}, x_d, x_d', 1). \nonumber
\end{align}
Such a decomposition can, for instance, be obtained by reordering the indices in the matrix representation of $\bold{A}$ and applying the TT-SVD algorithm \cite{oseledets2011tensor}. We refer to \cite[Appendix]{badreddine2024leveraging} for such an approach tailored to the electronic Hamiltonian operator arising from quantum chemistry applications as described in Section \ref{sec:3.2}.

\subsection{Computational Scaling of Classical DMRG Algorithms}\label{sec:3.3.1}~

Let us first recall the implementation of the classical one-site DMRG ~\Cref{alg:ALS}. We assume that we have at hand a $j$-orthogonal tensor train decomposition $\bold{U}= (\bold{U}_1, \ldots, \bold{U}_d)$ for $j \in \{2, \ldots, d-1\}$ (the cases $j=1, d$ being simpler), and we are interested in performing the $j^{\rm th}$ one-site micro-iteration in the course of a left-to-right half-sweep. To do so we are required to perform matrix-vector products of the form
\begin{align}\label{eq:mat_vec}
	\bold{A}_j^{\rm 1site, mat} \bold{v}_j, \qquad j \in  \{1, \ldots, d\}, ~\bold{v}_j \in \mathbb{R}^{r_{j-1}n_jr_j}.
\end{align}
Here, $\bold{A}_j^{\rm 1site, mat} \in \mathbb{R}^{r_{j-1}n_jr_j \times r_{j-1}n_jr_j}$ is the matricization of the operator $\mathbb{P}_{\bold{U}, j,1}^* \bold{A}\mathbb{P}_{\bold{U}, j, 1}$ introduced in Lemma \ref{lem:ALS_micro}, and we remind the reader that $\mathbb{P}_{\bold{U}, j,1}, ~\bold{U}\in \mathcal{U}_{\bold{r}}$ is the retraction operator defined through Definition \ref{def:retraction}.

An efficient method to compute the matrix-vector product \eqref{eq:mat_vec} is to split the construction of the matrix~$\bold{A}_j^{\rm 1site}$ into a left and a right component, frequently referred to as the so-called left and right environments. Indeed, let us first introduce for each $j \in \{1, \ldots, d\}$, the tensor $\bold{G}_{j} \in \mathbb{R}^{r_{j-1} \times r_j \times R_{j-1} \times R_{j} \times r_{j-1}\times r_{j}}$ given by
\begin{align*}
	\bold{G}_j(k_{j-1}, k_{j}, K_{j-1}, K_{j}, &k_{j-1}', k_{j}')=\\
	&\sum_{\ell_j=1}^{n_j} \sum_{\ell_j'=1}^{n_j} \bold{U}_j(k_{j-1}, \ell_j, k_{j})\bold{A}_j(K_{j-1}, \ell_j, \ell_j', K_{j}) \bold{U}_j(k_{j-1}', \ell_j', k_{j}'),
\end{align*}
where the indices $k_{j-1}, k_{j-1}' \in \{1, \ldots, r_{j_1}\}$, $k_{j}, k_{k}' \in \{1, \ldots, r_{j}\}$, $K_{j-1} \in \{1, \ldots, R_{j_1}\}$, and $K_{j} \in \{1, \ldots, R_{j}\}$.

Since $r_0=R_0=r_d=R_d=1$, we can identify $\bold{G}_1$ and $\bold{G}_d$ as elements of $\mathbb{R}^{r_1 \times R_{1}\times r_{1}}$ and $\mathbb{R}^{r_{d-1} \times R_{d-1}\times r_{d-1}}$ respectively. With this construction in mind, we now define for each $j \in \{1, \ldots, d-1\}$, the $j^{\rm th}$ left environment $\bold{H}^{\rm L}_j \in \mathbb{R}^{r_j \times R_j \times r_j}$ as
\begin{align}\label{eq:env_L} 
	\bold{H}^{\rm L}_j(k_j, K_j, k_j')=\sum_{k_1=1}^{r_1}\ldots \sum_{k_{j-1}=1}^{r_{j-1}} \sum_{K_1=1}^{R_1}\ldots \sum_{K_{j-1}=1}^{R_{j-1}} & \sum_{k_1'=1}^{r_1}\ldots \sum_{k_{j-1}'=1}^{r_{j-1}}\bold{G}_1(k_1, K_1, k_1') \ldots \\ \nonumber
	& \ldots \bold{G}_{j}(k_{j-1},k_j, K_{j-1},K_j, k_{j-1}', k_j').
\end{align}
Here, the indices $k_{j}, k_{j}' \in \{1, \ldots, r_{j}\}$ and $K_{j} \in \{1, \ldots, R_{j}\}$. Similarly, we define for each $j \in \{2, \ldots, d\}$, the $j^{\rm th}$ right environment $\bold{H}^{\rm R}_j \in \mathbb{R}^{r_{j-1} \times R_{j-1} \times r_{j-1}}$ as
\begin{align}\label{eq:env_R}
	\bold{H}^{\rm R}_j(k_{j-1}, K_{j-1}, &k_{j-1}')= \sum_{k_{d-1}=1}^{r_{d-1}}\ldots \sum_{k_{j}=1}^{r_{j}} \sum_{K_{d-1}=1}^{R_{d-1}}\ldots \sum_{K_j=1}^{R_{j}}  \sum_{k_{d-1}'=1}^{r_{d-1}}\ldots \\ \noindent
	&\ldots \sum_{k_{j}'=1}^{r_{j}}\bold{G}_j(k_{j-1}, k_j, K_{j-1}, K_j, k_{j-1}', k_j') \ldots \bold{G}_{d}(k_{d-1}, K_{d-1}, k_{d-1}'),
\end{align}
where the indices $k_{j-1}, k_{j-1}' \in \{1, \ldots, r_{j-1}\}$ and $K_{j-1} \in \{1, \ldots, R_{j-1}\}$.

It can now readily be seen that for micro-iteration $j \in \{2, \ldots d-1\}$, the matrix-vector product \eqref{eq:mat_vec} can be computed by contracting the left environment $\bold{H}^{\rm L}_{j-1}$ with $\bold{A}_j, \bold{v}_j$ and the right environment $\bold{H}^{\rm R}_{j+1}$. Moreover, for micro-iteration $j=1$ (respectively, $j=d$) we simply need to contract $\bold{A}_j$ and $\bold{v}_j$ with the right environment $\bold{H}^{\rm R}_{2}$ (respectively, the left environment $\bold{H}^{\rm L}_{d-1}$). The computational cost of this step, i.e., the cost of constructing the environments $\bold{H}_{j-1}^{\rm L}, \bold{H}_{j+1}^{\rm R}$ and performing the necessary tensor contractions scales as
\begin{align}\label{eq:scaling_ALS_1}
	{\rm Cost}_{j, \rm Mat-vec }&= {\rm Cost}_{j, \rm Left\; Env }+{\rm Cost}_{j, \rm Right\; Env }+ {\rm Cost}_{j, \rm Contract }\\ \nonumber
	&= \mathcal{O}(j n r^3 R) +\mathcal{O}((d-j) n r^3 R) + \mathcal{O}(n r^3 R).
\end{align}
Here, we denote $n=\max \{n_j\}_{j=1}^d $, $r= \max\bold{r}$ and $R=\max\bold{R}$, and we have assumed (as is the case for state-of-the-art quantum chemistry simulations) that $r > nR$. We also emphasize that these costs are calculated under the assumption that the left environment $\bold{H}_{j-1}^{\rm L}$ is constructed by successively contracting the tensors $\{\bold{G}_\ell\}_{\ell=1}^{j-1}$ from left-to-right while the right environment $\bold{H}_{j+1}^{\rm R}$ is constructed by successively contracting the tensors $\{\bold{G}_\ell\}_{\ell=d}^{j+1}$ from right-to-left.


It follows that the dominant cost in computing the matrix vector product \eqref{eq:mat_vec} is the construction of the left and right environments $\bold{H}^{\rm L}_{j-1}$ and $\bold{H}^{\rm R}_{j+1}$ respectively, which raises the question of whether these objects can be precomputed at the beginning of the half-sweep and updated on the fly after each micro-iteration. Recalling the one-site DMRG Algorithm \ref{alg:ALS} and considering Equations \eqref{eq:env_L}-\eqref{eq:env_R} that define these environments, we see that the answer is yes in the following sense: In the course of a left-to-right half-sweep, as we transition from micro-iteration $j$ to $j+1$, the left environment $H_{j}^{\rm L}$ can be constructed from $\bold{H}_{j-1}^{\rm L}$ simply by contracting with $\bold{A}_j$ and the newly obtained solution $\widetilde{\bold{U}}_{j}$ to the $j^{\rm th}$ one-site DMRG micro-iteration. Consequently, in the course of a left-to-right half-sweep, at any one-site DMRG micro-iteration $j \in \{1, \ldots, d\}$, the construction from scratch of the left-environment $\bold{H}^{\rm L}_{j-1}$ can be replaced with a simple update step of reduced cost. This leads to the following computational scaling of the matrix vector product \eqref{eq:mat_vec}:
\begin{align}\label{eq:scaling_ALS_2}
	{\rm Cost}^{\rm update}_{j, \rm Mat-vec}&= {\rm Cost}^{\rm update}_{j, \rm Left\; Env }+{\rm Cost}_{j, \rm Right\; Env }+ {\rm Cost}_{j, \rm Contract }\\ \nonumber
	&= \mathcal{O}(n r^3 R) +\mathcal{O}((d-j) n r^3 R) + \mathcal{O}(n r^3 R).
\end{align}

Remaining within the framework of a left-to-right half-sweep as before, we see that the situation with the right environment is different since transitioning from micro-iteration $j$ to $j+1$ requires us to compute $\bold{H}^{\rm R}_{j+2}$, which, in view of Equation \eqref{eq:env_R} cannot be obtained by updating $\bold{H}^{\rm R}_{j+1}$. On the other hand, it is also clear from the description of the DMRG algorithm \ref{alg:ALS} that the right environments $\{\bold{H}^{\rm R}_j\}_{j >1}$ do not depend on the solutions $\{\widetilde{\bold{U}}_\ell\}_{\ell < j}$ obtained from the one-site DMRG micro-iterations in the current half-sweep. With this observation, we see that we have essentially two possibilities:

\begin{itemize}
	\item If the maximal rank parameters $r, R$ and tensor order $d$ are sufficiently small and the storage capacity is sufficiently large, then we can precompute all $d$ right-environments $\{\bold{H}^{\rm R}_j\}_{j>1}$ at the beginning of the left-to-right half-sweep. Similar to the updates of the left environments discussed above, the right environment $\bold{H}_{j}^{\rm R}$ can be constructed by updating $\bold{H}_{j-1}^{\rm R}$ at a cost of $\mathcal{O}(n r^3 R)$. Consequently, the total computational cost of this pre-computation step is $\mathcal{O}(dn r^3 R)$. The matrix-vector product $\bold{A}_1^{\rm 1site, mat}\bold{v}_1$ in the first micro-iteration can now be performed simply by contracting $\bold{A}_1$ and $\bold{v}_1$ with the right environment $\bold{H}^{\rm R}_{2}$. Once the first micro-iteration is performed, we define the first left-environment $\bold{H}_1^{\rm L}$ and then adaptively update $\bold{H}_{j}^{\rm L}, ~ j>1$ in the course of the one-site DMRG half-sweep. Since, we can additionally store the left-environments $\{\bold{H}_{j}^{\rm L}\}_{j=1}^{d}$ computed in this manner in memory, and since the role of the left and right environments is inverted when we transition from a left-to-right to a right-to-left half-sweep (see Algorithm \ref{alg:ALS}), the pre-computation of the right environments $\{\bold{H}^{\rm R}_j\}_{j>1}$ has to be done only once at the start of the one-site DMRG algorithm. Consequently, we obtain the following improvement to the computational cost \eqref{eq:scaling_ALS_2} of a matrix-vector product in each one-site DMRG micro-iteration: 
	\begin{align*}
		{\rm Cost}^{\rm update}_{j, \rm Mat-vec}&= {\rm Cost}^{\rm update}_{j, \rm Left\; Env }+{\rm Cost}^{\rm contract}_{j, \rm Right\; Env }+ {\rm Cost}_{j, \rm Contract }\\
		&= \mathcal{O}(n r^3 R).
	\end{align*}
	In this framework therefore, if we denote by $K$ the maximal number of matrix-vector products needed in all one-site DMRG micro-iteration and by $M$ the number of one-site DMRG half-sweeps, we obtain the following computational scaling of the one-site DMRG algorithm:
	\begin{align}\label{eq:scaling_ALS_3}
		{\rm Cost}^{\rm stored\; env}_{\rm ALS, compute}&={\rm Cost}_{\rm precompute} + {\rm Cost}_{\rm sweep}\\ \nonumber
		&=\mathcal{O}(dn r^3 R) + \mathcal{O}(dn r^3 R K M)\\ \nonumber
		&=\mathcal{O}(dn r^3 R K M).
	\end{align}
	The computational cost \eqref{eq:scaling_ALS_3} is supplemented with a storage cost of
	\begin{align}\label{eq:scaling_ALS_4}
		{\rm Cost}^{\rm stored\; env}_{\rm ALS, storage}&={\rm Cost}_{\bold{X}, \rm storage}+ {\rm Cost}_{\bold{A}, \rm storage} + {\rm Cost}_{\rm envs/intermediates, storage}\\ \nonumber
		&=\mathcal{O}(dn r^2)+\mathcal{O}(dn^2R^2)+\mathcal{O}(dr^2R)+\mathcal{O}(nr^2R).
	\end{align}
	Let us remark here that in problems arising from quantum chemistry, we have $n \in \{2, 4\}$ while $R = \mathcal{O}(d^2) < r$ so that the third term above has by far the dominant cost. 
	
	\item Turning now to the case when the parameters $r, R, $ and $d$ are very large and the storage capacity is limited, we see that the $j^{\rm th}$ micro-iteration in a left-to-right half-sweep requires an on-the-fly computation of the right-environment $\bold{H}^{\rm L}_{j+1}$ with cost ${\rm Cost}_{j, \rm Right\; Env }$. In this case, the computational cost \eqref{eq:scaling_ALS_2} cannot be further improved, and if we denote again by $K$ the number of matrix-vector products needed in each one-site DMRG micro-iteration and by $M$ the number of one-site DMRG half-sweeps, we obtain the following computational scaling:
	\begin{align}\label{eq:scaling_ALS_5}
		{\rm Cost}^{\rm construct\; env}_{\rm 1site, compute}&=  {\rm Cost}^{\rm update}_{\rm Env}+{\rm Cost}^{\rm construct}_{\rm Env}+ {\rm Cost}_{\rm Contract}\\ \nonumber
		&= \mathcal{O}(d n r^3 RM) +\mathcal{O}(d^2 n r^3 RM) + \mathcal{O}(dn r^3 RKM)\\ \nonumber
		&=\mathcal{O}\big(d n r^3 RM(K+d)\big),
	\end{align}
	where we have assumed that for each one-site DMRG micro-iteration, the (left or right) environment needs to be constructed only once so that the Krylov iterations required for eigenvalue solver necessitate only tensor contractions with the left and right environments. Compared to Estimate \eqref{eq:scaling_ALS_3} for the first framework, the computational cost in the present case scales with an additional factor $d$, i.e., the order of the underlying tensor space. On the positive side, the increased computational cost \eqref{eq:scaling_ALS_5} is supplemented with a lower storage cost of
	\begin{align}\label{eq:scaling_ALS_6}
		{\rm Cost}^{\rm construct\; env}_{\rm 1site, storage}&={\rm Cost}_{\bold{X}, \rm storage}+ {\rm Cost}_{\bold{A}, \rm storage} + {\rm Cost}_{\rm env/intermediates, storage}\\ \nonumber
		&=\mathcal{O}(dn r^2)+\mathcal{O}(dn^2R^2)+\mathcal{O}(r^2R)+\mathcal{O}(nr^2R).
	\end{align}
\end{itemize}

Several remarks are now in order. First, note that in the above calculations, we have not explicitly taken into account the computational cost of performing the QR decomposition after each one-site DMRG micro-iteration. Since this cost scales as $\mathcal{O}(nr^3)$ per micro-iteration, it is dominated by the cost of the other steps in the one-site DMRG algorithm. Second, we emphasize that the same considerations (with some obvious modifications) apply in the case of the two-site DMRG algorithm. In particular, if all $d$ left/right environments are not being stored due to memory constraints, then we obtain the following computational scaling of the two-site DMRG algorithm:
\begin{align}\label{eq:scaling_ALS_7}
	{\rm Cost}^{\rm construct\; env}_{\rm 2site, compute}&=\mathcal{O}\big(d n r^3 RM(nK+d)\big)\\
	{\rm Cost}^{\rm construct\; env}_{\rm 2site, storage}&=\mathcal{O}(dn r^2)+\mathcal{O}(dn^2R^2)+\mathcal{O}(r^2R)+\mathcal{O}(n^2r^2R). \nonumber
\end{align}
Finally, let us mention that the estimates presented here assume that the the components of the underlying tensor train decompositions are dense. As remarked in Section \ref{sec:3.2}, for problems arising from quantum chemistry and electronic structure calculations, this is not necessarily the case. Essentially, various conserved quantities in the physical system (such as the number of particles and the total spin of the system) lead to a \emph{block-spare} structure in the components of the tensor train decompositions of both $\bold{A}$ and the sought-after minimizer $\bold{X} \in \mathcal{T}_{\bold{r}}$. We refer to \cite{bachmayr2022particle, badreddine2024leveraging} for further details about these sparsity structures. 

We end this subsection with a short discussion on the computational costs of the one-site and two-site DMRG algorithms in a multi-processor setting. Several strategies to parallelize various steps in these algorithms (such as the tensor contractions) have been proposed in the physics literature (see, e.g., \cite{brabec2021massively, zhai2021low, stoudenmire2013real, levy2020distributed, menczer2024parallel}). However, with the exception of the work of White and Stoudenmire \cite{stoudenmire2013real}, these parallelization strategies are all based on so-called pipeline parallelism in the sense that they focus on parallelism within the various steps comprising a micro-iteration. To our knowledge the work of White and Stoudenmire \cite{stoudenmire2013real} is the only attempt at parallelism within the left/right half-sweeps by fundamentally altering the structure of the two-site DMRG algorithm.

\subsection{Computational Scaling of the A2DMRG \Cref{alg:PALS}}\label{sec:3.3.2}~

\vspace{3mm}
\textbf{\small The One-site A2DMRG Algorithm}~

As emphasized in Section \ref{sec:3.1}, each global iteration of the A2DMRG \Cref{alg:PALS} essentially consists of four steps, the first of which is a sequence of LQ decompositions designed to obtain a $j$-orthogonal copy of the initial tensor train decomposition $\bold{U}^{(n), d}$ for each $j \in \{d, d-1, \ldots, 1\}$. The total computational cost of this step is therefore
\begin{align*}
	{\rm Cost}^{\rm 1site}_{\rm Step\;1}= \mathcal{O}(dnr^3). 
\end{align*}

The next step in the A2DMRG \Cref{alg:PALS} consists of performing $d$ one-site DMRG micro-iterations. As discussed in detail in Section \ref{sec:3.3.1} above, each one-site DMRG micro-iteration in the present setting requires the construction of the left and right environments. Note that this is somewhat different from the classical one-site DMRG algorithm in which, during a left-to-right half-sweep for instance, only the right environment needs to be constructed from scratch at each micro-iteration with the left environment being obtained by a cheaper update step. Regardless, assuming that a maximum of $K\in \mathbb{N}$ matrix-vector products are required for each one-site DMRG micro-iterations, the total cost of this step of the one-site A2DMRG algorithm is (recall Estimate \eqref{eq:scaling_ALS_5})
\begin{align*}
	{\rm Cost}^{\rm 1site}_{\rm Step\;2}= \mathcal{O}\big(d n r^3 R(K+d)\big).
\end{align*}

Turning now to the third step, i.e., the second-level minimization in the A2DMRG \Cref{alg:PALS}, we see that we are required to compute the lowest eigenvalue of the symmetric eigenvalue problem \eqref{eq:aux_eig_final_sym}. To do so, we must first compute the singular value decomposition of the mass matrix $\widehat{\bold{S}}\in \mathbb{R}^{d+1\times d+1}$ defined as
\begin{align*}
	\forall i, j, \in \{1, \ldots, d+1\}\colon \qquad   [\widehat{\bold{S}}]_{i,j} = \big\langle \tau(\widetilde{\bold{U}}^{(n+1), {i-1}}), \tau(\widetilde{\bold{U}}^{(n+1), {j-1}}) \big\rangle,
\end{align*}
where $\widetilde{\bold{U}}^{(n+1), \ell}, \ell \in \{1, \ldots, d\}$ is the tensor train decomposition obtained by performing a one-site DMRG micro-iteration to update the $\ell^{\rm th}$ component of the previous global iterate $\widetilde{\bold{U}}^{(n+1), 0}$. It follows that each entry of the mass matrix $\widehat{\bold{S}}$ requires computing an inner product between two tensors possessing TT decompositions and therefore carries a computational cost of $\mathcal{O}(dnr^3)$. Consequently, the total cost of constructing the symmetric matrix $\widehat{\bold{S}}$ and computing its singular value decomposition scales as
\begin{align*}
	{\rm Cost}_{\bold{S}, \rm SVD}= \mathcal{O}\Big(\frac{d(d+1)}{2}dnr^3\Big) + \mathcal{O}(d^3).
\end{align*}
Note that although this cost, a priori, scales cubically in the tensor order $d$, it does \emph{not} involve contractions with the underlying tensor operator $\bold{A}$. Consequently, if the separation ranks of $\bold{A}$ scale (at least) quadratically with $d$ (which is the case for the second-quantized Hamiltonian in quantum chemistry \cite[Section 3.2]{holtz2012alternating}), then the total cost of this step is comparable with the cost of constructing a left or right environment in the course of the DMRG micro-iterations described above.

Next, let us consider the matrix $\widehat{\bold{A}}\in \mathbb{R}^{d+1\times d+1}$ appearing in the eigenvalue problem \eqref{eq:aux_eig_final_sym}, and defined as
\begin{align*}
	\forall i, j, \in \{1, \ldots, d+1\}\colon \qquad [\widehat{\bold{A}}]_{i,j} = \big\langle \tau(\widetilde{\bold{U}}^{(n+1), {i-1}}), \bold{A}\tau(\widetilde{\bold{U}}^{(n+1), {j-1}}) \big\rangle.
\end{align*}

There are now essentially two possibilities:
\begin{itemize}
	\item We can appeal to a direct computation of $\widehat{\bold{A}}$. Following the same arguments presented in Section \ref{sec:3.3.1}, computing each entry of $\widehat{\bold{A}}$ carries a computational cost of $\mathcal{O}(dnr^3R)$ leading to a total matrix construction cost of 
	\begin{align*}
		{\rm Cost}_{\bold{A}}= \mathcal{O}\Big(\frac{d(d+1)}{2}dnRr^3\Big).
	\end{align*}
	We can then obtain lowest eigenvalue of the symmetric eigenvalue problem \eqref{eq:aux_eig_final_sym} using a direct solver with a total cost of
	\begin{align*}
		{\rm Cost}^{\rm 1site}_{\rm Step\;3, direct}= \mathcal{O}\Big(\frac{d(d+1)}{2}dnr^3\Big) + \mathcal{O}\Big(\frac{d(d+1)}{2}dnRr^3\Big) + \mathcal{O}(d^3).
	\end{align*}
	The main computational bottleneck in this approach is, of course, the construction cost of the matrix $\widehat{\bold{A}}$ which scales cubically in the tensor order $d$. If, therefore, $d$ is sufficiently large, then this cost will dominate the cost the DMRG micro-iterations in the second step of the A2DMRG \Cref{alg:PALS}. It is nevertheless important to note that this matrix construction can easily be performed in parallel provided that sufficient independent processors are available. In particular, if we have at our disposal $d(d+1)/2$ independent processors, then each processor needs to compute only a single order $d$ tensor contraction so that the cost per processor scales linearly in $d$.
	
	\item An alternative strategy is to eschew the explicit construction of $\widehat{\bold{A}}$ and make use of a Krylov solver to compute the lowest eigenvalue of the eigenvalue problem \eqref{eq:aux_eig_final_sym}. To do so, it suffices to consider the calculation of matrix vector products of the form
	\begin{align*}
		\mathbb{R}^{p}\ni \bold{z}=(\widehat{\bold{\Sigma}}_{\bold{S}}^+)^{-1/2}(\widehat{\bold{V}}_{\bold{S}}^+)^*\widehat{\bold{A}} \widehat{\bold{V}}_{\bold{S}}^+(\widehat{\bold{\Sigma}}_{\bold{S}}^+)^{-1/2} \bold{y} \qquad \text{for arbitrary vectors } \bold{y} \in \mathbb{R}^{p}.
	\end{align*}
	Here, $p \in \mathbb{N}$ denotes the numerical rank of the mass matrix $\widehat{\bold{S}}$, and the diagonal matrix $\widehat{\bold{\Sigma}}_{\bold{S}}^+ \in \mathbb{R}^{p \times p}$ and left-orthogonal matrix $\widehat{\bold{V}}_{\bold{S}}^+ \in \mathbb{R}^{d+1 \times p}$ are obtained from the singular value decomposition of $\widehat{\bold{S}}$ (see Section \ref{sec:3.1}).
	
	Notice now that for any input vector $\bold{y}$, the $i^{\rm th}$ entry of the output vector $\bold{z}$ is given by
	\begin{align}\nonumber
		\bold{z}_i=&\sum_{j=1}^{d+1} \sum_{k=1}^{d+1} \sum_{\ell=1}^p [\widehat{\bold{\Sigma}}_{\bold{S}}^+]_{i,i}^{-1/2}[(\widehat{\bold{V}}_{\bold{S}}^+)^*]_{i, j}[\widehat{\bold{A}}]_{j, k} [\widehat{\bold{V}}_{\bold{S}}^+]_{k, \ell}[\widehat{\bold{\Sigma}}_{\bold{S}}^+]_{\ell, \ell}^{-1/2} \bold{y}_{\ell}\\ \nonumber
		=&\sum_{j=1}^{d+1} \sum_{k=1}^{d+1} \sum_{\ell=1}^p [\widehat{\bold{\Sigma}}_{\bold{S}}^+]_{i,i}^{-1/2}[(\widehat{\bold{V}}_{\bold{S}}^+)^*]_{i, j}\left\langle \tau(\widetilde{\bold{U}}^{(n+1), {j-1}}), \bold{A}\tau(\widetilde{\bold{U}}^{(n+1), {k-1}})\right\rangle [\widehat{\bold{V}}_{\bold{S}}^+]_{k, \ell}[\widehat{\bold{\Sigma}}_{\bold{S}}^+]_{\ell, \ell}^{-1/2} \bold{y}_{\ell}\\ \nonumber
		=&\left \langle \sum_{j=1}^{d+1}  [\widehat{\bold{\Sigma}}_{\bold{S}}^+]_{i,i}^{-1/2}[(\widehat{\bold{V}}_{\bold{S}}^+)^*]_{i, j} \tau(\widetilde{\bold{U}}^{(n+1), {j-1}}), \bold{A} \sum_{k=1}^{d+1} \sum_{\ell=1}^p  [\widehat{\bold{V}}_{\bold{S}}^+]_{k, \ell}[\widehat{\bold{\Sigma}}_{\bold{S}}^+]_{\ell, \ell}^{-1/2} \bold{y}_{\ell} \tau(\widetilde{\bold{U}}^{(n+1), {k-1}})\right\rangle\\ \label{eq:aux_mat-vec_prod}
		=&\left \langle \sum_{j=1}^{d+1}  {\bold{t}}^{(i)}_j\tau(\widetilde{\bold{U}}^{(n+1), {j-1}}), \bold{A} \sum_{k=1}^{d+1} \bold{w}_k \tau(\widetilde{\bold{U}}^{(n+1), {k-1}})\right\rangle,
	\end{align}
	where we have introduced the vectors $\bold{t}^{(i)}, \bold{w} \in \mathbb{R}^{d+1}$ given by
	\begin{align*}
		\forall j \in \{1, \ldots, d+1\}\colon \hspace{1mm}    \bold{t}^{(i)}_j =[\widehat{\bold{\Sigma}}_{\bold{S}}^+]_{i,i}^{-1/2}[(\widehat{\bold{V}}_{\bold{S}}^+)^*]_{i, j}, \hspace{1mm} \bold{w}_j = \sum_{\ell=1}^p  [\widehat{\bold{V}}_{\bold{S}}^+]_{k, \ell}[\widehat{\bold{\Sigma}}_{\bold{S}}^+]_{\ell, \ell}^{-1/2} \bold{y}_{\ell}. 
	\end{align*}
	
	In other words, the $i^{\rm th}$ entry of the output vector $\bold{z}$ can be expressed in terms of an inner product involving $\bold{A}$ and two different linear combinations of the tensor train decompositions $\{\tau(\widetilde{\bold{U}}^{(n+1), j})\}_{j=0}^d$. It is well known that a linear combination of $d$ tensor train decompositions with TT ranks bounded by $r$ can itself be expressed as a tensor train decomposition with TT ranks bounded by $dr$. We claim that in fact, for any vectors $\bold{t}^{(i)}, \bold{w} \in \mathbb{R}^d$, each of the linear combinations
	\begin{align*}
		\sum_{j=1}^{d+1}  {\bold{t}}_j\tau(\widetilde{\bold{U}}^{(n+1), {j-1}}), \quad \sum_{k=1}^{d+1} \bold{w}_k \tau(\widetilde{\bold{U}}^{(n+1), {k-1}}),
	\end{align*}
	possesses a tensor train decomposition with TT ranks bounded by~$2r$. To see this, recall that, by construction, each $\widetilde{\bold{U}}^{(n+1), j}, ~j\in \{1, \ldots, d\}$ is of the form
	\begin{align*}
		\widetilde{\bold{U}}^{(n+1), j}= \big(\bold{U}_{1}^{(n)}, \ldots \bold{U}_{j-1}^{(n)}, \delta \bold{U}_{j}^{(n+1)}, \bold{V}_{j+1}^{(n)}, \ldots \bold{V}_{d}^{(n)}\big),
	\end{align*}
	where the component $\delta \bold{U}_{j}^{(n+1)} \in \mathbb{R}^{r_{j-1}\times n_j \times r_j}$ has been obtained through the $j^{\rm th}$ one-site DMRG micro-iteration applied to the initial $j$-orthogonal tensor train iterate $\tau({\bold{U}}^{(n), j})$. Since $\tau({\bold{U}}^{(n+1), 0})=\tau({\bold{U}}^{(n), j})$ for all $j \in \{1, \ldots, d\}$, we can, e.g., also take the right-orthogonal tensor train decomposition of $\tau({\bold{U}}^{(n+1), 0})$, i.e., 
	\begin{align*}
		\tau({\bold{U}}^{(n+1), 0})= \tau\big({\delta W_1}^{(n)}, \bold{V}_2^{(n)} \ldots \bold{V}_{d}^{(n)}\big).
	\end{align*}
	It follows from a direct calculation that for all $x_\ell \in \{1, \ldots, n_\ell\}, ~ \ell=1, \ldots, d$ we can write
	\begin{align}\label{eq:reparameter_0}
		\Big(\sum_{j=1}^{d+1} \bold{t}^{(i)}_j\tau(\widetilde{\bold{U}}^{(n+1), {j-1}})\Big)(x_1,\ldots,x_d)&= \bold{W}_1(x_1)\bold{W}_2(x_2)\ldots \bold{W}_d(x_d),
	\end{align}
	where
	\begin{align}\label{eq:reparameter}
		\begin{split}
			\bold{W}_1(x_1)&= \begin{bmatrix}
				\bold{t}_1^{(i)}{\delta W_1}^{(n)}(x_1)+\bold{t}_2^{(i)}\delta \bold{U}^{(n+1)}_1(x_1) &\bold{U}_1^{(n)}(x_1)
			\end{bmatrix} \in \mathbb{R}^{r_0 \times 2r_1},\\[0.5em]
			\bold{W}_\ell(x_\ell)&=\begin{bmatrix}
				\bold{V}^{(n+1)}_\ell(x_\ell) & 0\\
				\bold{t}_{\ell+1}^{(i)} \delta \bold{U}^{(n+1)}_\ell(x_\ell) & \bold{U}^{(n+1)}_\ell(x_\ell)\\
			\end{bmatrix} \in \mathbb{R}^{2r_{\ell-1} \times 2r_\ell}\hspace{1mm} \forall \ell \in \{2, \ldots, d-1\},\\
			\bold{W}_d(x_d)&=\begin{bmatrix}
				\bold{V}^{(n+1)}_d(x_d)\\ \bold{t}_{d+1}^{(i)} \delta \bold{U}_d^{(n+1)}(x_d)
			\end{bmatrix} \in \mathbb{R}^{2r_{d-1}\times r_d}.
		\end{split}
	\end{align}
	
	Thus, the tensor $\sum_{j=1}^{d+1}\bold{t}^{(i)}_j\tau(\widetilde{\bold{U}}^{(n+1), {j-1}})$ has a tensor train decomposition $\big(\bold{W}_1, \ldots \bold{W}_d\big)$ with components defined through Equation \eqref{eq:reparameter}.  A similar calculation of course also works when the vector $\bold{t}^{(i)}$ is replaced with~$\bold{w}$.
	
	Returning now to Equation \eqref{eq:aux_mat-vec_prod}, we see that the $i^{\rm th}$ entry $\bold{z}_i$ of our sought-after matrix-vector product can be obtained by performing a tensor contraction of the type described in Section \ref{sec:3.3.1} above. Assuming therefore, that $K'\in \mathbb{N}$ Krylov iterations are required to solve the eigenvalue problem \eqref{eq:aux_eig_final_sym}, we arrive at the following total computational cost of this third step of the one-site A2DMRG algorithm \ref{alg:PALS}:
	\begin{align*}
		{\rm Cost}_{\rm Step\;3, Krylov}^{\rm 1site}=& \mathcal{O}\big(d^2 nr^3RK'\big) +  {\rm Cost}_{\bold{S}, \rm SVD}\\
		=& \mathcal{O}\big(d^2 nr^3RK'\big)+ \mathcal{O}(d^3nr^3 +d^3).
	\end{align*}
	In other words, the use of a Krylov solver allows the reduction of the $d^3$ scaling cost in the construction of $\widehat{\bold{A}}$ to a $d^2$ scaling cost. This cost can be further reduced to linear scaling in $d$ per processor provided that $d$ independent processors are available.
\end{itemize}

It now remains to consider the computational cost of the fourth and final step in the one-site A2DMRG algorithm, namely the TT rounding step. To do so, we simply appeal once again to Equations \eqref{eq:reparameter_0}-\eqref{eq:reparameter} to deduce that any linear combination $\sum_{j=1}^d  {\bold{d}}_j\tau(\widetilde{\bold{Y}}_j^{(n+1)})$ of tensor possesses a tensor train decompositions with TT ranks bounded by $2r$. The TT rounding algorithm of Oseledets \cite{oseledets2011tensor} therefore requires a total computational cost
\begin{align*}
	{\rm Cost}_{\rm Step\;4}^{\rm 1site}=\mathcal{O}(dnr^3),
\end{align*}
and we thus conclude that each global iteration of the  A2DMRG Algorithm \ref{alg:PALS}-- assuming the use of a Krylov solver in the third step-- has computational cost
\begin{align}\nonumber
	{\rm Cost}_{\rm 1site, compute}^{\rm A2DRMG}&={\rm Cost}^{\rm 1site}_{\rm Step\;1}+ {\rm Cost}^{\rm 1site}_{\rm Step\;2} + {\rm Cost}^{\rm 1site}_{\rm Step\;3, Krylov} + {\rm Cost}^{\rm 1site}_{\rm Step\;4}\\  \label{eq:PALS_cost_1}
	&= \mathcal{O}\big(d n r^3 R(K+d)\big)+ \mathcal{O}\big(d^2 nr^3RK'\big)+\mathcal{O}(d^3nr^3).
\end{align}

Before proceeding to a study of the two-site A2DMRG \Cref{alg:PALS}, let us conclude this discussion by emphasizing that although the computational costs of the one-site DMRG and one-site A2DMRG algorithms are ostensibly similar, A2DMRG is well-suited for parallelization. Indeed, assuming that we have at hand $d$  processors (or $d(d+1)/2$ processors if the matrices in the second-level minimization are constructed explicitly), we can distribute the workload of the computationally intensive portion of the A2DMRG Algorithm \ref{alg:PALS}, namely, steps two and three, to these processors and arrive at the following cost per processor for a single iteration:
\begin{align}
	{\rm Cost}_{\rm 1site, compute}^{\rm A2DRMG, pp}=  \mathcal{O}\big(n r^3 R(K+d)\big)+ \mathcal{O}\big(d nr^3RK'\big)+\mathcal{O}(d^2nr^3).
\end{align}
In the specific case of quantum chemistry applications, i.e., when the energy functional $\mathcal{J}$ to be minimized is of the form \eqref{eq:energy_eig}
\begin{align*}
	\forall \bold{X}\in  U=\mathbb{R}^{n_1 \times n_2 \times \ldots n_d}\setminus \{0\}\colon \qquad        \mathcal{J}(\bold{X})= \frac{\langle \bold{X}, \bold{H}\bold{U}\rangle}{\Vert \bold{X}\Vert^2},
\end{align*}
with $\bold{H}\colon \mathbb{R}^{n_1 \times n_2 \times \ldots n_d} \rightarrow \mathbb{R}^{n_1 \times n_2 \times \ldots n_d}$ denoting a second-quantized molecular Hamiltonian \cite[Section 3.2]{holtz2012alternating}, it is well-known that $R = \mathcal{O}(d^2)$. Consequently, the cost per processor for a single iteration scales in this case as
\begin{align}
	{\rm Cost}_{\rm 1site, compute}^{\rm A2DRMG, pp}=  \mathcal{O}\big(n r^3 R(K+d)\big)+ \mathcal{O}\big(d nr^3RK'\big).
\end{align}

	
	
	

\noindent \textbf{\small The Two-site A2DMRG Algorithm}~

Let us now consider the computational cost of the two-site A2DMRG algorithm. Mimicking the same arguments as those used to study the one-site DMRG case, we see that the computational cost of the first two steps of Algorithm \eqref{alg:PALS} are given by
\begin{align*}
	{\rm Cost}^{\rm 2site}_{\rm Step\;1}  +    {\rm Cost}^{\rm 2site}_{\rm Step\;2}= \mathcal{O}\big(d n r^3\big)+\mathcal{O}\big(d n r^3 R(nK+d)\big).
\end{align*}

The third step in the two-site A2DMRG algorithm requires the computation of the lowest eigenvalue of the symmetric eigenvalue problem \eqref{eq:aux_eig_final_sym}. As in the one-site DMRG algorithm case, this can either be done using an explicit construction of the underlying matrices $\widehat{\bold{S}}\in \mathbb{R}^{d\times d}$ and $\widehat{\bold{A}}$ or by relying on a Krylov solver. To evaluate the cost of the latter approach, it suffices to check that a factorization of the form \eqref{eq:reparameter_0}-\eqref{eq:reparameter} can also be obtained for linear combinations of tensor train decompositions obtained from the second step of the two-site A2DMRG Algorithm~\ref{alg:PALS}. We claim that this is indeed the case. Indeed, consider a linear combination of tensor of the form
\begin{align*}
	\sum_{j=1}^{d}  {\bold{y}}_j\tau(\widetilde{\bold{U}}^{(n+1), {j-1}}),
\end{align*}
where each ${\bold{y}}_j \in \mathbb{R}$, and each tensor train decomposition $\widetilde{\bold{U}}^{(n+1), j}, ~j\in \{1, \ldots, d-1\}$ is obtained as the solution to the $j^{\rm th}$ two-site DMRG micro-step in accordance with step two of the A2DMRG \Cref{alg:PALS}. By construction, each TT decomposition $\widetilde{\bold{U}}^{(n+1), j}, ~j \in \{1, \ldots, d-1\}$ takes the form
\begin{align*}
	\widetilde{\bold{U}}^{(n+1), j}= \big(\bold{U}_{1}^{(n)}, \ldots \bold{U}_{j-1}^{(n)}, \delta \bold{U}_{j}^{(n+1)}, \delta \bold{U}_{j+1}^{(n+1)}, \bold{V}_{j+2}^{(n)}, \ldots \bold{V}_{d}^{(n)}\big)
\end{align*}
where the components $\delta \bold{U}_{j}^{(n+1)} \in \mathbb{R}^{r_{j-1}\times n_j \times \widetilde{r}_j} $ and $ \delta \bold{U}_{j+1}^{(n+1)} \in \mathbb{R}^{\widetilde{r}_{j}\times n_j \times r_{j+1}}$ have been obtained through the $j^{\rm th}$ two-site DMRG micro-iteration applied to the initial $j$-orthogonal tensor train iterate $\bold{U}^{(n), j}$ followed by a truncated SVD step. Since $\tau({\bold{U}}^{(n+1), 0})=\tau({\bold{U}}^{(n), j})$ for all $j \in \{1, \ldots, d-1\}$, we can once again take the right-orthogonal tensor train decomposition of $\tau({\bold{U}}^{(n+1), 0})$, i.e., 
\begin{align*}
	\tau({\bold{U}}^{(n+1), 0})= \tau\big({\delta W_1}^{(n)}, \bold{V}_2^{(n)} \ldots \bold{V}_{d}^{(n)}\big).
\end{align*}
It now follows from a direct calculation that for all $x_\ell \in \{1, \ldots, n_\ell\}, ~ \ell=1, \ldots, d$ we can write
\begin{align}\label{eq:reparameter_0_MALS}
	\Big(\sum_{j=1}^d \bold{y}_j\tau(\widetilde{\bold{U}}^{(n+1), {j-1}})\Big)(x_1,\ldots,x_d)&= \bold{W}_1(x_1, x_2)\bold{W}_2(x_2, x_3)\ldots \bold{W}_{d-1}(x_{d-1}, x_d),
\end{align}
where
\begin{align}\label{eq:reparameter_MALS}
	\begin{split}
		\bold{W}_1(x_1, x_2)&= \begin{bmatrix}
			\bold{y}_2\delta \bold{U}^{(n+1)}_1(x_1)\delta \bold{U}^{(n+1)}_2(x_2) + \bold{y}_1\delta \bold{W}^{(n)}_1(x_1)\bold{V}^{(n)}_2(x_2) &\bold{U}_1^{(n)}(x_1)
		\end{bmatrix} \in \mathbb{R}^{r_0 \times (r_2+r_1)},\\[1em]
		\bold{W}_\ell(x_\ell, x_{\ell+1})&=\begin{bmatrix}
			\bold{V}^{(n+1)}_{\ell+1}(x_{\ell+1}) & 0\\[0.4em]
			\bold{y}_{\ell+1} \delta \bold{U}^{(n+1)}_\ell(x_\ell)\delta \bold{U}^{(n+1)}_{\ell+1}(x_{\ell+1}) & \bold{U}^{(n+1)}_\ell(x_\ell)\\
		\end{bmatrix} \in \mathbb{R}^{(r_{\ell}+r_{\ell-1})\times (r_{\ell+1}+r_{\ell})},\\ \noindent
		&\hspace{7.5cm}\forall \ell \in \{2, \ldots, d-2\},\\[1em]
		\bold{W}_{d-1}(x_{d-1}, x_d)&=\begin{bmatrix}
			\bold{V}^{(n+1)}_d(x_d)\\[0.4em] \bold{y}_d \delta \bold{U}_{d-1}^{(n+1)}(x_{d-1})\delta \bold{U}_d^{(n+1)}(x_d)
		\end{bmatrix} \in \mathbb{R}^{(r_{d-1}+ r_{d-2})\times r_d}.
	\end{split}
\end{align}

Although Equation \eqref{eq:reparameter_0_MALS} does not correspond to a tensor train decomposition of the tensor $\sum_{j=1}^d \bold{y}_j\tau(\widetilde{\bold{U}}^{(n+1), {j-1}})$, the structure of this factorization closely resembles a tensor train decomposition, and we can therefore use the usual TT-type tensor contractions to efficiently compute inner products involving tensors of the form $\sum_{j=1}^d \bold{y}_j\tau(\widetilde{\bold{U}}^{(n+1), {j-1}})$. In particular, the computational cost of calculating an inner-product of the form (c.f., Equation \eqref{eq:aux_mat-vec_prod})
\begin{align*}
	\left \langle \sum_{j=1}^d  {\bold{t}}^{(i)}_j\widetilde{\bold{U}}^{(n+1), {j-1}}, \bold{A} \sum_{k=1}^d \bold{w}_k \widetilde{\bold{U}}^{(n+1), {k-1}}\right\rangle,
\end{align*}
for arbitrary vectors $\bold{t}^{(i)}, \bold{w} \in \mathbb{R}^d$ scales as $\mathcal{O}(d^2n^3r^3R + dn^2\widetilde{r}r^2)$, where we have introduced $\widetilde{r}= \max\{\widetilde{r}_j\}_{j=1}^{d-1}$. Assuming therefore, that $K'\in \mathbb{N}$ Krylov iterations are required to solve the eigenvalue problem \eqref{eq:aux_eig_final_sym}, we arrive at the following total computational cost of this third step of the two-site A2DMRG Algorithm \ref{alg:PALS}:
\begin{align*}
	{\rm Cost}_{\rm Step\;3, Krylov}^{\rm 2site}=& \mathcal{O}\big(d^2 n^3r^3RK'+ dn^2\widetilde{r}r^2\big) +  {\rm Cost}_{\bold{S}, \rm SVD}\\
	=& \mathcal{O}\big(d^2 n^3r^3RK'+dn^2\widetilde{r}r^2K'\big)+ \mathcal{O}(d^3n\widetilde{r}^3 +d^3).
\end{align*}

It remains to consider the computational cost of the fourth and final step in the two-site A2DMRG algorithm, namely the TT rounding step. In contrast to the one-site DMRG case, an additional complication now arises, namely, that the factorization \eqref{eq:reparameter_0_MALS}-\eqref{eq:reparameter_MALS} of a linear combination of tensor train decompositions does \underline{not} itself represent a tensor train decomposition of ranks bounded by $2\bold{r}$. Consequently, we can no longer appeal directly to the TT rounding algorithm of Oseledets \cite{oseledets2011tensor}. Instead, we propose to rephrase the TT rounding step as a best-approximation problem on the manifold $\mathcal{T}_{\widetilde{r}}\subset \mathbb{R}^{n_1\times \ldots \times n_d}$ with ranks $(\widetilde{r}_1, \ldots, \widetilde{r}_{d-1})$. In other words, given the solution $\sum_{j=0}^d  {\bold{c}}_j^*\tau(\widetilde{\bold{U}}^{(n+1), {j}})$ to the eigenvalue problem \eqref{eq:aux_eig_final_sym}, we consider the minimization problem
\begin{align}\label{eq:MALS_rounding}
	\underset{\bold{X}\in \mathcal{T}_{\widetilde{\bold{r}}}}{\text{argmin}} \frac{1}{2}\Vert \sum_{j=0}^d  {\bold{c}}_j^*\tau(\widetilde{\bold{U}}^{(n+1), j})-\bold{X}\Vert^2= \underset{\bold{X}\in \mathcal{T}_{\widetilde{\bold{r}}}}{\text{argmin}}  \frac{1}{2} \langle \bold{X}, \bold{X}\rangle - \Big\langle \sum_{j=0}^d  {\bold{c}}_j^*\tau(\widetilde{\bold{U}}^{(n+1), j}), \bold{X}\Big\rangle.
\end{align}
Equation \eqref{eq:MALS_rounding} can, for instance, be solved using the one-site A2DMRG \Cref{alg:PALS} together with the factorization \eqref{eq:reparameter_0_MALS}-\eqref{eq:reparameter_MALS}. Recalling the computational cost of each iteration of the one-site A2DMRG algorithm \Cref{alg:PALS} given by Equation \eqref{eq:PALS_cost_1}, and denoting by $M'\in \mathbb{N}$ the total number of global iterations required for convergence up to a tolerance, this step scales as
\begin{align}\label{eq:new_amend}
	{\rm Cost}_{\rm Step\;4}^{\rm 2site}=\mathcal{O}\big(d n \widetilde{r}^3 (K_{\rm 1Site}+d)M'\big)+\mathcal{O}\big(d^2 n \widetilde{r}^3 K_{\rm 1Site}'M'\big)+\mathcal{O}(M' d^3),
\end{align}
where $K_{\rm 1Site}, K_{\rm 1Site}'$ denote the number of Krylov solver iterations required for the one-site DMRG micro-iterations and the solution to the second-level minimization problem \eqref{eq:aux_eig_final_sym} respectively. We emphasize here that the application of the one-site A2DMRG algorithm to the best-approximation problem \eqref{eq:MALS_rounding} requires the solution of a linear system of equations, both for the micro-iterations and the second-level minimization. Consequently, the computational cost ${\rm Cost}_{\rm Step\;4}^{\rm 2site}$ given by Equation \eqref{eq:new_amend} differs slightly from the computational cost of one-site A2DMRG calculated earlier (see Equation \eqref{eq:PALS_cost_1}).

We thus conclude that each global iteration of the A2DMRG Algorithm \ref{alg:PALS} has computational cost
\begin{align}\nonumber
	{\rm Cost}_{\rm 2site, compute}^{\rm A2DRMG}&={\rm Cost}^{\rm 2site}_{\rm Step\;1}+ {\rm Cost}^{\rm 2site}_{\rm Step\;2} + {\rm Cost}^{\rm 2site}_{\rm Step\;3, Krylov} + {\rm Cost}^{\rm 2site}_{\rm Step\;4}\\ \nonumber
	&=\mathcal{O}\big(d n r^3 R(Kn+d)\big) + \mathcal{O}\big(d^2 n^3r^3RK'+dn^2\widetilde{r}r^2K'\big)+\mathcal{O}(d^3n\widetilde{r}^3)\\
	&+\mathcal{O}\big(d n \widetilde{r}^3 (K_{\rm 1Site}+d)M'\big)+\mathcal{O}\big(d^2 n \widetilde{r}^3 K_{\rm 1Site}'M'\big). \label{eq:PALS_cost_2}
\end{align}

We conclude this section by emphasizing that-- similar to the one-site DMRG case-- the computational costs of the two-site DMRG and two-site A2DMRG algorithms have similar scaling. However, the two-site A2DMRG \Cref{alg:PALS} is well-suited for parallelization. Indeed, assuming that we have at hand $d$ processors (or $d(d+1)/2$ processors if the matrices in the second-level minimization are constructed explicitly), the cost per processor for a single iteration of the two-site A2DMRG \Cref{alg:PALS} is given by:
\begin{align*}
	{\rm Cost}_{\rm 2site, compute}^{\rm A2DRMG, pp}&=  \mathcal{O}\big(n r^3 R(Kn+d)\big) + \mathcal{O}\big(d n^3r^3RK'+n^2\widetilde{r}r^2K'\big)+\mathcal{O}(d^2n\widetilde{r}^3)\\
	&+\mathcal{O}\big(n \widetilde{r}^3 (K_{\rm 1Site}+d)M'\big)+\mathcal{O}\big(d n \widetilde{r}^3 K_{\rm 1Site}'M'\big).
\end{align*}
Let us remark that throughout our testing, $K_{\rm1site}, K_{\rm1site}' \approx \mathcal{O}(1)$ so that the last two terms are dominated in cost by the first two terms.

\end{document}